\documentclass[12pt,english]{article}
\pdfoutput=1
\usepackage[T1]{fontenc}
\usepackage[latin9]{inputenc}
\usepackage{geometry}
\geometry{verbose,tmargin=1in,bmargin=1in,lmargin=1in,rmargin=1in}
\usepackage{amsmath, amsthm, amssymb}
\usepackage{mathtools}
\usepackage{color}
\usepackage{array}
\usepackage{multirow, multicol}
\usepackage{float}
\usepackage{bm}
\usepackage{tikz}
\usetikzlibrary{arrows, shapes}
\usepackage{hyperref}
\usepackage{caption}
\usepackage{microtype}
\usepackage{titlesec}

\providecommand{\tabularnewline}{\\}

\numberwithin{equation}{section}
\theoremstyle{plain}
\newtheorem{thm}{Theorem}[section]
\theoremstyle{plain}
\newtheorem{lem}[thm]{Lemma}
\theoremstyle{plain}
\newtheorem{cor}[thm]{Corollary}
\theoremstyle{plain}
\newtheorem{prop}[thm]{Proposition}
\theoremstyle{plain}
\newtheorem{conjecture}[thm]{Conjecture}
\theoremstyle{definition}
\newtheorem{problem}[thm]{Problem}
\theoremstyle{plain}
\newtheorem{question}[thm]{Question}

\DeclareMathOperator{\Des}{Des}
\DeclareMathOperator{\pk}{pk}
\DeclareMathOperator{\des}{des}
\DeclareMathOperator{\st}{st}
\DeclareMathOperator{\bdes}{bdes}
\DeclareMathOperator{\Bdes}{Bdes}
\DeclareMathOperator{\rbdes}{rbdes}
\DeclareMathOperator{\LRmax}{LRmax}
\DeclareMathOperator{\RLmax}{RLmax}
\DeclareMathOperator{\run}{run}
\DeclareMathOperator{\occ}{occ}

\DeclareMathOperator{\lddes}{lddes}
\DeclareMathOperator{\sdes}{sdes}
\DeclareMathOperator{\fix}{fix}
\DeclareMathOperator{\std}{std}
\DeclareMathOperator{\hibasc}{hibasc}
\DeclareMathOperator{\lobasc}{lobasc}
\DeclareMathOperator{\ini}{ini}
\DeclareMathOperator{\con}{con}
\DeclareMathOperator{\core}{core}

\newcommand{\MM}{\mathcal{M}^{(2)}}

\titleformat{\section}
  {\normalfont\large\bfseries}{\thesection.}{.8ex}{} 
\titleformat{\subsection}
  {\normalfont\bfseries}{\thesubsection.}{.8ex}{} 

\tikzstyle{pathdefault}=[draw, line width=1, solid, color=black]
\tikzstyle{nodedefault}=[circle, inner sep=1.1, fill=black]
\tikzstyle{nodered}=[circle, inner sep=1.1, fill=red]
\tikzstyle{nodeblue}=[circle, inner sep=1.1, fill=blue]
\tikzstyle{empty}=[]
\tikzstyle{nodeellipsis}=[circle, inner sep=0.5, fill=black]
\tikzstyle{pathcolor1}=[draw, line width=1, solid, color=red]
\tikzstyle{pathcolor2}=[draw, line width=1, solid, color=blue]
\tikzstyle{pathcolorlight}=[draw, line width=1, dotted, color=lightgray]
\tikzstyle{arbpathcolor0}=[line width=1, dashdotted, color=black]
\tikzstyle{arbpathcolor1}=[line width=1, densely dashed, color=red]
\tikzstyle{arbpathdefault}=[line width=1, densely dotted, color=blue]

\newcounter{id}
\newcommand{\drawlinedotswithstyle}[4]{
 \def\x{{#3}}
 \def\y{{#4}}
 \tikzstyle{thispathstyle}=[#1]
 \tikzstyle{thisnodestyle}=[#2]
 \setcounter{id}{-1}
 \foreach \j in {#3}{\stepcounter{id}}
 \foreach \i in {1,...,\the\value{id}}{
  \path[thispathstyle] (\x[\i],\y[\i]) --(\x[\i-1],\y[\i-1]);
 }
 \foreach \i in {1,...,\the\value{id}}{
  \node[thisnodestyle] at (\x[\i],\y[\i]) {};
 }
 \node[thisnodestyle] at (\x[0],\y[0]) {};
}

\DeclareDocumentCommand{\drawlinedots}{ O{pathdefault} O{nodedefault} m m}{\drawlinedotswithstyle{#1}{#2}{#3}{#4}}
\DeclareDocumentCommand{\drawlinedotsred}{ O{pathcolor1} O{nodered} m m}{\drawlinedotswithstyle{#1}{#2}{#3}{#4}}
\DeclareDocumentCommand{\drawlinedotsblue}{ O{pathcolor2} O{nodeblue} m m}{\drawlinedotswithstyle{#1}{#2}{#3}{#4}}
\DeclareDocumentCommand{\drawlinedotsredpt}{ O{pathcolor1} O{nodedefault} m m}{\drawlinedotswithstyle{#1}{#2}{#3}{#4}}
\DeclareDocumentCommand{\drawlinedotsbluept}{ O{pathcolor2} O{nodedefault} m m}{\drawlinedotswithstyle{#1}{#2}{#3}{#4}}

\let\originalleft\left
\let\originalright\right
\renewcommand{\left}{\mathopen{}\mathclose\bgroup\originalleft}
\renewcommand{\right}{\aftergroup\egroup\originalright}

\usepackage{authblk}
\title{Counting pattern-avoiding permutations by big descents}
\author[1]{Sergi Elizalde\thanks{\tt{sergi.elizalde@dartmouth.edu}}} 
\author[2]{Johnny Rivera, Jr.\thanks{\tt{jorivera@vt.edu}}} 
\author[3]{Yan Zhuang\thanks{\tt{yazhuang@davidson.edu}}}
\affil[1]{Department of Mathematics, Dartmouth College}
\affil[2]{Department of Mathematics, Virginia Tech} 
\affil[3]{Department of Mathematics and Computer Science, Davidson College}
\date{\today}

\newtheorem*{lemG}{Lemma \ref{lem:G}}
\newtheorem*{lemFG}{Lemma \ref{lem:FG}}
\flushbottom

\begin{document}

\maketitle

\begin{abstract}
A descent $k$ of a permutation $\pi=\pi_{1}\pi_{2}\dots\pi_{n}$ is called a \textit{big descent} if $\pi_{k}>\pi_{k+1}+1$; denote the number of big descents of $\pi$ by $\bdes(\pi)$. We study the distribution of the $\bdes$ statistic over permutations avoiding prescribed sets of length-three patterns. Specifically, we classify all pattern sets $\Pi\subseteq\mathfrak{S}_{3}$ of size 1 and 2 into $\bdes$-Wilf equivalence classes, and we derive a formula for the distribution of big descents for each of these classes. Our methods include generating function techniques along with various bijections involving objects such as Dyck paths and binary words. Several future directions of research are proposed, including conjectures concerning real-rootedness, log-concavity, and Schur positivity.
\end{abstract}
\textbf{\small{}Keywords: }{\small{}permutation patterns, big descents, $\st$-Wilf equivalence, Dyck paths, binary words, Narayana numbers}{\let\thefootnote\relax\footnotetext{2020 \textit{Mathematics Subject Classification}. Primary 05A15; Secondary 05A05, 05A19.}}

\tableofcontents{}

\section{Introduction}

Let $\mathfrak{S}_{n}$ denote the symmetric group of permutations on $[n]\coloneqq\{1,2,\dots,n\}$. We treat permutations as words on distinct letters by writing them in one-line notation; that is, if $\pi\in\mathfrak{S}_{n}$, then we write $\pi=\pi_{1}\pi_{2}\dots\pi_{n}$ where $\pi_{k}=\pi(k)$ for all $k\in[n]$. We take $\mathfrak{S}_{0}\coloneqq\{\varepsilon\}$ where $\varepsilon$ is the empty word. The length of a word $w$ is denoted by $\left|w\right|$; this includes when $w$ is a permutation, so that $\left|\pi\right|=n$ whenever $\pi\in\mathfrak{S}_{n}$.

If $w$ is a word consisting of $n$ distinct positive integers, then we let $\std(w)$ denote the permutation in $\mathfrak{S}_{n}$ obtained by replacing the smallest letter of $w$ by 1, the second smallest by 2, and so on. The permutation $\std(w)$ is called the \textit{standardization} of $w$. For example, the standardization of $5718$ is $\std(5718)=2314$.

An \textit{occurrence} of $\sigma\in\mathfrak{S}_{k}$ in $\pi\in\mathfrak{S}_{n}$ is a subsequence $w$ of $\pi$ whose standardization is $\sigma$. For example, $527$ is an occurrence of $213$ in $1523764$, but $7612543$ contains no occurrences of $213$. We say that $\pi$ \textit{avoids} $\sigma$ (as a \textit{pattern}), or that $\pi$ is \textit{$\sigma$-avoiding}, if $\pi$ contains no occurrences of $\sigma$. Let $\mathfrak{S}_{n}(\sigma)$ denote the subset of permutations in $\mathfrak{S}_{n}$ which avoid $\sigma$. So, as noted above, $7612543\in\mathfrak{S}_{7}(213)$. More generally, given a set $\Pi$ of patterns, we let $\mathfrak{S}_{n}(\Pi)$ denote the subset of permutations in $\mathfrak{S}_{n}$ which avoid every pattern in $\Pi$.\footnote{For ease of notation, given a specific $\Pi$, we will often omit the curly braces enclosing the set $\Pi$ when writing out $\mathfrak{S}_{n}(\Pi)$ and in similar notations.}

\subsection{History and background}

Permutation patterns have received widespread attention since the seminal work of Simion and Schmidt \cite{Simion1985}. An important theme in the enumerative study of permutation patterns is the classification of pattern sets into Wilf equivalence classes. Two pattern sets $\Pi$ and $\Pi^{\prime}$ are called \textit{Wilf equivalent} if $\left|\mathfrak{S}_{n}(\Pi)\right|=\left|\mathfrak{S}_{n}(\Pi^{\prime})\right|$ for all $n\geq0$. For example, it is well known that all length-three singletons belong to the same Wilf equivalence class, whose common enumeration is given by the Catalan numbers. Simon and Schmidt \cite{Simion1985} studied the enumeration of $\mathfrak{S}_{n}(\Pi)$ for all sets $\Pi\subseteq\mathfrak{S}_{3}$.

Compared to the study of permutation patterns, a more classical problem within permutation enumeration is that of counting permutations with respect to permutation statistics. One such statistic is the descent number $\des$. We say that $k\in[n-1]$ is a \textit{descent} of $\pi\in\mathfrak{S}_{n}$ if $\pi_{k}>\pi_{k+1}$, and $\des(\pi)$ is defined to be the number of descents of $\pi$. The study of the descent number dates back to MacMahon \cite{macmahon}, and there is now an immense literature devoted to the \textit{Eulerian polynomials} $A_{n}(t)\coloneqq\sum_{\pi\in\mathfrak{S}_{n}}t^{\des(\pi)}$ and their coefficients, the \textit{Eulerian numbers}. See \cite{Petersen2015} for an introduction.

Given a pattern set $\Pi$ and a permutation statistic $\st$, it is natural to ask for the distribution of $\st$ over $\mathfrak{S}_{n}(\Pi)$. For example, Simion \cite{Simion1994} used a bijection with noncrossing partitions to show that the distribution of $\des$ over $\mathfrak{S}_{n}(132)$ is given by the Narayana numbers. By symmetry arguments, the same distribution is achieved over $\mathfrak{S}_{n}(213)$, $\mathfrak{S}_{n}(231)$, and $\mathfrak{S}_{n}(312)$. More recently, Barnabei, Bonetti, and Silimbani \cite{Barnabei2010} employed a bijection with Dyck paths to derive the generating function 
\begin{equation}
\sum_{n=0}^{\infty}\sum_{\pi\in\mathfrak{S}_{n}(123)}t^{\des(\pi)}x^{n}=\frac{1-2t(1-t)x-2t(1-t)^{2}x^{2}-\sqrt{1-4tx(1+(1-t)x)}}{2t^{2}x(1+(1-t)x)}\label{e-123des}
\end{equation}
for the distribution of $\des$ over $\mathfrak{S}_{n}(123)$. The corresponding generating function for $\mathfrak{S}_{n}(321)$ is readily obtained from \eqref{e-123des}.

The refined enumeration of pattern-avoiding permutations by statistics naturally leads to the notion of $\st$-Wilf equivalence, defined as follows: given a statistic $\st$, two pattern sets $\Pi$ and $\Pi^{\prime}$ are said to be \textit{$\st$-Wilf equivalent} if, for all $n\geq0$, the distribution of $\st$ over $\mathfrak{S}_{n}(\Pi)$ is equal to the distribution of $\st$ over $\mathfrak{S}_{n}(\Pi^{\prime})$. When $\st$ is non-negative integer-valued, this amounts to the equality of polynomials 
\[
\sum_{\pi\in\mathfrak{S}_{n}(\Pi)}t^{\st(\pi)}=\sum_{\pi\in\mathfrak{S}_{n}(\Pi^{\prime})}t^{\st(\pi)}
\]
for all $n\geq0$. As far as we are aware, the idea of refining Wilf equivalence by a permutation statistic first appeared in the work of Robertson, Saracino, and Zeilberger \cite{Robertson2002}, who studied $\fix$-Wilf equivalence where $\fix$ is the number of fixed points. However, the general notion of $\st$-Wilf equivalence was introduced later by Dokos, Dwyer, Johnson, Sagan, and Selsor \cite{Dokos2012}, who classified all $\Pi\subseteq\mathfrak{S}_{3}$ into $\st$-Wilf equivalence classes for the major index and inversion number statistics.

Another permutation statistic\textemdash one which has received considerably less attention than the statistics already mentioned\textemdash is the $r$-descent number. Given a non-negative integer $r$, we say that $k\in[n-1]$ is an \textit{$r$-descent} of $\pi\in\mathfrak{S}_{n}$ if $\pi_{k}>\pi_{k+1}+r$, and we let $\des_{r}(\pi)$ denote the number of $r$-descents of $\pi$. Setting $r=0$ recovers the classical descent number. For example, given $\pi=7421365$, we have $\des_{0}(\pi)=\des(\pi)=4$, $\des_{1}(\pi)=2$, and $\des_{2}(\pi)=1$. Riordan \cite{Riordan1958} and Foata\textendash Sch{\"u}tzenberger \cite{Foata1970} proved a number of formulas for the $r$\textit{-Eulerian polynomials} $^{r\!\!}A_{n}(t)\coloneqq\sum_{\pi\in\mathfrak{S}_{n}}t^{\des_{r}(\pi)}$, some of which can also be derived using a ``relaxed'' version of $P$-partitions due to Petersen and the third author \cite{Petersen}. Moreover, Shareshian and Wachs \cite[Section 9.1]{Shareshian2016} defined a generalized $q$-Eulerian polynomial for incomparability graphs of posets, which agrees with $^{r\!\!}A_{n}(t)$ as a special case. They showed that these generalized $q$-Eulerian polynomials can be obtained as certain specializations of chromatic quasisymmetric functions. Examples of known formulas for the $r$-Eulerian polynomials include the recurrence
\[
^{r\!\!}A_{n}(t)=(r+1+(n-r-1)t)\:{}^{r\!\!}A_{n-1}(t)+t(1-t)\:{}^{r\!\!}A_{n-1}^{\prime}(t)
\]
for all $n\geq r+1$, and the generalized Carlitz identity
\[
\frac{^{r\!\!}A_{n+r}(t)}{(r+1)!\,(1-t)^{n+1+r}}=\sum_{k=0}^{\infty}(k+1+r)^{n-1}{k+1+r \choose r+1}t^{k}.
\]
However, much less seems to be known about the distribution of $r$-descents over restricted families of permutations.

\subsection{Overview of results}

Let us call $k\in[n-1]$ a \textit{big descent} of $\pi\in\mathfrak{S}_{n}$ if $\pi_{k}>\pi_{k+1}+1$\textemdash in other words, if $k$ is a $1$-descent of $\pi$\textemdash and let $\bdes(\pi)\coloneqq\des_{1}(\pi)$ denote the number of big descents of $\pi$. In this paper, we study the distribution of big descents over sets of pattern-avoiding permutations; specifically, permutations avoiding a single pattern of length 3 or two patterns of length 3. We end this introduction by summarizing our main results and outlining the structure of this paper.

The \textit{reverse} $\pi^{r}$ and the \textit{complement} $\pi^{c}$ of a permutation $\pi\in\mathfrak{S}_{n}$ are defined by
\[
\pi^{r}\coloneqq\pi_{n}\pi_{n-1}\dots\pi_{1}\quad\text{and}\quad\pi^{c}\coloneqq(n+1-\pi_{1})\;(n+1-\pi_{2})\;\dots\;(n+1-\pi_{n}),
\]
respectively. In addition, the \textit{reverse-complement} $\pi^{rc}$ of $\pi$ is defined by 
\[
\pi^{rc}\coloneqq(\pi^{r})^{c}=(\pi^{c})^{r}.
\]
For example, if $\pi=1425736$, then we have $\pi^{r}=6375241$, $\pi^{c}=7463152$, and $\pi^{rc}=2513647$. Given a set $\Pi$ of permutations, let
\[
\Pi^{rc}\coloneqq\{\,\pi^{rc}\colon\pi\in\Pi\,\}.
\]
Note that $\pi\in\mathfrak{S}_{n}(\Pi)$ if and only if $\pi^{rc}\in\mathfrak{S}_{n}(\Pi^{rc})$, and that the number of big descents is invariant under reverse-complementation; therefore, $\Pi$ and $\Pi^{rc}$ are $\bdes$-Wilf equivalent for all pattern sets $\Pi$. We refer to such equivalences as \textit{trivial}.

Among our primary contributions is a full $\bdes$-Wilf equivalence classification for all $\Pi\subseteq\mathfrak{S}_{3}$ with $\left|\Pi\right|=1$ or $\left|\Pi\right|=2$. Below, $[\Pi]_{\bdes}$ denotes the $\bdes$-Wilf equivalence class of $\Pi$.

\begin{thm} \label{t-bdesW1}
The following are all $\bdes$-Wilf equivalence classes among single patterns of length 3\textup{:}
\begin{align*}
[231]_{\bdes} & =\{231,312\}, & [132]_{\bdes} & =\{132,213\},\\
[123]_{\bdes} & =\{123\},\quad\text{and} & [321]_{\bdes} & =\{321\}.
\end{align*}
In particular, there are no non-trivial $\bdes$-Wilf equivalences among these patterns.
\end{thm}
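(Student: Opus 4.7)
My plan is to combine the symmetry argument recalled just before the theorem with a small finite verification.

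First, since $\bdes$ is invariant under reverse-complementation and pattern-avoidance is compatible with $rc$, the sets $\Pi$ and $\Pi^{rc}$ are always $\bdes$-Wilf equivalent. A direct computation gives $231^{rc}=312$, $132^{rc}=213$, $123^{rc}=123$, and $321^{rc}=321$, which immediately yields the inclusions
$$\{231,312\}\subseteq[231]_{\bdes},\quad\{132,213\}\subseteq[132]_{\bdes},\quad\{123\}\subseteq[123]_{\bdes},\quad\{321\}\subseteq[321]_{\bdes}.$$
Because these four subsets already partition $\mathfrak{S}_{3}$, every $\bdes$-Wilf equivalence class among single length-$3$ patterns must be a union of some of them; in particular, to prove the theorem it suffices to show that no two of the four can be merged.

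The plan for this second step is a direct finite check at $n=4$. I will enumerate the $14$ members of each $\mathfrak{S}_{4}(\sigma)$ for $\sigma\in\{123,132,231,321\}$, tabulate $\bdes$ on each, and verify that the four resulting polynomials $\sum_{\pi\in\mathfrak{S}_{4}(\sigma)}t^{\bdes(\pi)}$ are pairwise distinct. I expect to need $n$ at least $4$, because at $n=3$ one already has $\sum_{\pi\in\mathfrak{S}_{3}(123)}t^{\bdes(\pi)}=\sum_{\pi\in\mathfrak{S}_{3}(321)}t^{\bdes(\pi)}=3+2t$, so $n=3$ cannot separate the class of $123$ from that of $321$.

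The argument has no conceptual obstacle, only the bookkeeping of inspecting $56$ permutations. An alternative worth pointing out to the reader is that the closed-form $\bdes$-distributions for each of the four classes---derived later in the paper---are pairwise distinct already as polynomials, so the classification could also be recovered \emph{a posteriori} from those explicit formulas.
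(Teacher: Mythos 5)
Your proposal is correct and follows essentially the same route as the paper: the paper likewise invokes reverse-complementation to reduce to the four representatives $123$, $132$, $231$, $321$ and then notes that these are "readily checked" to lie in distinct classes, which your $n=4$ computation makes explicit (the paper's Tables 3--6 give $B_4(t;123)=4+8t+2t^2$, $B_4(t;132)=4+9t+t^2$, $B_4(t;231)=8+6t$, and $B_4(t;321)=5+8t+t^2$, confirming your check). Your observation that $n=3$ is insufficient is also accurate, since $B_3(t;\sigma)=3+2t$ for $\sigma\in\{123,132,321\}$.
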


\begin{thm} \label{t-bdesW2}
The following are all $\bdes$-Wilf equivalence classes among pattern sets $\Pi\subseteq\mathfrak{S}_{3}$ with $\left|\Pi\right|=2$\textup{:}
\begin{align*}
[213,231]_{\bdes} & =\left\{ \{213,231\},\{132,312\},\{213,312\},\{132,231\} \right\},\\
[231,321]_{\bdes} & =\left\{ \{231,321\},\{312,321\} \right\},\\
[123,231]_{\bdes} & =\left\{ \{123,231\},\{123,312\} \right\},\\
[132,321]_{\bdes} & =\left\{ \{132,321\},\{213,321\} \right\},\\
[123,132]_{\bdes} & =\left\{ \{123,132\},\{123,213\},\{132,213\} \right\},\\
[123,321]_{\bdes} & =\left\{ \{123,321\} \right\},\quad\text{and}\\
[231,312]_{\bdes} & =\left\{ \{231,312\} \right\}.
\end{align*}
In particular, the classes $[213,231]_{\bdes}$ and $[123,132]_{\bdes}$ contain non-trivial $\bdes$-Wilf equivalences.
\end{thm}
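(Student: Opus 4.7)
The plan is to handle the proof in three stages: first reduce by trivial reverse-complement equivalences, next establish the two remaining non-trivial equivalences via generating-function recurrences, and finally distinguish the resulting classes by computing their $\bdes$-polynomials at $n = 4$.

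Since reverse-complementation preserves $\bdes$ and bijects $\mathfrak{S}_n(\Pi)$ with $\mathfrak{S}_n(\Pi^{rc})$, I first partition the fifteen two-element pattern sets into $rc$-orbits: there are nine, consisting of the three $rc$-fixed pairs $\{132,213\}$, $\{123,321\}$, $\{231,312\}$ together with six orbits of size two. Comparing with the seven classes stated, exactly two non-trivial mergers remain to be verified: $\{213,231\} \sim_{\bdes} \{213,312\}$ and $\{132,213\} \sim_{\bdes} \{123,132\}$. For each of these I would decompose permutations by the position of the minimum element $1$. In $\mathfrak{S}_n(213,231)$, avoidance forces the prefix before $1$ to be a strictly decreasing run of consecutive integers drawn from the top of the value range, so after $1$ one recovers a recursive $(213,231)$-avoider on the complementary block of values. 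In $\mathfrak{S}_n(213,312)$, avoidance instead forces $1$ to lie at position $1$ or $n$, giving a binary recursion. Translating both decompositions into recurrences for $A_n(t) := \sum_{\pi \in \mathfrak{S}_n(\Pi)} t^{\bdes(\pi)}$ together with an auxiliary polynomial $A_n^*(t)$ tracking permutations that end in $1$, a short elimination shows that both families satisfy the single recurrence $A_n(t) = 2 A_{n-1}(t) + (t-1) A_{n-2}(t)$ with $A_1(t) = 1$ and $A_2(t) = 2$, which proves the first equivalence.

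For the second equivalence the structure is analogous but subtler. The class $\mathfrak{S}_n(132,213)$ decomposes as $\alpha \cdot 1 \cdot (2,3,\ldots,n-k+1)$, where $\alpha$ is a $(132,213)$-avoider on the largest $k-1$ values, while $\mathfrak{S}_n(123,132)$ forces $1$ into position $n-1$ or $n$, with the former case producing $\pi = \beta \cdot (x-1)(x-2)\cdots 2 \cdot 1 \cdot x$ for some $x \in \{2,\ldots,n\}$ and $\beta$ a $(123,132)$-avoider on $\{x+1,\ldots,n\}$. Introducing the analogous pair $(A_n,A_n^*)$ for each class, a simultaneous induction on $n$ shows that both pairs satisfy identical recurrences with identical initial data, yielding $\{132,213\} \sim_{\bdes} \{123,132\}$. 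Finally, to check that the seven classes are pairwise distinct I would compute $\sum_{\pi \in \mathfrak{S}_4(\Pi)} t^{\bdes(\pi)}$ for one representative of each class; the seven polynomials $4 + 4t$, $5 + 3t$, $4 + 3t$, $2 + 5t$, $2 + 5t + t^2$, $1 + 2t + t^2$, and $8$ are pairwise distinct, ruling out any further mergers.

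The main obstacle lies in the decomposition of $\mathfrak{S}_n(123,132)$: because $1$ can sit at either of two positions producing structurally different sub-permutations, one must carefully track which big descent is introduced at the boundary between the prefix $\beta$ and the decreasing suffix $(x-1)(x-2)\cdots 2$. This is precisely what forces us to carry the auxiliary ``ends-in-$1$'' polynomial alongside the main one and to match both by a joint induction, rather than getting away with a single generating function as in the first non-trivial equivalence.
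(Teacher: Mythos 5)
Your proposal is correct, and it reaches the classification by a genuinely different route than the paper. The paper reduces to the same nine reverse-complement representatives, but then derives an explicit formula for the $\bdes$ distribution over every one of them: the two non-trivial mergers fall out because $\{213,231\}$ and $\{213,312\}$ both yield $b_{n,k}=\binom{n}{2k+1}$ via bijections to binary words sending $\bdes$ to $\occ_{01}$, while $\{123,132\}$ and $\{132,213\}$ both yield the same rational generating function via the right-to-left-maxima encoding sending $\bdes$ to $\occ_{10}+\occ_{011}$; distinctness of the seven classes is then read off from the closed forms. You instead prove only what the theorem needs: the two mergers via matching recurrences coming from position-of-$1$ decompositions, and separation by a single finite check at $n=4$ (your seven polynomials agree with the paper's tables and formulas and are indeed pairwise distinct). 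Your structural claims check out -- your decompositions of $\mathfrak{S}_n(123,132)$ and $\mathfrak{S}_n(132,213)$ are exactly what \eqref{e-123-132-rlmax} and \eqref{e-132-213-rlmax} give when read off the last right-to-left maximum $s_1$ -- and the two systems really do coincide: in both classes the junction descent into the block containing $1$ is automatically big when $s_1\geq 2$, and when $s_1=1$ it is big exactly when the prefix does not end in its minimum, so a single auxiliary ``ends in its minimum'' polynomial closes both systems with identical initial data; the same elimination yields $A_n=2A_{n-1}+(t-1)A_{n-2}$ on both sides of the first merger. What the paper's approach buys is the explicit enumeration formulas and $\bdes$-preserving bijections (e.g.\ $\phi_{213,312}^{-1}\circ\phi_{213,231}$); what yours buys is economy, at the cost of the closed forms. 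One minor remark: for $\{213,231\}$ the full-history recurrence $A_n=A_{n-1}+t\sum_{m=1}^{n-2}A_m+1$ already telescopes to the two-term recurrence, so the auxiliary polynomial is only genuinely needed on the $\{213,312\}$ side.
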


\renewcommand{\arraystretch}{1.5}
\begin{table}
\begin{centering}
\begin{tabular}{|>{\raggedright}m{0.75in}|>{\raggedright}m{3.65in}|>{\raggedright}m{0.75in}|>{\raggedright}m{0.6in}|}
\hline 
$\Pi$ & Distribution of $\bdes$ over $\mathfrak{S}_{n}(\Pi)$ & Reference & OEIS\tabularnewline
\hline 
$\{132\}$ & $B(t,x;132)=$

$\frac{1-2(1-t)x+(1-t)(1-2t)x^{2}-\sqrt{1-4x+6(1-t)x^{2}-4(1-t)^{2}x^{3}+(1-t)^{2}x^{4}}}{2tx(1-(1-t)x)}$ & Thm.\ \ref{t-132gf} & none\tabularnewline
\hline 
$\{231\}$ & $b_{n,k}(231)=\frac{2^{n-2k-1}}{k+1}{n-1 \choose 2k}{2k \choose k}$ & Cor.\ \ref{c-231} & A091894\tabularnewline
\hline 
$\{321\}$ & $B(t,x;321)=\frac{2}{1-2(1-t)x^{2}+\sqrt{1-4x+4(1-t)x^{2}}}$ & Thm.\ \ref{t-321gf} & none\tabularnewline
\hline 
$\{123\}$ & $b_{n,k}(123)=\frac{2}{n+1}{n+1 \choose k+2}{n-2 \choose k}$ & Thm.\ \ref{t-123} & A108838\tabularnewline
\hline 
$\{213,231\}$ & \multirow{2}{3.65in}{$b_{n,k}(213,231)={n \choose 2k+1}$} & Thm.\ \ref{t-213-231} & A034867\tabularnewline
\cline{1-1} \cline{3-4} \cline{4-4} 
$\{213,312\}$ &  & Thm.\ \ref{t-213-312} & A034867\tabularnewline
\hline 
$\{123,132\}$ & \multirow{2}{3.65in}{$B(t,x;123,132)=\frac{1-x+(1-t)x^{2}-(1-t)^{2}x^{3}}{1-2x+(1-t)x^{2}+t(1-t)x^{3}}$} & Thm.\ \ref{t-123-132} & none\tabularnewline
\cline{1-1} \cline{3-4} \cline{4-4} 
$\{132,213\}$ &  & Thm.\ \ref{t-132-213} & none\tabularnewline
\hline 
$\{231,321\}$ & $B(t,x;231,321)=\frac{1-x}{1-2x+(1-t)x^{3}}$ & Thm.\ \ref{t-231-321} & A334658\tabularnewline
\hline 
$\{123,231\}$\vspace{4bp}
 & \vspace{2bp}
$b_{n,k}(123,231)=\begin{cases}
n, & \text{if }k=0,\\
{n-1 \choose 2}, & \text{if }k=1,\\
0, & \text{otherwise.}
\end{cases}$\vspace{2bp}
 & Prop.\ \ref{p-123-231}\vspace{4bp}
 & none\vspace{4bp}
\tabularnewline
\hline 
$\{132,321\}$\vspace{4bp}
 & \vspace{2bp}
$b_{n,k}(132,321)=\begin{cases}
2, & \text{if }k=0,\\
{n \choose 2}-1, & \text{if }k=1,\\
0, & \text{otherwise.}
\end{cases}$\vspace{2bp}
 & Prop.\ \ref{p-132-321}\vspace{4bp}
 & none\vspace{4bp}
\tabularnewline
\hline 
$\{231,312\}$\vspace{4bp}
 & \vspace{2bp}
$b_{n,k}(231,312)=\begin{cases}
2^{n-1}, & \text{if }k=0,\\
0, & \text{otherwise.}
\end{cases}$\vspace{2bp}
 & Prop.\ \ref{p-231-312}\vspace{4bp}
 & none\vspace{4bp}
\tabularnewline
\hline 
$\{123,321\}$ & $B(t,x;123,321)=$

$\qquad\qquad1+x+2x^{2}+(2+2t)x^{3}+(1+2t+t^{2})x^{4}$ & Prop.\ \ref{p-123-321} & none\tabularnewline
\hline 
\end{tabular}
\par\end{centering}
\caption{\label{tb-enum}Summary of enumeration formulas.}
\end{table}

Let $b_{n,k}(\Pi)$ denote the number of permutations $\pi\in\mathfrak{S}_{n}(\Pi)$ with $\bdes(\pi)=k$, let
\[
B_{n}(t;\Pi)\coloneqq\sum_{\pi\in\mathfrak{S}_{n}(\Pi)}t^{\bdes(\pi)}=\sum_{k\geq0}b_{n,k}(\Pi)t^{k}
\]
denote the polynomial encoding the distribution of $\bdes$ over $\mathfrak{S}_{n}(\Pi)$, and let
\[
B(t,x;\Pi)\coloneqq\sum_{n=0}^{\infty}B_{n}(t;\Pi)x^{n}
\]
denote the ordinary generating function for the polynomials $B_{n}(t;\Pi)$. We derive a formula for the big descent distribution over each $\bdes$-Wilf equivalence class listed in Theorems~\ref{t-bdesW1}\textendash \ref{t-bdesW2}. A summary of these formulas is given in Table \ref{tb-enum}, including references to corresponding entries in the On-Line Encyclopedia of Integer Sequences (OEIS) \cite{oeis}. Most of our proofs involve bijections with Dyck paths and binary words, which translate big descents into occurrences of certain factors in an associated path or word. The distribution of $\bdes$ over $123$-avoiding permutations is especially noteworthy, as it is given by a variation of the Narayana numbers, yet we do not know of a simple bijective proof of this fact; our proof is quite convoluted, requiring a combination of generating function operations, some auxiliary statistics, and a bijection involving colored Motzkin paths.

The organization of this paper is as follows. Section \ref{s-single} will focus on the distribution of big descents over permutations avoiding a single length 3 pattern. Along the way, we uncover an interesting joint equidistribution between two pairs of statistics over 231-avoiding permutations---namely, $(\bdes,\des)$ and $(\pk,\des)$ where $\pk$ is the peak number statistic---as well as a surprising new interpretation for the Narayana numbers in terms of ``right big descents'' of 123-avoiding permutations. Section \ref{s-double} will be devoted to our results for permutations avoiding two patterns of length 3. We suggest some future directions of research in Section \ref{s-future}, including several conjectures concerning real-rootedness, log-concavity, and Schur positivity. The proofs of two technical lemmas needed for our study of 123-avoiding permutations are delayed until Appendix \ref{s-B}.

\section{\label{s-single}Results for single patterns}

In this section, we study the distribution of $\bdes$ over permutations avoiding a single pattern of length 3. By reverse-complementation symmetry, we only need to consider the patterns $123$, $132$, $231$, and $321$, and it is readily checked that they fall into different $\bdes$-Wilf equivalence classes, thus confirming Theorem \ref{t-bdesW1}. Moreover, we derive a formula for the distribution of $\bdes$ over each of these $\bdes$-Wilf equivalence classes.

\subsection{The pattern 132}

Let us begin by stating the following formula for the generating function $B(t,x;132)$.

\begin{thm}
\label{t-132gf}We have 
\begin{multline*}
B(t,x;132)\\
\quad=\frac{1-2(1-t)x+(1-t)(1-2t)x^{2}-\sqrt{1-4x+6(1-t)x^{2}-4(1-t)^{2}x^{3}+(1-t)^{2}x^{4}}}{2tx(1-(1-t)x)}.
\end{multline*}
\end{thm}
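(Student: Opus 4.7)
My plan is to decompose $132$-avoiding permutations recursively by the position of the maximum letter, derive a pair of functional equations using an auxiliary generating function, and solve the resulting quadratic for $B(t,x;132)$. The decomposition is the familiar one: if $\pi \in \mathfrak{S}_{m}(132)$ has $\pi_{i} = m$, then $\pi = L \cdot m \cdot R$ where $L = \pi_{1}\dotsm\pi_{i-1}$ is a $132$-avoiding permutation on the values $\{m-i+1,\dotsc,m-1\}$ and $R = \pi_{i+1}\dotsm\pi_{m}$ is a $132$-avoiding permutation on $\{1,\dotsc,m-i\}$. Under this decomposition, big descents occurring inside $L$ correspond to big descents of $\std(L)$, big descents inside $R$ are just big descents of $R$, position $i-1$ is always an ascent, and only the status of position $i$ remains to be analyzed.

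The subtlety lies in deciding when position $i$ contributes a big descent. It is a descent only when $R \ne \varepsilon$, and then it is big exactly when $\pi_{i+1} \le m-2$. Since $\pi_{i+1} \le m - i$, this holds automatically for $i \ge 2$, but when $i = 1$ it holds if and only if $R$ does not begin with its own maximum letter. To encode this condition, let $A(t,x)$ denote the generating function for those $132$-avoiding permutations that are either empty or begin with their maximum letter, and set $C(t,x) = B(t,x;132) - A(t,x)$, which enumerates the remaining (non-empty) $132$-avoiding permutations. Writing a non-empty permutation enumerated by $A$ as $m \cdot \sigma$ and splitting on whether $\sigma \in A$ (empty, or starting with its max, so no big descent at position~$1$) or $\sigma \in C$ (forcing a big descent at position~$1$) gives
\[
A = 1 + x A + tx\,C.
\]
Decomposing permutations enumerated by $C$ by position of the maximum (necessarily $i \ge 2$) and separating the cases $R = \varepsilon$ and $R \ne \varepsilon$ gives
\[
C = x(B - 1) + tx(B - 1)^{2}, \qquad B = A + C,
\]
where the factor of $t$ in the second term records the automatic big descent at position $i$ when $R$ is non-empty.

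To conclude, I would eliminate $A$ by substituting $A = B - C$ into the first equation, obtaining $C = (B(1-x) - 1)/(1 - (1-t)x)$. Substituting this expression into the second equation yields a quadratic in $B$ with polynomial coefficients in $t$ and $x$, and selecting the branch determined by $B(t,0) = 1$ produces the claimed closed form; a short expansion then verifies that the discriminant simplifies to $1 - 4x + 6(1-t)x^{2} - 4(1-t)^{2}x^{3} + (1-t)^{2}x^{4}$. I expect the main obstacle to be precisely the case analysis at position $i=1$: because the bigness of the descent there depends on an internal feature of $R$, a single-variable recurrence does not suffice, and the auxiliary generating function $A$ is essential. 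Once this bookkeeping is in place, the remainder of the argument is routine generating function algebra.
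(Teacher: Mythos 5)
Your proposal is correct and follows essentially the same route as the paper: the paper's Lemma \ref{l-132funeq} uses the identical decomposition around the maximum together with the auxiliary generating function $\bar{B}$ for nonempty $132$-avoiders beginning with their maximum, which is exactly your $A-1$, and your two functional equations are algebraically equivalent to \eqref{e-132feq1}--\eqref{e-132feq2}. The elimination and branch selection then proceed just as you describe.
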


Table \ref{tb-132} displays the first ten polynomials $B_{n}(t;132)$, obtained by extracting the coefficient of $x^{n}$ in $B(t,x;132)$ for $0\leq n\leq9$.

\renewcommand{\arraystretch}{1.2}
\begin{table}
\begin{centering}
\begin{tabular}{|c|c|c|c|c|}
\cline{1-2} \cline{2-2} \cline{4-5} \cline{5-5} 
$n$ & $B_{n}(t;132)$ & \quad{} & $n$ & $B_{n}(t;132)$\tabularnewline
\cline{1-2} \cline{2-2} \cline{4-5} \cline{5-5} 
$0$ & $1$ &  & $5$ & $5+25t+12t^{2}$\tabularnewline
\cline{1-2} \cline{2-2} \cline{4-5} \cline{5-5} 
$1$ & $1$ &  & $6$ & $6+55t+64t^{2}+7t^{3}$\tabularnewline
\cline{1-2} \cline{2-2} \cline{4-5} \cline{5-5} 
$2$ & $2$ &  & $7$ & $7+105t+233t^{2}+82t^{3}+2t^{4}$\tabularnewline
\cline{1-2} \cline{2-2} \cline{4-5} \cline{5-5} 
$3$ & $3+2t$ &  & $8$ & $8+182t+674t^{2}+505t^{3}+61t^{4}$\tabularnewline
\cline{1-2} \cline{2-2} \cline{4-5} \cline{5-5} 
$4$ & $4+9t+t^{2}$ &  & $9$ & $9+294t+1668t^{2}+2206t^{3}+660t^{4}+25t^{5}$\tabularnewline
\cline{1-2} \cline{2-2} \cline{4-5} \cline{5-5} 
\end{tabular}
\par\end{centering}
\caption{\label{tb-132}Distribution of $\bdes$ over $\mathfrak{S}_{n}(132)$ for $0\leq n\leq9$.}
\end{table}

For our proof of Theorem \ref{t-132gf}, we shall need the following system of functional equations for the generating functions $B(t,x;132)$ and 
\[
\bar{B}(t,x;132)\coloneqq\sum_{n=1}^{\infty}\sum_{\substack{\pi\in\mathfrak{S}_{n}(132)\\
\pi_{1}=n}}t^{\bdes(\pi)}x^{n}.
\]

\begin{lem}
\label{l-132funeq}Abbreviate $B\coloneqq B(t,x;132)$ and $\bar{B}\coloneqq\bar{B}(t,x;132)$. We have
\begin{align}
B &= 1+\bar{B}+x(B-1)+tx(B-1)^{2}\quad\text{and}\label{e-132feq1}\\
\bar{B} &= x(1+\bar{B}+t(B-\bar{B}-1)).\label{e-132feq2}
\end{align}
\end{lem}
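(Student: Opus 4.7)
The plan is to prove both equations by a case analysis on the position $j$ of the largest letter $n$ in a nonempty 132-avoiding permutation. The key structural tool is the standard decomposition of $\pi \in \mathfrak{S}_n(132)$ with $\pi_j = n$: one must have $\pi = \alpha\, n\, \beta$, where $\alpha$ is a permutation of $\{n-j+1,\ldots,n-1\}$ and $\beta$ is a permutation of $\{1,\ldots,n-j\}$, with both $\std(\alpha)$ and $\std(\beta)$ themselves $132$-avoiding---otherwise a 132 pattern would be formed with $n$. Because the values appearing in $\alpha$ (respectively $\beta$) form a block of consecutive integers, standardization preserves all differences $\pi_i - \pi_{i+1}$ within each factor, so $\bdes(\pi)$ equals $\bdes(\std(\alpha)) + \bdes(\std(\beta))$ plus contributions from the two junctions.

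For \eqref{e-132feq1}, I would split $\mathfrak{S}(132)$ into three classes: (i) the empty permutation, contributing $1$; (ii) permutations with $j=1$, which contribute $\bar{B}$ by definition; and (iii) permutations with $j \ge 2$, i.e., $\pi = \alpha\, n\, \beta$ with $\alpha$ nonempty and $\beta$ possibly empty. In case (iii), the junction $(\alpha_{j-1}, n)$ is an ascent and so is never a big descent, while the junction $(n,\beta_1)$ only exists when $\beta \ne \varepsilon$, in which case $\beta_1 \le n-j \le n-2$, so it is automatically a big descent. Summing over nonempty $\alpha$ with $\beta$ empty gives $x(B-1)$, and summing over nonempty $\alpha$ and nonempty $\beta$ gives $tx(B-1)^2$, yielding \eqref{e-132feq1}.

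For \eqref{e-132feq2}, every permutation counted by $\bar{B}$ has the form $\pi = n\beta$ with $\beta \in \mathfrak{S}_{n-1}(132)$. Position $1$ of $\pi$ is a big descent precisely when $\beta \neq \varepsilon$ and $\beta_1 \le n-2$, which, since $\beta_1 \in \{1,\ldots,n-1\}$, is equivalent to $\beta$ being nonempty and not starting with its maximum value $n-1$. I would split the sum according to: $\beta = \varepsilon$ (contribution $x$), $\beta$ nonempty and starting with its max (contribution $x\bar{B}$, using the defining property of $\bar{B}$), and $\beta$ nonempty and not starting with its max (contribution $xt(B-1-\bar{B})$, since the set of nonempty 132-avoiding permutations not starting with their max is enumerated by $B-1-\bar{B}$). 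Adding these contributions yields $\bar{B} = x + x\bar{B} + xt(B - 1 - \bar{B})$, which is \eqref{e-132feq2}.

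The only delicate point is the bookkeeping for big descents at the two junctions introduced by the decomposition. The decisive observation, which makes all case contributions factor cleanly, is that whenever $\alpha$ is nonempty the gap $n - \beta_1$ is automatically at least $2$ (because $\beta_1 \le n-j \le n-2$), so the big descent at the junction $(n,\beta_1)$ is forced rather than conditional on the internal structure of $\beta$. No other step presents a genuine obstacle.
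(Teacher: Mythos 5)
Your proposal is correct and follows essentially the same route as the paper: the same decomposition $\pi=\sigma n\tau$ (with the letters before $n$ all larger than those after), the same three-way case split for each equation, and the same key observation that when $\sigma$ is nonempty the junction $(n,\tau_1)$ is automatically a big descent because $\tau$ cannot begin with $n-1$. The only difference is that you make explicit the (correct) remark that standardization of the consecutive-value blocks preserves big descents, which the paper leaves implicit.
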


\begin{proof}
Consider $\pi\in\mathfrak{S}_{n}(132)$ with $\pi_{1}=n$, and write $\pi=n\tau$. The case when $\tau$ is empty contributes the term $x$ to \eqref{e-132feq2}. Otherwise, suppose that $\tau$ begins with $n-1$; then $\bdes(\pi)=\bdes(\tau)$, so this case contributes $x\bar{B}$ to \eqref{e-132feq2}. Finally, if $\tau$ does not begin with $n-1$ (and is nonempty), then $\bdes(\pi)=\bdes(\tau)+1$, so this case contributes $tx(B-\bar{B}-1)$ to \eqref{e-132feq2}. Thus, \eqref{e-132feq2} is proven.

For \eqref{e-132feq1}, the empty permutation contributes the term $1$, and every nonempty $\pi\in\mathfrak{S}_{n}(132)$ can be uniquely decomposed as $\pi=\sigma n\tau$ where both $\sigma$ and $\tau$ are 132-avoiding and every letter in $\tau$ is smaller than every letter in $\sigma$. The case when $\sigma$ is empty contributes $\bar{B}$ to \eqref{e-132feq1}. If $\sigma$ is nonempty but $\tau$ is empty, then $\pi=\sigma n$ and thus $\bdes(\pi)=\bdes(\sigma)$, so this case contributes $x(B-1)$ to \eqref{e-132feq1}. Finally, suppose that neither $\sigma$ nor $\tau$ is empty. Note that $\tau$ cannot begin with $n-1$, since every letter in $\tau$ is smaller than every letter in $\sigma$, so we have $\bdes(\pi)=\bdes(\sigma)+\bdes(\tau)+1$. Therefore, this last case contributes $tx(B-1)^{2}$ to \eqref{e-132feq1}, and we are done.
\end{proof}

Let us now prove Theorem \ref{t-132gf}.

\begin{proof}[Proof of Theorem \ref{t-132gf}]
Solving \eqref{e-132feq2} for $\bar{B}$ yields
\[
\bar{B}=\frac{x\left(1+t(B-1)\right)}{1-(1-t)x}.
\]
Substituting this expression into \eqref{e-132feq1} and solving for $B$ gives the stated formula.
\end{proof}

\subsection{Interlude: Dyck paths}

Before continuing, let us briefly review some basic definitions and notions surrounding Dyck paths, which we will need for studying big descents in 231-, 321-, and 123-avoiding permutations.

A \textit{Dyck path} of semilength $n$ is a lattice path in $\mathbb{Z}^{2}$\textemdash consisting of $2n$ steps from the step set $\{(1,1),(1,-1)\}$\textemdash which starts at the origin $(0,0)$, ends on the $x$-axis, and never traverses below the $x$-axis. Let $\mathcal{D}_{n}$ denote the set of Dyck paths of semilength $n$, and let $\varepsilon$ denote the empty path (the only Dyck path of semilength 0). The steps $(1,1)$ are called \textit{up steps} and denoted by the letter $U$, and the $(1,-1)$ are called \textit{down steps} and denoted by $D$. Thus, we can represent Dyck paths as words on the alphabet $\{U,D\}$. See Figure \ref{f-Dyck} for an example.

When we refer to a factor of a Dyck path, we really mean a factor (i.e., a consecutive subsequence) of the corresponding word. Given a Dyck path $\mu$ and a word $\alpha$ on the alphabet $\{U,D\}$, let $\occ_{\alpha}(\mu)$ denote the number of $\alpha$-factors in $\mu$\textemdash i.e., occurrences of $\alpha$ as a factor in $\mu$. For example, the Dyck path $\mu$ in Figure \ref{f-Dyck} contains four $UD$-factors, so $\occ_{UD}(\mu)=4$. We will also use the notation $\occ_{\alpha}^{0}(\mu)$ for the number of $\alpha$-factors in $\mu$ that start at level 0 (i.e., the occurrence of the factor starts at the $x$-axis). Continuing the example in Figure \ref{f-Dyck}, we see that only the third of the four $UD$-factors begins at level 0, so $\occ_{UD}^{0}(\mu)=1$.

\begin{figure}
\begin{center}
\begin{tikzpicture}[scale=0.5] 
\draw [line width=0] (4,2); 
\draw[pathcolorlight] (0,0) -- (14,0); 
\drawlinedots{0,1,2,3,4,5,6,7,8,9,10,11,12,13,14}{0,1,2,3,2,1,2,1,0,1,0,1,2,1,0} 
\end{tikzpicture}
\end{center}
\vspace{-10bp}
\caption{\label{f-Dyck}The Dyck path $\mu\in\mathcal{D}_{7}$ corresponding to the word $UUUDDUDDUDUUDD$.}
\end{figure}
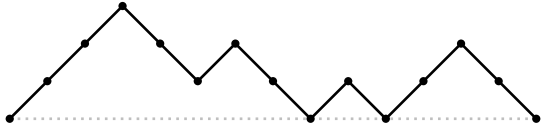

Let us describe two recursive decompositions for Dyck paths that will play an important role going forward. First, consider the \textit{first return decomposition} of Dyck paths: every Dyck path $\mu$ is either empty or can be uniquely decomposed in the form 
\[
\mu=U\mu_{1}D\mu_{2}
\]
where $\mu_{1}$ and $\mu_{2}$ are Dyck paths. Similarly, we have the \textit{last return decomposition}: every Dyck path $\mu$ is either empty or can be uniquely decomposed in the form 
\[
\mu=\mu_{1}U\mu_{2}D
\]
where $\mu_{1}$ and $\mu_{2}$ are Dyck paths. See Figure \ref{f-Dyck-fr} for the first and last return decompositions of our running example.

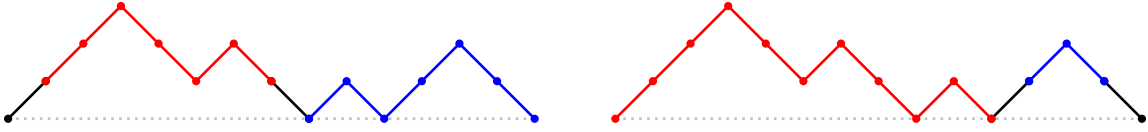
\begin{figure}
\begin{center}
\begin{tikzpicture}[scale=0.5] 
\draw [line width=0] (4,2); 
\draw[pathcolorlight] (0,0) -- (14,0); 
\drawlinedots{0,1}{0,1};
\drawlinedots{7,8}{1,0};
\drawlinedotsred{1,2,3,4,5,6,7}{1,2,3,2,1,2,1}
\drawlinedotsblue{8,9,10,11,12,13,14}{0,1,0,1,2,1,0};
\end{tikzpicture}
\qquad
\begin{tikzpicture}[scale=0.5] 
\draw [line width=0] (4,2); 
\draw[pathcolorlight] (0,0) -- (14,0);
\drawlinedots{10,11}{0,1};
\drawlinedots{13,14}{1,0};
\drawlinedotsred{0,1,2,3,4,5,6,7,8,9,10}{0,1,2,3,2,1,2,1,0,1,0}
\drawlinedotsblue{11,12,13}{1,2,1};
\end{tikzpicture}
\end{center}
\vspace{-10bp}
\caption{\label{f-Dyck-fr}The first and last return decompositions of the Dyck path $\mu$ in Figure \ref{f-Dyck}, where the paths $\mu_{1}$ are colored \textcolor{red}{red} and the $\mu_{2}$ colored \textcolor{blue}{blue}.}
\end{figure}

Finally, we note that Dyck paths are sometimes viewed as paths from $(0,0)$ to $(n,n)$, with unit north and east steps---playing the roles of $U$ and $D$, respectively---that never traverse below the diagonal line $y=x$. It will be convenient to take this view in Sections~\ref{ss-321} and~\ref{ss-123}.

\subsection{\label{ss-231}The pattern 231}

The main result of this section is a joint equidistribution over 231-avoiding permutations, from which a formula for counting 231-avoiding permutations by big descents will follow. Let us first define several additional permutation statistics which will play a role in this section.
\begin{itemize}
\item Call $k\in[n-1]$ a \textit{small descent} of $\pi\in\mathfrak{S}_{n}$ if $\pi_{k}=\pi_{k+1}+1$\textemdash i.e., if $k$ is a descent that is not a big descent.\footnote{Small descents are also called \textit{reverse successions} or \textit{falling successions} in the literature.} Let $\sdes(\pi)$ denote the number of small descents of $\pi$.
\item Call $k\in[n-1]$ a \textit{left double descent} of $\pi\in\mathfrak{S}_{n}$ if either $k=1$ and $\pi_{1}>\pi_{2}$, or if $\pi_{k-1}>\pi_{k}>\pi_{k+1}$. Let $\lddes(\pi)$ denote the number of left double descents of $\pi$.
\item Call $k\in\{2,3,\dots,n-1\}$ a \textit{peak} of $\pi\in\mathfrak{S}_{n}$ if $\pi_{k-1}<\pi_{k}>\pi_{k+1}$. Let $\pk(\pi)$ denote the number of peaks of $\pi$.
\end{itemize}
For example, let $\pi=214863975$. Then $\sdes(\pi)=1$, $\lddes(\pi)=3$, and $\pk(\pi)=2$.

\begin{thm} \label{t-231joint}
For all $n\geq0$, the statistics $(\bdes,\des)$ and $(\pk,\des)$ are jointly equidistributed over $\mathfrak{S}_{n}(231)$.
\end{thm}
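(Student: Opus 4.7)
My plan is to prove the theorem by showing that the joint generating functions
\[
F(p,q,x) \coloneqq \sum_{n \geq 0} \sum_{\pi \in \mathfrak{S}_n(231)} p^{\bdes(\pi)} q^{\des(\pi)} x^n
\quad\text{and}\quad
G(p,q,x) \coloneqq \sum_{n \geq 0} \sum_{\pi \in \mathfrak{S}_n(231)} p^{\pk(\pi)} q^{\des(\pi)} x^n
\]
satisfy the same algebraic functional equation with the same constant term, and hence are equal as formal power series. Extracting the coefficient of $x^n$ will then yield the theorem. Both derivations exploit the standard recursive decomposition of $231$-avoiders: every nonempty $\pi\in\mathfrak{S}_n(231)$ factors uniquely as $\pi=\sigma n\tau$, where $\sigma$ and $\tau$ are $231$-avoiding after standardization and every letter of $\sigma$ is smaller than every letter of $\tau$.

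The first step is to record how the three statistics transform under this decomposition. A direct check gives
\begin{align*}
\des(\pi) &= \des(\sigma) + \des(\tau) + [\tau \neq \varepsilon],\\
\pk(\pi) &= \pk(\sigma) + \pk(\tau) + [\sigma \neq \varepsilon]\,[\tau \neq \varepsilon],\\
\bdes(\pi) &= \bdes(\sigma) + \bdes(\tau) + [\tau \neq \varepsilon \text{ and } \tau_1 \neq \max(\tau)],
\end{align*}
in Iverson bracket notation. The key observations are that the inserted letter $n$ creates a descent exactly when $\tau$ is nonempty, a peak exactly when both $\sigma$ and $\tau$ are nonempty, and a big descent exactly when $\tau$ is nonempty and does not begin with its maximum.

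Because the contributions of $\pk$ and $\des$ at the split point depend only on emptiness of the factors, summing over all $(\sigma,\tau)$ immediately yields the quadratic functional equation
\[
G - 1 = x\bigl(1 + (1+q)(G-1) + pq(G-1)^2\bigr).
\]
For $F$, the condition $\tau_1 \neq \max(\tau)$ blocks a direct single-variable recursion, so I will introduce the auxiliary series
\[
F^{\ast}(p,q,x) \coloneqq \sum_{n \geq 1}\sum_{\substack{\pi \in \mathfrak{S}_n(231)\\ \pi_1 = n}} p^{\bdes(\pi)} q^{\des(\pi)} x^n
\]
enumerating $231$-avoiders that begin with their maximum. Applying the decomposition $\pi=\sigma n\tau$ with $\sigma$ free yields the compact identity $F - 1 = F F^{\ast}$, while decomposing a max-first permutation as $n\tau$ and splitting $\tau$ into the cases empty / max-first / otherwise gives $F^{\ast} = x\bigl(1 + qF^{\ast} + pq(F - 1 - F^{\ast})\bigr)$. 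Substituting $F^{\ast} = (F-1)/F$ into the latter and clearing denominators produces exactly
\[
F - 1 = x\bigl(1 + (1+q)(F-1) + pq(F-1)^2\bigr),
\]
matching the equation for $G$. Since this quadratic uniquely determines a power series with constant term $1$, I conclude $F = G$.

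The principal obstacle is the asymmetric behavior of $\bdes$ at the split point of the decomposition: its contribution depends not merely on emptiness but on whether $\tau$ begins with its maximum, which prevents a direct one-variable recursion like the one available for $G$. The trick is to encode this extra information in the auxiliary series $F^{\ast}$ and then exploit the identity $F^{\ast} = (F-1)/F$ (which is forced by the same decomposition) to collapse the two-variable system back to a single quadratic identical to the one governing $(\pk,\des)$.
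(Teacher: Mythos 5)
Your proposal is correct, but it takes a genuinely different route from the paper. The paper proves this theorem bijectively: it encodes a $231$-avoider as a Dyck path in two ways, via the first-return decomposition ($\omega_f$) and the last-return decomposition ($\omega_\ell$), shows that $\bdes$ becomes $\occ_{DUU}$ under $\omega_f$ while $\pk$ becomes $\occ_{DUU}$ under $\omega_\ell$, with $\des$ becoming $\occ_{DU}$ under both, and then composes to get an explicit $\des$-preserving bijection $\omega_\ell^{-1}\circ\omega_f$ on $\mathfrak{S}_n(231)$ carrying $\bdes$ to $\pk$. You instead work entirely with generating functions: your transfer rules for $\des$, $\pk$, and $\bdes$ under the decomposition $\pi=\sigma n\tau$ are all correct (in particular, $n$ is a big descent over $\tau_1$ exactly when $\tau_1\neq\max(\tau)=n-1$), the identity $F-1=FF^{\ast}$ and the case analysis giving $F^{\ast}=x\left(1+qF^{\ast}+pq(F-1-F^{\ast})\right)$ both check out, and eliminating $F^{\ast}$ does yield the same quadratic $Y=x\left(1+(1+q)Y+pqY^2\right)$ satisfied by $G-1$, which determines the series uniquely. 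The trade-off: your argument is shorter, self-contained, and the shared functional equation could even be solved to re-derive Corollary \ref{c-231jointform} directly rather than citing the known $(\pk,\des)$ formula; the paper's argument is longer but produces an explicit bijection realizing the equidistribution, which the authors then reuse (e.g., to explain why $\sdes$ over $\mathfrak{S}_n(231)$ matches the count of Dyck paths by $DUD$-factors). Your auxiliary series for max-first permutations closely parallels the $\bar{B}$ device the paper uses for the pattern $132$ in Lemma \ref{l-132funeq}, just applied here to $231$ with the extra variable $q$.
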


Let us discuss a few consequences of Theorem \ref{t-231joint} before proving it.

\begin{cor}
\label{c-231jointform} For all $n,j,k\geq0$, the number of permutations in $\mathfrak{S}_{n}(231)$ with $\bdes(\pi)=k$ and $\des(\pi)=j$ is 
\begin{equation}
\frac{1}{k+1}{2k \choose k}{n-1 \choose 2k}{n-2k-1 \choose j-k}.\label{e-231bdesdes}
\end{equation}
\end{cor}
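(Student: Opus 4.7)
The plan is to apply Theorem~\ref{t-231joint} to reduce the problem from counting by $(\bdes,\des)$ to counting by $(\pk,\des)$, and then establish a bijective enumeration of $231$-avoiding permutations jointly by peaks and descents. By Theorem~\ref{t-231joint}, the number of $\pi\in\mathfrak{S}_n(231)$ with $\bdes(\pi)=k$ and $\des(\pi)=j$ equals the number with $\pk(\pi)=k$ and $\des(\pi)=j$, so it suffices to enumerate the latter.

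The formula factors as $C_k\cdot\binom{n-1}{2k}\cdot\binom{n-2k-1}{j-k}$, which suggests a bijection between $\mathfrak{S}_n(231)$ and bicolored Motzkin paths of length $n-1$---Motzkin paths with two flavors of flat step---in which $\pk(\pi)$ equals the number of up steps of the path and $\des(\pi)$ equals the number of up steps plus the number of ``descent-colored'' flat steps. Under such a bijection, a path with $k$ up steps (hence $k$ down steps) and $j-k$ descent-colored flats is specified by three independent choices: a Dyck path of semilength $k$ giving the matching of $U$ and $D$ steps ($C_k=\tfrac{1}{k+1}\binom{2k}{k}$ options), a selection of $2k$ positions out of $n-1$ for those $U$ and $D$ steps ($\binom{n-1}{2k}$ options), and a selection of $j-k$ positions out of the remaining $n-1-2k$ for the descent-colored flats ($\binom{n-2k-1}{j-k}$ options). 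The product is the claimed formula.

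A natural candidate for the required bijection arises from recursively decomposing a $231$-avoiding permutation according to the position of its maximum, with peaks and valleys of $\pi$ producing matched $UD$ pairs and the remaining ascent and descent positions producing flat steps colored according to whether they are ascents or descents. Alternatively, one can try to refine the proof of Theorem~\ref{t-231joint} itself, tracking $\des$ as an auxiliary statistic throughout, so that the desired joint distribution drops out directly without introducing Motzkin paths.

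The main obstacle is verifying that the bijection really does convert $(\pk,\des)$ into the intended pair of statistics on bicolored Motzkin paths; in particular, care is needed regarding boundary behavior when $\pi$ begins or ends with a descent run, which affects how peaks and valleys balance against up and down steps. Once the statistic correspondence is established, the three-factor count above is immediate.
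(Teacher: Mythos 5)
Your opening reduction is exactly the paper's: both you and the authors use Theorem~\ref{t-231joint} to convert the problem into counting permutations in $\mathfrak{S}_{n}(231)$ with $\pk(\pi)=k$ and $\des(\pi)=j$. The divergence is in how that second count is obtained. The paper simply cites a known result (Theorem~5.6 of the reference [Zhuang2017]) stating that this count is $\frac{1}{k+1}\binom{2k}{k}\binom{n-1}{2k}\binom{n-2k-1}{j-k}$, and is done. You instead propose to prove it from scratch via a bijection to bicolored Motzkin paths of length $n-1$, in which $\pk$ becomes the number of up steps and $\des$ becomes the number of up steps plus the number of descent-colored flat steps. Your three-factor count of such paths (a Catalan number for the up/down matching, $\binom{n-1}{2k}$ for the positions of the $2k$ up and down steps, and $\binom{n-2k-1}{j-k}$ for the colored flats) is correct, so \emph{if} such a statistic-preserving bijection exists, the formula follows.

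The genuine gap is that the bijection is never constructed: you describe a ``natural candidate'' from the decomposition at the maximum and then explicitly concede that ``the main obstacle is verifying that the bijection really does convert $(\pk,\des)$ into the intended pair of statistics.'' That verification is the entire mathematical content of the step; without it the proposal is a plan, not a proof. In particular, the boundary issues you flag (how initial and terminal descent runs affect the balance between peaks of $\pi$ and up steps of the path) are exactly where such arguments typically require care, and they are not resolved. To close the gap you would either need to carry out the Motzkin-path correspondence in detail --- for instance by composing one of the Dyck-path bijections $\omega_{f}$ or $\omega_{\ell}$ from Section~\ref{ss-231} with a map to $2$-Motzkin paths of the kind the paper uses in Appendix~\ref{s-B}, and proving the statistic translation --- or simply cite the known joint distribution of $(\pk,\des)$ over $\mathfrak{S}_{n}(231)$, which is what the paper does.
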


\begin{proof}
It is known \cite[Theorem 5.6]{Zhuang2017} that \eqref{e-231bdesdes} is the number of permutations in $\mathfrak{S}_{n}(231)$ with $k$ peaks and $j$ descents. The result then follows from the equidistribution given in Theorem \ref{t-231joint}.
\end{proof}

\begin{cor} \label{c-231} 
For all $n,k\geq0$, we have
\begin{equation*}
b_{n,k}(231)=\frac{2^{n-2k-1}}{k+1}{n-1 \choose 2k}{2k \choose k}.\label{e-231bdes}
\end{equation*}
\end{cor}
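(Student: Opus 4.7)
The plan is to derive Corollary \ref{c-231} directly from Corollary \ref{c-231jointform} by summing out the descent statistic $j$. Since the formula in Corollary \ref{c-231jointform} gives the number of $231$-avoiders with prescribed big descent count $k$ \emph{and} descent count $j$, we recover $b_{n,k}(231)$ as
\[
b_{n,k}(231) = \sum_{j\geq 0}\frac{1}{k+1}\binom{2k}{k}\binom{n-1}{2k}\binom{n-2k-1}{j-k}.
\]
The only factor depending on $j$ is $\binom{n-2k-1}{j-k}$, so I would pull the remaining factors outside the sum and reindex via $i = j-k$, reducing the sum to
\[
\sum_{i\geq 0}\binom{n-2k-1}{i} = 2^{n-2k-1}
\]
by the binomial theorem (noting that terms with $i<0$ or $i>n-2k-1$ vanish). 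Multiplying yields exactly the claimed expression
\[
b_{n,k}(231) = \frac{2^{n-2k-1}}{k+1}\binom{n-1}{2k}\binom{2k}{k}.
\]

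The only subtlety worth flagging is boundary behavior: when $n < 2k+1$, the factor $\binom{n-1}{2k}$ forces the expression to vanish, consistent with the impossibility of having $k$ big descents in a permutation too short to accommodate them; conversely, when $n = 2k+1$, the sum collapses to a single term and we recover the Catalan number $C_k$, as expected since in this regime every descent is a big descent and we are essentially counting $231$-avoiders with exactly $k$ descents. There is no real obstacle here, as the main work was already done in Theorem \ref{t-231joint} and Corollary \ref{c-231jointform}; this corollary is a one-line summation.
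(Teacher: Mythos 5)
Your proof is correct and matches the paper's own argument exactly: both sum the joint formula of Corollary \ref{c-231jointform} over $j$, observe that only the factor $\binom{n-2k-1}{j-k}$ depends on $j$, and evaluate the resulting sum as $2^{n-2k-1}$ by the binomial theorem. Your remarks on the boundary cases are a nice bonus but not needed; there is nothing to add.
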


\begin{proof}
Observe that
\[
\sum_{j=0}^{n-1}{n-2k-1 \choose j-k}=\sum_{j=k}^{n-k-1}{n-2k-1 \choose j-k}=\sum_{j=0}^{n-2k-1}{n-2k-1 \choose j}=2^{k-2k-1}.
\]
This equality, along with Corollary \ref{c-231jointform}, implies 
\begin{align*}
b_{n,k}(231)=\sum_{j=0}^{n-1}\frac{1}{k+1}{2k \choose k}{n-1 \choose 2k}{n-2k-1 \choose j-k} & =\frac{2^{n-2k-1}}{k+1}{n-1 \choose 2k}{2k \choose k}.\qedhere
\end{align*}
\end{proof}

We list the first ten polynomials $B_{n}(t;231)$ in Table \ref{tb-231}.

\begin{table}
\begin{centering}
\begin{tabular}{|c|c|c|c|c|}
\cline{1-2} \cline{2-2} \cline{4-5} \cline{5-5} 
$n$ & $B_{n}(t;231)$ & \quad{} & $n$ & $B_{n}(t;231)$\tabularnewline
\cline{1-2} \cline{2-2} \cline{4-5} \cline{5-5} 
$0$ & $1$ &  & $5$ & $16+24t+2t^{2}$\tabularnewline
\cline{1-2} \cline{2-2} \cline{4-5} \cline{5-5} 
$1$ & $1$ &  & $6$ & $32+80t+20t^{2}$\tabularnewline
\cline{1-2} \cline{2-2} \cline{4-5} \cline{5-5} 
$2$ & $2$ &  & $7$ & $64+240t+120t^{2}+5t^{3}$\tabularnewline
\cline{1-2} \cline{2-2} \cline{4-5} \cline{5-5} 
$3$ & $4+t$ &  & $8$ & $128+672t+560t^{2}+70t^{3}$\tabularnewline
\cline{1-2} \cline{2-2} \cline{4-5} \cline{5-5} 
$4$ & $8+6t$ &  & $9$ & $256+1792t+2240t^{2}+560t^{3}+14t^{2}$\tabularnewline
\cline{1-2} \cline{2-2} \cline{4-5} \cline{5-5} 
\end{tabular}
\par\end{centering}
\caption{\label{tb-231}Distribution of $\bdes$ over $\mathfrak{S}_{n}(231)$
for $0\leq n\leq9$.}
\end{table}

Our proof of Theorem \ref{t-231joint} will be bijective, involving two bijections from 231-avoiding permutations to Dyck paths.
To set up these bijections, we shall need the following recursive decomposition for 231-avoiding permutations. If $\pi\in\mathfrak{S}_{n}(231)$ and $n\geq1$, then we can write $\pi=\sigma n\tau$, so that $\sigma$ is the sequence of letters appearing before $n$ and $\tau$ is the sequence of letters appearing after $n$. Then both $\sigma$ and $\std(\tau)$ are 231-avoiding permutations, and the letters of $\tau$ are greater than those of $\sigma$. Conversely, if we are given 231-avoiding permutations $\sigma$ and $\bar\tau$ of total length $n-1$, then $\sigma n\tau\in\mathfrak{S}_{n}(231)$ where $\tau$ is the sequence obtained by adding $\left|\sigma\right|$ to each letter in $\bar{\tau}$.

Now, we use the above recursive decomposition for 231-avoiding permutations along with the first and last return decompositions for Dyck paths to construct the following recursive bijections $\omega_{f},\omega_{\ell}\colon\mathfrak{S}_{n}(231)\rightarrow\mathcal{D}_{n}$.
If $\pi$ is empty, let $\omega_{f}(\pi)=\omega_{\ell}(\pi)=\varepsilon$; otherwise, write $\pi=\sigma n\tau$ and let
\begin{align*} 
\omega_{f}(\pi) \coloneqq U\omega_{f}(\sigma)D\omega_{f}(\std(\tau))\qquad \text{and}\qquad
\omega_{\ell}(\pi) \coloneqq \omega_{\ell}(\std(\tau))U\omega_{\ell}(\sigma)D.
\end{align*}

\noindent\begin{minipage}{\textwidth}
\begin{lem} \label{l-231stats}
For all $n\geq0$ and $\pi\in\mathfrak{S}_{n}(231)$, we have\textup{:}
\begin{multicols}{2}
\begin{enumerate}
\item [\normalfont{(a)}] $\bdes(\pi)=\occ_{DUU}(\omega_{f}(\pi))$,
\item [\normalfont{(b)}] $\pk(\pi)=\occ_{DUU}(\omega_{\ell}(\pi))$,
\item [\normalfont{(c)}] $\des(\pi)=\occ_{DU}(\omega_{f}(\pi))$, and
\item [\normalfont{(d)}] $\des(\pi)=\occ_{DU}(\omega_{\ell}(\pi))$.
\end{enumerate}
\end{multicols}
\end{lem}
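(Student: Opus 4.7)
The plan is to prove all four parts simultaneously by induction on $n$, leveraging the recursive structure of $\omega_{f}$ and $\omega_{\ell}$ together with the decomposition $\pi=\sigma n\tau$ of a $231$-avoiding permutation. The base case $n=0$ is trivial since both paths are empty and all four statistics vanish. For the inductive step, I would verify two parallel sets of ``split identities'' (one on the permutation side, one on the Dyck-path side), and then match them via the inductive hypothesis.

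On the permutation side, the identities I would establish are
\begin{align*}
\des(\pi) &= \des(\sigma)+\des(\std(\tau))+[\tau\neq\varepsilon],\\
\bdes(\pi) &= \bdes(\sigma)+\bdes(\std(\tau))+[\tau\neq\varepsilon\ \text{and}\ \tau_{1}\neq n-1],\\
\pk(\pi) &= \pk(\sigma)+\pk(\std(\tau))+[\sigma\neq\varepsilon\ \text{and}\ \tau\neq\varepsilon],
\end{align*}
where the boundary terms track, respectively, the descent $n\tau_{1}$, whether that descent is big (i.e., $\tau_{1}\leq n-2$), and whether the letter $n$ has neighbors on both sides to form a peak at its position. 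Since $\tau$ is obtained from $\std(\tau)$ by a uniform shift of all entries, $\bdes$, $\des$, and $\pk$ are unchanged in the passage from $\tau$ to $\std(\tau)$, so no subtlety arises there. On the Dyck-path side, writing $\mu_{1}=\omega_{f}(\sigma)$, $\mu_{2}=\omega_{f}(\std(\tau))$, $\nu_{1}=\omega_{\ell}(\std(\tau))$, and $\nu_{2}=\omega_{\ell}(\sigma)$, a routine factor bookkeeping in $U\mu_{1}D\mu_{2}$ and $\nu_{1}U\nu_{2}D$, together with the fact that a nonempty Dyck path begins with $U$ and ends with $D$, yields
\begin{align*}
\occ_{DU}(U\mu_{1}D\mu_{2}) &= \occ_{DU}(\mu_{1})+\occ_{DU}(\mu_{2})+[\mu_{2}\neq\varepsilon],\\
\occ_{DU}(\nu_{1}U\nu_{2}D) &= \occ_{DU}(\nu_{1})+\occ_{DU}(\nu_{2})+[\nu_{1}\neq\varepsilon],\\
\occ_{DUU}(\nu_{1}U\nu_{2}D) &= \occ_{DUU}(\nu_{1})+\occ_{DUU}(\nu_{2})+[\nu_{1}\neq\varepsilon\ \text{and}\ \nu_{2}\neq\varepsilon],\\
\occ_{DUU}(U\mu_{1}D\mu_{2}) &= \occ_{DUU}(\mu_{1})+\occ_{DUU}(\mu_{2})+[\mu_{2}\ \text{starts with}\ UU].
\end{align*}
Noting that $\sigma=\varepsilon$ iff $\nu_{2}=\varepsilon$ and $\tau=\varepsilon$ iff $\mu_{2}=\nu_{1}=\varepsilon$, the inductive hypothesis immediately yields (b), (c), and (d) by matching boundary terms.

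The only genuinely non-automatic step is part (a), where I must show $[\mu_{2}\ \text{starts with}\ UU]=[\tau\neq\varepsilon\ \text{and}\ \tau_{1}\neq n-1]$. For this I would prove a short auxiliary observation: for any nonempty $\pi'\in\mathfrak{S}_{m}(231)$ with decomposition $\pi'=\sigma'm\tau'$, the path $\omega_{f}(\pi')=U\omega_{f}(\sigma')D\omega_{f}(\std(\tau'))$ starts with $UU$ iff $\omega_{f}(\sigma')$ is nonempty (hence begins with $U$), iff $\sigma'\neq\varepsilon$, iff $\pi'$ does not begin with its maximum letter. Applying this to $\pi'=\std(\tau)$, whose maximum letter is $|\tau|$ and which corresponds to $\tau_{1}=n-1$ under the shift, yields the desired equivalence. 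This little bridge from ``$n\tau_{1}$ is a big descent'' to ``$\omega_{f}(\std(\tau))$ starts with $UU$'' is the only delicate point in the argument; everything else reduces to the routine bookkeeping above.
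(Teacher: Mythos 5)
Your proposal is correct and follows essentially the same route as the paper: induction on the recursive decomposition $\pi=\sigma n\tau$, matching the boundary contributions on the permutation side against those at the junctions of $U\mu_{1}D\mu_{2}$ and $\nu_{1}U\nu_{2}D$, with the key bridge for part (a) being exactly the paper's observation that $\omega_{f}(\std(\tau))$ starts with $UU$ precisely when $\tau$ is nonempty and does not begin with its maximum letter $n-1$. The paper writes out only part (a) and declares (b)--(d) similar, so your version is simply a fully spelled-out form of the same argument.
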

\end{minipage}

\begin{proof}
We only prove (a); the proofs of (b)--(d) are similar.

We proceed by induction on the length $n$ of $\pi$, with the base case $(n=0)$ being trivial. Assume that $\bdes(\pi)=\occ_{DUU}(\omega_{f}(\pi))$ for all 231-avoiding permutations of length up to a fixed $m\geq0$. Let $\pi\in\mathfrak{S}_{m+1}$, and write $\pi=\sigma\:(m+1)\:\tau$. For convenience, let $\bar{\tau}=\std(\tau)$. Since $\sigma$ and $\tau$ each have length at most $m$, we have $\bdes(\sigma)=\occ_{DUU}(\omega_{f}(\sigma))$ and $\bdes(\bar{\tau})=\occ_{DUU}(\omega_{f}(\bar{\tau}))$. Observe that 
\[
\bdes(\pi)=\begin{cases}
\bdes(\sigma)+\bdes(\bar{\tau}), & \text{if }\tau\text{ begins with }m,\\
\bdes(\sigma)+\bdes(\bar{\tau})+1, & \text{otherwise,}
\end{cases}
\]
and
\[
\occ_{DUU}(\omega_{f}(\pi))=\begin{cases}
\occ_{DUU}(\omega_{f}(\sigma))+\occ_{DUU}(\omega_{f}(\bar{\tau}))+1, & \text{if }\omega_{f}(\bar{\tau})\text{ begins with }UU,\\
\occ_{DUU}(\omega_{f}(\sigma))+\occ_{DUU}(\omega_{f}(\bar{\tau})), & \text{otherwise.}
\end{cases}
\]
Let $k$ be the length of $\tau$, and write $\bar{\tau}=\hat{\sigma}k\hat{\tau}$. If $\tau$ begins with $m$, then $\bar{\tau}$ begins with $k$, so $\omega_{f}(\bar{\tau})$ begins with $UD$, and we have 
\[
\bdes(\pi)=\bdes(\sigma)+\bdes(\bar{\tau})=\occ_{DUU}(\omega_{f}(\sigma))+\occ_{DUU}(\omega_{f}(\bar{\tau}))=\occ_{DUU}(\omega_{f}(\pi))
\]
as desired. Now suppose that $\tau$ does not begin with $m$, so that $\hat{\sigma}$ is nonempty. Then $\omega_{f}(\hat{\sigma})$ is nonempty and thus begins with $U$, which implies $\omega_{f}(\bar{\tau})=U\omega_{f}(\hat{\sigma})D\omega_{f}(\hat{\tau})$ begins with $UU$. Therefore, we have
\begin{align*}
\bdes(\pi) & =\bdes(\sigma)+\bdes(\bar{\tau})+1=\occ_{DUU}(\omega_{f}(\sigma))+\occ_{DUU}(\omega_{f}(\bar{\tau}))+1=\occ_{DUU}(\omega_{f}(\pi)),
\end{align*}
completing the proof.
\end{proof}

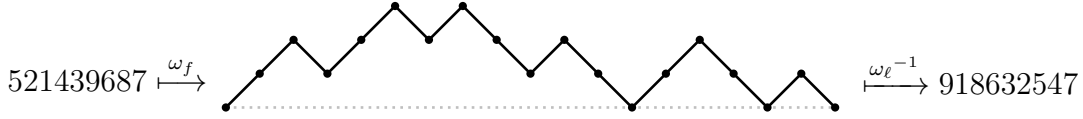
\begin{figure}
\begin{center}
\begin{tikzpicture}[scale=0.45] 
\node (1) at (-3.5,1) {$521439687 \overset{\omega_f}{\longmapsto}$};
\draw[pathcolorlight] (0,0) -- (18,0); 
\drawlinedots{0,1,2,3,4,5,6,7,8,9,10,11,12,13,14,15,16,17,18}{0,1,2,1,2,3,2,3,2,1,2,1,0,1,2,1,0,1,0}
\node (2) at (22,1) {$\xmapsto{{\omega_\ell}^{-1}} 918632547$};
\end{tikzpicture}
\end{center}
\vspace{-10bp}\caption{\label{f-231bij}An example illustrating the bijection $\omega_{\ell}^{-1}\circ\omega_{f}$.}
\end{figure}

It follows from Lemma \ref{l-231stats} that $\omega_{\ell}^{-1}\circ\omega_{f}$ is a bijection from $\mathfrak{S}_{n}(231)$ to itself which sends the number of big descents to the number of peaks, while preserving the number of descents, thus proving Theorem \ref{t-231joint}. We provide an example illustrating this bijection in Figure~\ref{f-231bij}.

Before continuing, let us give one more corollary of Theorem \ref{t-231joint}.
\begin{cor}
For all $n\geq0$, the statistics $\sdes$ and $\lddes$ are equidistributed over $\mathfrak{S}_{n}(231)$.
\end{cor}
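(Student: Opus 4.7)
The plan is to reduce the corollary to the joint equidistribution already established in Theorem \ref{t-231joint}. Specifically, I would express $\sdes$ and $\lddes$ as simple linear combinations of the statistics appearing in that theorem, and then transfer the equidistribution.

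First, I would note the identity $\sdes(\pi) = \des(\pi) - \bdes(\pi)$, which holds for every permutation $\pi$: every descent is either a big descent or a small descent, and the two conditions are mutually exclusive by definition.

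Next, I would establish the analogous identity $\lddes(\pi) = \des(\pi) - \pk(\pi)$, also valid for every $\pi \in \mathfrak{S}_n$. The key observation is that the positions counted by $\pk$ and those counted by $\lddes$ together partition the descent set. A descent at position $k = 1$ is automatically a left double descent (by definition) and cannot be counted as a peak, since peaks are only defined at positions $k \in \{2, \dots, n-1\}$. For a descent at position $k \geq 2$, exactly one of $\pi_{k-1} < \pi_k$ or $\pi_{k-1} > \pi_k$ holds: the first case makes $k$ a peak (and not a left double descent), while the second case makes $k$ a left double descent (and not a peak). Conversely, every peak and every left double descent is a descent. Hence $\des(\pi) = \pk(\pi) + \lddes(\pi)$ for all $\pi$.

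With these two identities in hand, the corollary is immediate. Since $(\bdes, \des)$ and $(\pk, \des)$ have the same joint distribution over $\mathfrak{S}_n(231)$ by Theorem \ref{t-231joint}, applying the coordinatewise map $(k, j) \mapsto (j - k, j)$ shows that $(\sdes, \des)$ and $(\lddes, \des)$ are jointly equidistributed over $\mathfrak{S}_n(231)$, and projecting onto the first coordinate yields the desired equidistribution of $\sdes$ and $\lddes$ (in fact in a stronger, joint form with $\des$). I do not anticipate any serious obstacle; the only nontrivial ingredient is the decomposition $\des = \pk + \lddes$, and even this is a short case analysis that holds universally rather than requiring any property specific to 231-avoidance.
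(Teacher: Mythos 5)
Your proposal is correct and takes essentially the same route as the paper: the paper's proof is the one-line observation that $\des=\bdes+\sdes$ and $\des=\pk+\lddes$ hold for all permutations, after which Theorem \ref{t-231joint} gives the result. Your additional case analysis verifying $\des=\pk+\lddes$ is accurate and simply spells out what the paper leaves implicit.
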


\begin{proof}
Since $\des(\pi)=\bdes(\pi)+\sdes(\pi)$ and $\des(\pi)=\pk(\pi)+\lddes(\pi)$ for all $\pi$, Theorem~\ref{t-231joint} implies the desired result.
\end{proof}
The distribution of $\sdes$ (equivalently, of $\lddes$) over $\mathfrak{S}_{n}(231)$ is given by entry A091869 of the OEIS \cite{oeis}. This sequence counts Dyck paths by the number of $DUD$-factors, which is consistent with Lemma \ref{l-231stats}. Indeed, since $\omega_{f}$ sends descents to $DU$-factors and big descents to $DUU$-factors, small descents correspond to $DU$-factors which are not part of a $DUU$-factor, and so they must be part of a $DUD$-factor.

\subsection{\label{ss-321}The pattern 321}

The following generating function gives the distribution of $\bdes$ over 321-avoiding permutations.

\begin{thm} \label{t-321gf}
We have 
\[
B(t,x;321)=\frac{2}{1-2(1-t)x^{2}+\sqrt{1-4x+4(1-t)x^{2}}}.
\]
\end{thm}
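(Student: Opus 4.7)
The strategy I would pursue combines the direct-sum decomposition of permutations with an auxiliary generating function satisfying a quadratic equation. First, every permutation decomposes uniquely as a direct sum of indecomposable permutations, and 321-avoidance is preserved both by taking direct sums and by extracting indecomposable components. Since the junction between two consecutive summands in a direct-sum decomposition is always an ascent (values on the left are strictly smaller than values on the right), $\bdes$ is additive across direct summands. Consequently, if $H(t, x)$ denotes the big-descent generating function for nonempty indecomposable 321-avoiding permutations, then
\[
B(t, x; 321) = \frac{1}{1 - H(t, x)},
\]
and the task reduces to deriving a formula for $H$.

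The heart of the plan is to establish the system
\[
H(t, x) = x + x^{2} u(t, x)^{2}, \qquad u(t, x) = 1 - (1-t)\, x + x\, u(t, x)^{2}.
\]
The $x$ term in $H$ accounts for the unique length-one indecomposable permutation. For length $n \geq 2$, I would perform a case analysis on the positions $\ell$ of $1$ and $m$ of $n$ in $\pi$, exploiting the following observations: (a) if $m < \ell$, then 321-avoidance forces $n$ and $1$ to be adjacent (i.e., $\ell = m + 1$, since any letter between them would pair with $n$ and $1$ to form a $321$-pattern), and $\pi = A\, n\, 1\, B$ for increasing sequences $A, B$ partitioning $\{2, \dots, n-1\}$; (b) if $m > \ell$, a subtler structure emerges with an increasing prefix $\sigma$ before $1$, a 321-avoiding block $\mu$ between $1$ and $n$, and an increasing suffix $\rho$ after $n$, subject to cross-block compatibility constraints. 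In both situations the permutation decomposes into two components that can be encoded by objects counted by $u$, yielding the factor $u^{2}$. The quadratic for $u$ itself should arise from a parallel recursive decomposition of these auxiliary objects, in which the $-(1-t)x$ correction accounts for a critical adjacency whose status as a small descent (weight $1$) or a big descent (weight $t$) is tracked by the statistic.

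With these equations in hand, the rest is routine algebra. Solving $x u^{2} - u + 1 - (1-t)x = 0$ with $u(t, 0) = 1$ gives
\[
u(t, x) = \frac{1 - \sqrt{1 - 4x + 4(1-t)x^{2}}}{2x}.
\]
Multiplying the quadratic by $x$ yields the identity $x^{2} u^{2} = xu - x + (1-t)x^{2}$, so that $H = x + x^{2} u^{2} = xu + (1-t)x^{2}$. Hence
\[
1 - H = \frac{1 - 2(1-t)x^{2} + \sqrt{1 - 4x + 4(1-t)x^{2}}}{2},
\]
and inverting gives the claimed formula for $B(t, x; 321)$. The main obstacle is the structural analysis in Step~2: the cross-block constraints in case~(b) are delicate, and producing a clean bijection between indecomposable 321-avoiding permutations (of length $\geq 2$) and pairs of $u$-enumerated objects while correctly translating big descents is the technical heart of the argument. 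A promising simplification is to mediate this bijection through the standard correspondence between $\mathfrak{S}_{n}(321)$ and $\mathcal{D}_{n}$, identifying $\bdes$ with a suitable factor statistic on Dyck paths; the first-return decomposition then furnishes the system of equations directly.
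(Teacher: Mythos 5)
Your overall architecture is sound and, in fact, structurally parallel to the paper's proof: the direct-sum decomposition of $321$-avoiding permutations corresponds exactly, under the standard bijection $\chi\colon\mathfrak{S}_n(321)\to\mathcal{D}_n$, to cutting the Dyck path at its returns to the diagonal, so your identity $B=1/(1-H)$ is the permutation-side version of the paper's first-return equation $W=1+\hat{W}W$. Your target system is also correct: one can check that your $u$ equals $(1-(1-t)x)V$, where $V$ is the generating function for Dyck paths by $UDD$-factors, and that $H=xu+(1-t)x^2$ agrees with the paper's $\hat{W}(t^{-1},t,x)$; the coefficients $h_1=1$, $h_2=1$, $h_3=2t$, $h_4=4t+t^2$ match the data, and your closing algebra (solving the quadratic, simplifying $1-H$, inverting) is correct.

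The genuine gap is that the combinatorial heart of the argument --- the derivation of $H=x+x^2u^2$ and $u=1-(1-t)x+xu^2$ --- is asserted rather than proven, as you yourself acknowledge. You never define the ``objects counted by $u$,'' so the claim that an indecomposable $321$-avoiding permutation of length $\ge 2$ decomposes into \emph{two} such objects (giving the factor $u^2$) cannot be checked, and the case analysis on the positions of $1$ and $n$ is left at the level of ``a subtler structure emerges \dots subject to cross-block compatibility constraints.'' Likewise, the statement that the $-(1-t)x$ term ``should arise'' from a critical adjacency being a small versus big descent is exactly the delicate point that needs a proof: the paper handles it by showing (via Lemma \ref{l-321sdes}) that small descents of $\pi$ correspond precisely to $UUDD$-factors of $\chi(\pi)$ starting at level $0$, while descents correspond to $UDD$-factors, and then computing the joint distribution $W(s,t,x)$ and specializing $s=t^{-1}$ using $\bdes=\des-\sdes$. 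Your final remark --- mediate through Dyck paths and use the first-return decomposition --- is essentially the paper's proof, but invoking it does not discharge the obligation to identify which factor statistics on Dyck paths track $\bdes$ and to prove that identification. Until the system of functional equations is justified combinatorially, the argument is a verified-consistent ansatz rather than a proof.
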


See Table \ref{tb-321} for the first ten polynomials $B_{n}(t;321)$.

\begin{table}
\begin{centering}
\begin{tabular}{|c|c|c|c|c|}
\cline{1-2} \cline{2-2} \cline{4-5} \cline{5-5} 
$n$ & $B_{n}(t;321)$ & \quad{} & $n$ & $B_{n}(t;321)$\tabularnewline
\cline{1-2} \cline{2-2} \cline{4-5} \cline{5-5} 
$0$ & $1$ &  & $5$ & $8+26t+8t^{2}$\tabularnewline
\cline{1-2} \cline{2-2} \cline{4-5} \cline{5-5} 
$1$ & $1$ &  & $6$ & $13+72t+45t^{2}+2t^{3}$\tabularnewline
\cline{1-2} \cline{2-2} \cline{4-5} \cline{5-5} 
$2$ & $2$ &  & $7$ & $21+184t+196t^{2}+28t^{3}$\tabularnewline
\cline{1-2} \cline{2-2} \cline{4-5} \cline{5-5} 
$3$ & $3+2t$ &  & $8$ & $34+444t+732t^{2}+214t^{3}+6t^{4}$\tabularnewline
\cline{1-2} \cline{2-2} \cline{4-5} \cline{5-5} 
$4$ & $5+8t+t^{2}$ &  & $9$ & $55+1030t+2454t^{2}+1220t^{3}+103t^{4}$\tabularnewline
\cline{1-2} \cline{2-2} \cline{4-5} \cline{5-5} 
\end{tabular}
\par\end{centering}
\caption{\label{tb-321}Distribution of $\bdes$ over $\mathfrak{S}_{n}(321)$
for $0\leq n\leq9$.}
\end{table}

To prove Theorem \ref{t-321gf} (and the analogous result for the pattern $123$ in the next section), we will make use of a bijection $\chi\colon\mathfrak{S}_{n}(321)\rightarrow\mathcal{D}_{n}$ from 321-avoiding permutations to Dyck paths, which has repeatedly appeared in the literature in equivalent forms---see, e.g., \cite{Baril2017, Elizalde2011, Krattenthaler2001}.
To define this bijection, we first note that any permutation $\pi\in\mathfrak{S}_{n}$ can be displayed diagrammatically by plotting each of the points $(k,\pi_{k})$ in an $n\times n$ grid, where the first coordinate indicates the column and the second indicates the row, with columns numbered $1$ through $n$ from left to right, and 
rows are numbered $1$ through $n$ from bottom to top. The resulting diagram is called the \textit{array} of $\pi$. Then, given $\pi\in\mathfrak{S}_{n}(321)$, define $\chi(\pi)$ to be the Dyck path drawn from the lower-left corner to the upper-right corner of the array of $\pi$, with unit north and east steps, which leaves all of the points $(k,\pi_{k})$ to its right yet remains as close as possible to the main diagonal. See Figure \ref{f-kratbij} for an example illustrating the bijection~$\chi$. 

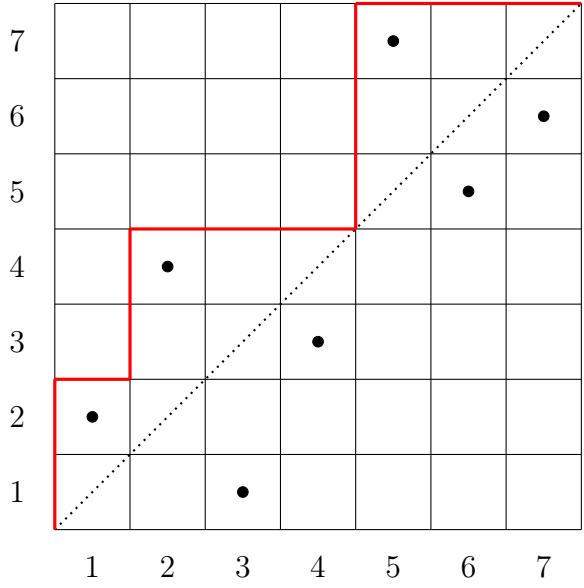
\begin{figure}
\begin{center}
\begin{tikzpicture}         
\foreach \x in {0,1,...,7}             
{                 
\draw (-3, -3 + \x) -- (4, -3 + \x);             
}
\foreach \x in {0,1,...,7}             
{                 
\draw (-3 + \x, -3) -- (-3 + \x, 4);             
}         
\foreach \x in {1,...,7}             
{                 
\draw (-3.5 + \x, -3.5) node {\x};             
}                      
\foreach \x in {1,...,7}             
{                 
\draw (-3.5, -3.5 + \x) node {\x};             
}
        
\filldraw[black]
(-2.5, -1.5) circle [radius=2pt]                    
(-1.5, 0.5) circle [radius=2pt]                    
(-.5, -2.5) circle [radius=2pt]                    
(.5, -.5) circle [radius=2pt]                    
(1.5, 3.5) circle [radius=2pt]                    
(2.5, 1.5) circle [radius=2pt]
(3.5, 2.5) circle [radius=2pt];         
\draw[dotted, thick] (-3, -3) -- (4, 4);
        
\draw[red, very thick] (-3, -3) -- (-3, -1);         
\draw[red, very thick] (-3, -1) -- (-2, -1);         
\draw[red, very thick] (-2, -1) -- (-2, 1);         
\draw[red, very thick] (-2, 1) -- (1, 1);         
\draw[red, very thick] (1, 1) -- (1, 4);                  
\draw[red, very thick] (1, 4) -- (4, 4);              
\end{tikzpicture}
\end{center}
\vspace{-15bp}\caption{\label{f-kratbij}The Dyck path $\chi(2413756)$.}
\end{figure}

Our strategy for proving Theorem \ref{t-321gf} is as follows. First, recall from Section \ref{ss-231} that a descent $k\in[n-1]$ of $\pi\in\mathfrak{S}_{n}$ is a \textit{small descent} of $\pi$ if it is not a big descent of $\pi$, and that $\sdes(\pi)$ denotes the number of small descents of $\pi$. We shall show that the descents of a 321-avoiding permutation $\pi$ correspond to $UDD$-factors of the Dyck path $\chi(\pi)$, and that the small descents of $\pi$ correspond to $UUDD$-factors of $\chi(\pi)$ that begin at level 0. We will then derive a generating function for the joint distribution of $\occ_{UDD}$ and $\occ_{UUDD}^{0}$ over Dyck paths, and specializing this generating function appropriately will prove Theorem \ref{t-321gf}.

\begin{lem} \label{l-321sdes}
Suppose $k\in[n-1]$ is a small descent of $\pi\in\mathfrak{S}_{n}(321)$. Then $\pi_{1}\pi_{2}\dots\pi_{k-1}\in\mathfrak{S}_{k-1}(321)$, $\pi_{k}=k+1$, and $\pi_{k+1}=k$.
\end{lem}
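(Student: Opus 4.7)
The plan is to argue directly from the definition of 321-avoidance by analyzing which values can appear on either side of the small descent. Let $m \coloneqq \pi_{k+1}$, so that $\pi_k = m+1$ since $k$ is a small descent. The target is to show $m = k$, i.e., that these two letters are exactly $k+1$ and $k$, and that everything before them lies in $\{1,2,\dots,k-1\}$.

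First, I would show that every $\pi_i$ with $i < k$ satisfies $\pi_i \le m-1$. Indeed, $\pi_i \ne m$ and $\pi_i \ne m+1$ (those values are already used at positions $k+1$ and $k$), and if $\pi_i > m+1$ then $\pi_i\,\pi_k\,\pi_{k+1}$ would be an occurrence of $321$, contradicting $\pi \in \mathfrak{S}_n(321)$. Since the $k-1$ values $\pi_1,\dots,\pi_{k-1}$ are distinct and all lie in $\{1,\dots,m-1\}$, this already gives $k-1 \le m-1$, hence $k \le m$.

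Next I would establish the reverse inequality by arguing that every element of $\{1,\dots,m-1\}$ must in fact appear among $\pi_1,\dots,\pi_{k-1}$. Suppose some $v \in \{1,\dots,m-1\}$ appears instead at some position $j > k+1$ (it cannot equal $\pi_k=m+1$ or $\pi_{k+1}=m$). Then $\pi_k\,\pi_{k+1}\,\pi_j = (m+1)\,m\,v$ with $v < m$, an occurrence of $321$ — contradiction. Thus $\{1,\dots,m-1\} \subseteq \{\pi_1,\dots,\pi_{k-1}\}$, giving $m-1 \le k-1$, so $m=k$. This immediately yields $\pi_k = k+1$, $\pi_{k+1} = k$, and $\{\pi_1,\dots,\pi_{k-1}\} = \{1,\dots,k-1\}$, so $\pi_1\pi_2\cdots\pi_{k-1}$ is a permutation in $\mathfrak{S}_{k-1}$. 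Finally, this prefix inherits $321$-avoidance from $\pi$, placing it in $\mathfrak{S}_{k-1}(321)$.

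There is no real obstacle here: the whole argument is a two-sided pigeonhole pinned down by forbidding the $321$ pattern once to the left of $\pi_k\pi_{k+1}$ and once to the right. The only thing to be careful about is keeping the two directions of the counting inequality straight, and explicitly noting that the values $m$ and $m+1$ are unavailable for the prefix so that $\pi_i \le m-1$ (rather than merely $\pi_i \le m+1$).
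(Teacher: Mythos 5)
Your proof is correct and takes essentially the same approach as the paper: both arguments pin down the prefix $\pi_1\cdots\pi_{k-1}$ as exactly the set $\{1,\dots,\pi_k-2\}$ by forbidding a $321$ once with a large letter to the left of $\pi_k\pi_{k+1}$ and once with a small letter to the right, then count. Your version just makes the two containments and the resulting equality $k-1=m-1$ slightly more explicit.
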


\begin{proof}
First, notice that none of the letters appearing before $\pi_{k}$ in $\pi$ can be larger than $\pi_{k}$; otherwise, that letter along with $\pi_{k}$ and $\pi_{k+1}$ would be an occurrence of 321. Moreover, all of the letters $1,2,\dots,\pi_{k}-2$ must appear before $\pi_{k}$ in $\pi$. Indeed, if one of these letters appears after $\pi_{k}$, then it must appear after $\pi_{k+1}$, giving us an occurrence of 321. Therefore, the prefix $\pi_{1}\pi_{2}\dots\pi_{k-1}$ of $\pi$ consists precisely of the letters $1,2,\dots,\pi_{k}-2$, which implies $k-1=\pi_{k}-2$ and thus $\pi_{k}=k+1$ and $\pi_{k+1}=k$.
\end{proof}

\begin{lem} \label{l-321dyck}
For all $n\geq0$ and $\pi\in\mathfrak{S}_{n}(321)$, we have\textup{:}
\begin{enumerate}
\item [\normalfont{(a)}] $\des(\pi)=\occ_{UDD}(\chi(\pi))$ and
\item [\normalfont{(b)}] $\sdes(\pi)=\occ_{UUDD}^{0}(\chi(\pi))$.
\end{enumerate}
\end{lem}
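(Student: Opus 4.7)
The plan is to analyze $\chi$ directly by tracking the heights of its east steps. For $\pi\in\mathfrak{S}_n(321)$, let $h_k$ denote the height of $\chi(\pi)$ just after its $k$-th east step. The defining conditions of $\chi$---each point $(k,\pi_k)$ lies weakly below the path, and the path stays as close to the diagonal as possible---force $h_k=\max(\pi_1,\ldots,\pi_k)$. Consequently $\chi(\pi)$ can be written as $U^{a_1}DU^{a_2}D\cdots U^{a_n}D$, where $a_k=h_k-h_{k-1}\geq 0$ is the number of north steps immediately before the $k$-th east step; note that $a_k\geq 1$ exactly when position $k$ is a left-to-right maximum of $\pi$ (call such $k$ \emph{L-positions}, and all others \emph{S-positions}).

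For part (a), the 321-avoiding hypothesis makes both subsequences increasing: the values at L-positions are increasing by definition, and the values at S-positions are also increasing, since an inversion among them together with a preceding L-position would form a 321 pattern. Inspecting the four L/S combinations for consecutive positions then shows that $\pi_k>\pi_{k+1}$ if and only if $k$ is L and $k+1$ is S, which corresponds exactly to $a_k\geq 1$ and $a_{k+1}=0$. In that case the last $U$ of $U^{a_k}$ followed by the $D$s for east steps $k$ and $k+1$ form a $UDD$-factor; conversely, every $UDD$-factor arises uniquely in this way, since its two $D$s must be consecutive east steps (forcing the second to be an S-position) while the preceding $U$ forces the first to be an L-position. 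This proves $\des(\pi)=\occ_{UDD}(\chi(\pi))$.

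For part (b), combine Lemma~\ref{l-321sdes} with the height formula. A small descent at position $k$ forces $\pi_1\cdots\pi_{k-1}$ to be a permutation of $\{1,\ldots,k-1\}$, so $h_{k-1}=k-1$; then $\pi_k=k+1$ and $\pi_{k+1}=k$ give $h_k=k+1$ and $h_{k+1}=k+1$, hence $a_k=2$ and $a_{k+1}=0$. The resulting local pattern is $DUUDD$, in which the initial $D$ is east step $k-1$ ending at the diagonal point $(k-1,k-1)$, so this contributes a $UUDD$-factor at level~$0$. Conversely, a $UUDD$-factor starting at a diagonal point $(m,m)$ forces $h_m=m$, $a_{m+1}=2$, and $a_{m+2}=0$; the first condition gives $\{\pi_1,\ldots,\pi_m\}=\{1,\ldots,m\}$, and the remaining constraints together with the permutation property pin down $\pi_{m+1}=m+2$ and $\pi_{m+2}=m+1$, producing a small descent at position $m+1$. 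The main obstacle is simply verifying the height formula $h_k=\max(\pi_1,\ldots,\pi_k)$; once that is in hand, both parts reduce to a clean translation between positions of $\pi$ and steps of the path.
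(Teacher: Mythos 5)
Your proposal is correct and, for part (b), follows essentially the same route as the paper: both invoke Lemma~\ref{l-321sdes} to pin down the prefix $\{1,\dots,k-1\}$ (hence a return to the diagonal, i.e.\ level $0$) followed by the forced $UUDD$, and both run the converse by reading the prefix and the two forced values back off the path. The only difference is in part (a), which the paper dispatches by citing Baril et al.; your self-contained argument via the height formula $h_k=\max(\pi_1,\dots,\pi_k)$ and the L/S case analysis is a valid (and standard) substitute, with only a harmless imprecision in the $k=1$ edge case of part (b), where there is no east step $k-1$ but the $UUDD$-factor still starts at the diagonal point $(0,0)$.
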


\begin{proof}
Part (a) is already known; see the proof of \cite[Lemma 1]{Baril2017}. To prove (b), it suffices to show that $k\in[n-1]$ is a small descent of $\pi\in\mathfrak{S}_{n}$ if and only if $\chi(\pi)$ has a $UUDD$-factor at level 0 starting at position $2k-1$, i.e., consisting of the $(2k-1)$th through $(2k+2)$th steps of $\chi(\pi)$.

Let $k$ be a small descent of $\pi$. By Lemma \ref{l-321sdes}, we have $\pi_{1}\pi_{2}\dots\pi_{k-1}\in\mathfrak{S}_{k-1}(321)$, $\pi_{k}=k+1$, and $\pi_{k+1}=k$. Since $\pi_{1}\pi_{2}\dots\pi_{k-1}\in\mathfrak{S}_{k-1}(321)$, the first $2k-2$ steps of $\chi(\pi)$ constitute a Dyck path of semilength $k-1$, i.e., $\chi(\pi)$ returns to the diagonal at that point. Furthermore, the conditions $\pi_{k}=k+1$ and $\pi_{k+1}=k$ imply that $\chi(\pi)$ has a $UUDD$-factor immediately afterwards.

Conversely, suppose that $\chi(\pi)$ has a $UUDD$-factor at level 0 starting at position $2k-1$. Since $\chi(\pi)$ returns to the diagonal immediately prior to that factor, if we narrow our view to the $(k-1)\times(k-1)$ subgrid in the lower-left corner, we know that there is a point $(i,\pi_{i})$ in every column $i\in[k-1]$ of that subgrid\textemdash i.e., the prefix $\pi_{1}\pi_{2}\dots\pi_{k-1}$ of $\pi$ consists precisely of the letters $1,2,\dots,k-1$. Having a $UUD$-factor immediately after that return forces $\pi_{k}=k+1$, and since all of the rows in our grid up to this point are occupied except for row $k$, the following down step forces $\pi_{k+1}=k$. Hence, $k$ is a small descent of $\pi$.
\end{proof}

Let us define
\[
W\coloneqq W(s,t,x)=\sum_{n=0}^{\infty}\sum_{\mu\in\mathcal{D}_{n}}s^{\occ_{UUDD}^{0}(\mu)}t^{\occ_{UDD}(\mu)}x^{n}
\]
to be the ordinary generating function giving the joint distribution of $\occ_{UUDD}^{0}$ and $\occ_{UDD}$ over Dyck paths. To derive a formula for $W$, we will need two auxiliary generating functions. Let $\hat{\mathcal{D}}_{n}$ denote the set of \textit{irreducible} Dyck paths of semilength $n$, i.e., Dyck paths with only one return. Define
\begin{align*}
\hat{W} & \coloneqq\hat{W}(s,t,x)=\sum_{n=1}^{\infty}\sum_{\mu\in\mathcal{\hat{D}}_{n}}s^{\occ_{UUDD}^{0}(\mu)}t^{\occ_{UDD}(\mu)}x^{n},\\
V & \coloneqq W(1,t,x)=\sum_{n=0}^{\infty}\sum_{\mu\in\mathcal{D}_{n}}t^{\occ_{UDD}(\mu)}x^{n}.
\end{align*}
It is known \cite{Sapounakis2006} that $V$ satisfies the functional equation $x((t-1)x+1)V^{2}-V+1=0$, which upon solving yields the formula 
\begin{equation}
V=\frac{1-\sqrt{1-4x+4(1-t)x^{2}}}{2x(1-(1-t)x)}.\label{e-321V}
\end{equation}

\begin{lem} \label{l-Whatgf}
We have
\begin{align*}
\hat{W} & =\frac{1-\sqrt{1-4x+4(1-t)x^{2}}}{2}+t(s-1)x^{2}.
\end{align*}
\end{lem}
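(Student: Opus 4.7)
The plan is to reduce the bivariate generating function $\hat{W}$ to the univariate generating function $\hat{V} \coloneqq \hat{W}(1,t,x)$ by exploiting the fact that the statistic $\occ_{UUDD}^{0}$ is highly degenerate on irreducible Dyck paths. My first step would be to prove the structural claim that if $\hat{\mu}$ is any irreducible Dyck path, then $\occ_{UUDD}^{0}(\hat{\mu}) \in \{0,1\}$, and equality with $1$ forces $\hat{\mu} = UUDD$. The reason is that $\hat{\mu}$ begins with $U$ at level~$0$, so any level-$0$ occurrence of $UUDD$ in $\hat{\mu}$ must start at position~$1$, meaning $\hat{\mu}$ begins with the prefix $UUDD$. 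But this prefix already returns to level~$0$ after four steps, and irreducibility (only one return to the $x$-axis, at the end) forces $\hat{\mu} = UUDD$.

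The structural claim immediately yields
\[
\hat{W} = \hat{V} + t(s-1)x^{2},
\]
because the only irreducible path on which the $s$-weight differs from $1$ is $UUDD$ itself, which contributes $stx^{2}$ to $\hat{W}$ in place of the $tx^{2}$ it contributes to $\hat{V}$.

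To finish, I would express $\hat{V}$ in terms of the generating function $V$ given by~\eqref{e-321V}. Every Dyck path decomposes uniquely as a concatenation of irreducible Dyck paths, and no $UDD$-factor can straddle the boundary between two consecutive irreducibles (the junction has the form $\cdots D U \cdots$, and a window of three consecutive steps straddling this junction is of the form $?DU$ or $DU?$, neither of which equals $UDD$). Hence $\occ_{UDD}$ is additive on concatenations, producing the standard product-type identity $V = 1/(1-\hat{V})$, i.e.\ $\hat{V} = 1 - 1/V$. Rationalizing $1/V$ using~\eqref{e-321V} and simplifying gives
\[
\hat{V} = \frac{1 - \sqrt{1-4x+4(1-t)x^{2}}}{2},
\]
which combines with the previous display to yield the stated formula for $\hat{W}$.

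The only delicate point in the argument is the initial structural claim pinning down the paths with $\occ_{UUDD}^{0}(\hat{\mu}) = 1$; after that, the proof amounts to a short generating function manipulation based on the known formula for $V$.
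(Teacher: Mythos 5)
Your proof is correct, and it takes a genuinely different route from the paper's. The paper works through the first return decomposition $\mu=U\mu_{1}D$ of an irreducible path, splitting into four cases ($\mu_{1}$ empty, $\mu_{1}=UD$, $\mu_{1}$ ending in $UD$, $\mu_{1}$ not ending in $UD$) to track how $UDD$-factors propagate, arriving at the functional equation $\hat{W}=x+stx^{2}+tx^{2}(V-1)+x(V-1-xV)$ and then substituting the known formula for $V$. You instead isolate the $s$-dependence up front via the structural observation that an irreducible path has a level-$0$ $UUDD$-factor only if it \emph{is} $UUDD$ (which is correct: the only interior lattice points of an irreducible path on the $x$-axis are its endpoints, so such a factor must occupy steps $1$ through $4$, and irreducibility then forces the path to end there), giving $\hat{W}=\hat{V}+t(s-1)x^{2}$; you then get $\hat{V}=1-1/V$ from additivity of $\occ_{UDD}$ over the concatenation into irreducible components, which you correctly justify by noting that $UDD$ contains no $DU$ as consecutive letters and hence cannot straddle a return to the axis. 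Rationalizing $1/V$ against \eqref{e-321V} indeed yields $\hat{V}=\tfrac{1}{2}\bigl(1-\sqrt{1-4x+4(1-t)x^{2}}\bigr)$, matching the claimed formula. Your version buys a cleaner separation of concerns (the $s$-variable is dispatched by a one-line combinatorial fact, and no case analysis on the suffix of $\mu_{1}$ is needed), at the cost of invoking the full decomposition into irreducibles rather than a single first return; both approaches ultimately rest on the same external input, namely the formula \eqref{e-321V} for $V$.
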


\begin{proof}
First, we prove the equation 
\begin{equation}
\hat{W}=x+stx^{2}+tx^{2}(V-1)+x(V-1-xV).\label{e-feq321}
\end{equation}
Let $\mu$ be an irreducible path, so that $\mu=U\mu_{1}D$ for some Dyck path $\mu_{1}$. If $\mu_{1}$ is empty, then $\mu=UD$, which contributes $x$ to \eqref{e-feq321}. Similarly, if $\mu_{1}=UD$, then $\mu=UUDD$, which contributes $stx^{2}$ to \eqref{e-feq321}. Otherwise, if $\mu_{1}$ has semilength greater than 1, then $\mu$ has no $UUDD$-factors at level 0, and we split the analysis into two subcases:
\begin{itemize}
\item Suppose $\mu_{1}$ ends with $UD$. Then let us write $\mu_{1}=\mu_{2}UD$ where $\mu_{2}$ is a nonempty Dyck path, so that $\mu=U\mu_{2}UDD$. Paths of this type contribute $tx^{2}(V-1)$ to \eqref{e-feq321}, as $V-1$ is the generating function for nonempty Dyck paths by $UDD$-factors, and we multiply by $tx^{2}$ to account for the additional $UDD$-factor and the difference in semilength.
\item Suppose $\mu_{1}$ does not end with $UD$. Then $\mu$ and $\mu_{1}$ have the same number of $UDD$-factors, and $\mu$ has semilength one greater than $\mu_{1}$. Note that $V-1-xV$ is the generating function for nonempty Dyck paths that do not end with a $UD$, by $UDD$-factors. Multiplying by $x$ accounts for the difference in semilength, and so we have the final $x(V-1-xV)$ term in \eqref{e-feq321}.
\end{itemize}
Hence, \eqref{e-feq321} is proven. The desired formula for $\hat{W}$ is obtained from \eqref{e-feq321} by substituting in \eqref{e-321V} and simplifying.
\end{proof}

\begin{prop} \label{p-Wgf}
We have 
\[
W=\frac{2}{1+2(1-s)tx^{2}+\sqrt{1-4x+4(1-t)x^{2}}}.
\]
\end{prop}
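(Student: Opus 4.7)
The plan is to exploit the standard arch decomposition of Dyck paths: every nonempty Dyck path factors uniquely as a concatenation of irreducible Dyck paths. If one can show that both statistics tracked by $W$ are additive over this decomposition, then standard symbolic methods give $W = 1/(1-\hat{W})$, and the explicit formula for $\hat{W}$ from Lemma~\ref{l-Whatgf} reduces everything to algebra.

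The first step is to verify the additivity. For $\occ_{UDD}$, at the boundary between consecutive irreducible arches the last step of one arch is $D$ and the first step of the next is $U$, so no $UDD$-factor can straddle a boundary; hence $\occ_{UDD}$ is additive over the arch decomposition. For $\occ_{UUDD}^{0}$, observe that a $UUDD$-factor starting at level $0$ must begin at the first step of some arch; moreover, since it returns to level $0$ after four steps, that arch must equal $UUDD$ itself. Thus an occurrence of $UUDD$ at level $0$ corresponds precisely to an arch of the form $UUDD$ in the decomposition, and this statistic is also additive. From these two observations, together with the fact that semilengths add,
\[
W = \sum_{k\geq 0} \hat{W}^{k} = \frac{1}{1-\hat{W}}.
\]

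The remaining work is purely computational. Substituting the expression from Lemma~\ref{l-Whatgf},
\[
1-\hat{W} = 1 - \frac{1-\sqrt{1-4x+4(1-t)x^{2}}}{2} - t(s-1)x^{2} = \frac{1+\sqrt{1-4x+4(1-t)x^{2}}}{2} + t(1-s)x^{2},
\]
so taking the reciprocal and clearing the factor of $2$ yields
\[
W = \frac{2}{1 + 2(1-s)tx^{2} + \sqrt{1-4x+4(1-t)x^{2}}},
\]
as claimed.

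The only step that requires any care is the additivity argument for $\occ_{UUDD}^{0}$, since one must rule out the possibility that such a factor could be spread across two arches or sit inside a larger arch. The observation that a height-$0$ starting point forces the factor to coincide with an arch of semilength two handles this cleanly, and the rest of the proof is mechanical.
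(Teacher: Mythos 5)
Your proof is correct and follows essentially the same route as the paper: the arch decomposition you use is just the iterated form of the first return decomposition $W=1+\hat{W}W$ that the paper invokes, and both yield $W=(1-\hat{W})^{-1}$ followed by the same algebra. Your explicit verification that $\occ_{UDD}$ and $\occ_{UUDD}^{0}$ are additive over the arches (in particular, that a level-$0$ occurrence of $UUDD$ must coincide with an arch of semilength two) is a detail the paper leaves implicit, and it is argued correctly.
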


\begin{proof}
The first return decomposition gives $W=1+\hat{W}W$, hence $W=(1-\hat{W})^{-1}$. Substituting in the formula for $\hat{W}$ given by Lemma \ref{l-Whatgf} and simplifying yields the desired formula for $W$.
\end{proof}

We are now ready to prove Theorem \ref{t-321gf}.

\begin{proof}[Proof of Theorem \ref{t-321gf}]
By Lemma \ref{l-321dyck}, we have 
\[
W=\sum_{n=0}^{\infty}\sum_{\pi\in\mathfrak{S}_{n}(321)}s^{\sdes(\pi)}t^{\des(\pi)}x^{n}.
\]
Since $\bdes(\pi)=\des(\pi)-\sdes(\pi)$ for all permutations $\pi$, the formula
\[
B(t,x;321)=W(t^{-1},t,x)=\frac{2}{1-2(1-t)x^{2}+\sqrt{1-4x+4(1-t)x^{2}}}
\]
follows from Proposition \ref{p-Wgf}.
\end{proof}

\subsection{\label{ss-123}The pattern 123}

Lastly, we have the following formula for the pattern $123$.

\begin{thm} \label{t-123}
For all $n,k\geq0$, we have 
\[
b_{n,k}(123)=\frac{2}{n+1}{n+1 \choose k+2}{n-2 \choose k}.
\]
\end{thm}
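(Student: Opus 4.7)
The plan is to attack this theorem via generating functions and auxiliary statistics, since (as the introduction warns) there is no simple bijective proof analogous to those used for the patterns $231$ and $321$. My starting point would be the bijection $\chi$ from Section~\ref{ss-321}: composing with the complement map yields a bijection $\mathfrak{S}_n(123) \to \mathcal{D}_n$, under which descents of $\pi$ correspond nicely to some local factor of the associated Dyck path (by the reverse-complement symmetry that swaps $123$ and $321$ and preserves $\des$ and $\bdes$). However, the big descent statistic does not decompose as cleanly on the Dyck path side as it does for $321$-avoiders, because the ``consecutive value'' condition $\pi_k=\pi_{k+1}+1$ depends on more than one local feature of the path.

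To overcome this, I would introduce two auxiliary generating functions — presumably the $F$ and $G$ that the paper flags via \texttt{lemG} and \texttt{lemFG} — refining $B(t,x;123)$ by tracking one or two additional statistics, such as the value at the final position, whether the permutation ends in a descent, or the position of the first left-to-right minimum. For each of these refined generating functions, I would write down functional equations by partitioning $\mathfrak{S}_n(123)$ according to the location of the largest letter $n$ (or equivalently the first left-to-right minimum), much as in the proof of Theorem~\ref{t-132gf}, but now keeping careful bookkeeping of how the big descent count at the ``seam'' between the two sub-permutations depends on the chosen auxiliary data.

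Next, after solving this system for $B(t,x;123)$, I would translate the generating function to a combinatorial object that makes coefficient extraction tractable — following the introduction, this would be a family of colored Motzkin paths. Concretely, I would interpret the solved generating function as enumerating Motzkin paths of length $n-2$ in which up/down/horizontal steps carry colors encoding contributions to $\bdes$, and then apply the cycle lemma or a Lagrange-type computation to arrive at the closed form $\frac{2}{n+1}\binom{n+1}{k+2}\binom{n-2}{k}$. An alternative endgame, useful as a sanity check, is to verify that both $b_{n,k}(123)$ and the claimed formula satisfy the same linear recurrence in $n$ (with $k$ fixed) together with matching initial values.

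The main obstacle I expect is at the second step: choosing auxiliary statistics whose joint distribution with $\bdes$ admits a closed functional-equation system of manageable size. A naive refinement by ``last letter'' or ``value preceding $n$'' tends to blow up into an infinite system of equations, so one has to find the right coarse invariant — likely something tied to the Dyck path's behavior near its last return — to collapse the system down to a finite number of equations. Once the right invariant is identified, the remaining generating-function manipulations and the passage through colored Motzkin paths should be lengthy but routine.
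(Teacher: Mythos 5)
There is a genuine gap here: your write-up is a plan that matches the high-level description in the introduction (Dyck paths, auxiliary statistics, colored Motzkin paths), but it leaves unresolved exactly the step that carries all the difficulty, and the route you propose for that step is not the one that works. You suggest deriving functional equations for $B(t,x;123)$ by decomposing a $123$-avoiding permutation around the position of its largest letter, \`a la Theorem~\ref{t-132gf}, with some auxiliary statistic controlling the seam. As you yourself note, the seam condition $\pi_{k}=\pi_{k+1}+1$ is not determined by any bounded amount of boundary data for this pattern, and you do not identify the ``right coarse invariant'' that would close the system; without it, the proposal does not constitute a proof. The paper avoids this entirely: it never writes a functional equation on the permutation side for $123$. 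Instead it reverses to $321$-avoiders (turning big descents into big ascents), splits big ascents into \emph{high} and \emph{low} ones according to whether $k+1$ is a weak excedance, and shows (Lemma~\ref{l-123dyck}) that these two refined statistics are read off the Dyck path $\chi(\pi^{r})$ as, respectively, peaks not preceded by another peak and a condition on consecutive \emph{non-peak} $D$ steps versus non-consecutive non-peak $U$ steps. This translation, which exploits the increasing-subsequence structure of weak excedances in $321$-avoiders, is the missing idea in your proposal.

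The second missing ingredient is how the resulting three-statistic generating function $F(u,v,w,x)$ over Dyck paths is actually computed. The paper does this by a ``core'' decomposition (Lemma~\ref{lem:FG}): deleting all peaks of a Dyck path leaves a smaller Dyck path, and every path with a given core is obtained by inserting runs of peaks at its vertices subject to simple constraints; bookkeeping these insertions expresses $F$ as a substitution into a two-variable generating function $G(s,t,z)$ tracking $\pk$ and a consecutivity statistic $\con$ on the core. Only at that final stage do colored ($2$-)Motzkin paths appear, via a bijection used to compute $G$ (Lemma~\ref{lem:G}); there is no cycle-lemma or Lagrange-inversion step, as the closed form is recognized directly from the resulting algebraic generating function. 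So while your outline gestures at the right cast of characters, the two lemmas that make the argument go through---the high/low big ascent dichotomy on the Dyck path side and the peak-insertion core decomposition---are absent, and the substitute you propose (a seam-tracking functional equation on $\mathfrak{S}_{n}(123)$) would, by your own admission, not close up.
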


The first ten polynomials $B_n(t; 123)$ are shown in Table \ref{tb-123}.

\begin{table}
\begin{centering}
\begin{tabular}{|c|c|c|c|c|}
\cline{1-2} \cline{2-2} \cline{4-5} \cline{5-5} 
$n$ & $B_{n}(t;123)$ & \quad{} & $n$ & $B_{n}(t;123)$\tabularnewline
\cline{1-2} \cline{2-2} \cline{4-5} \cline{5-5} 
$0$ & $1$ &  & $5$ & $5+20t+15t^{2}+2t^{3}$\tabularnewline
\cline{1-2} \cline{2-2} \cline{4-5} \cline{5-5} 
$1$ & $1$ &  & $6$ & $6+40t+60t^{2}+24t^{3}+2t^{4}$\tabularnewline
\cline{1-2} \cline{2-2} \cline{4-5} \cline{5-5} 
$2$ & $2$ &  & $7$ & $7+70t+175t^{2}+140t^{3}+35t^{4}+2t^{5}$\tabularnewline
\cline{1-2} \cline{2-2} \cline{4-5} \cline{5-5} 
$3$ & $3+2t$ &  & $8$ & $8+112t+420t^{2}+560t^{3}+280t^{4}+48t^{5}+2t^{6}$\tabularnewline
\cline{1-2} \cline{2-2} \cline{4-5} \cline{5-5} 
$4$ & $4+8t+2t^{2}$ &  & $9$ & $9+168t+882t^{2}+1764t^{3}+1470t^{4}+504t^{5}+63t^{6}+2t^{7}$\tabularnewline
\cline{1-2} \cline{2-2} \cline{4-5} \cline{5-5} 
\end{tabular}
\par\end{centering}
\caption{\label{tb-123}Distribution of $\bdes$ over $\mathfrak{S}_{n}(123)$ for $0\leq n\leq9$.}
\end{table}

As we shall see, our proof of Theorem \ref{t-123} is far more intricate than those of our previous results. Our approach, however, will allow us to simultaneously prove a related result, which features a surprising appearance of the Narayana numbers. Given a permutation $\pi\in\mathfrak{S}_{n}$, we say that $k\in[n]$ is a \textit{right big descent} of $\pi$ if either $k$ is a big descent of $\pi$, or if $k=n$ and $\pi_k>1$. (In other words, right big descents of $\pi$ are big descents of the word $\pi0$ obtained by appending $0$ to $\pi$.) Let $\rbdes(\pi)$ denote the number of right big descents of $\pi$.

\begin{thm} \label{t-123-nar}
For all $n,k\geq0$, the number of permutations in $\mathfrak{S}_{n}(123)$ with $\rbdes(\pi)=k$ is the Narayana number
\[
N_{n,k}\coloneqq\frac{1}{k+1}{n-1 \choose k}{n \choose k}.
\]
\end{thm}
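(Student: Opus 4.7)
The plan is to establish the Narayana distribution of $\rbdes$ over $\mathfrak{S}_n(123)$ by constructing a bijection $\Phi \colon \mathfrak{S}_n(123) \to \mathcal{D}_n$ that translates the statistic $\rbdes(\pi) + 1$ into the number of peaks (i.e., $UD$-factors) of the associated Dyck path. Since it is classical that the Narayana number $N_{n,k}$ counts Dyck paths of semilength $n$ with exactly $k+1$ peaks, such a bijection would immediately yield the claim.

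My first attempt would exploit the well-known decomposition of $\pi \in \mathfrak{S}_n(123)$ into two decreasing subsequences: its sequence of left-to-right minima and the sequence of the remaining entries. From this pair, I would try to read off a Dyck path, for instance by placing a $U$-step whenever $\pi_k$ is a left-to-right minimum, a $D$-step otherwise, and then massaging the resulting word into a genuine Dyck path in $\mathcal{D}_n$. A second candidate would be to compose Krattenthaler's bijection $\chi$ of Section~\ref{ss-321} with the complement: since $\pi \in \mathfrak{S}_n(123)$ iff $\pi^c \in \mathfrak{S}_n(321)$, one obtains $\chi(\pi^c) \in \mathcal{D}_n$. Under complementation, a big descent of $\pi$ becomes an ascent of $\pi^c$ whose gap is at least $2$; the tail condition $\pi_n > 1$ becomes $(\pi^c)_n < n$. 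I would then trace how these "big ascents" of $\pi^c$, together with the boundary condition, correspond to a specific factor statistic on $\chi(\pi^c)$, hoping to match it with the number of peaks.

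Should a direct bijective proof prove elusive, my backup is generating function in style: using the tools developed for Theorem~\ref{t-123} (colored Motzkin paths, auxiliary statistics, and generating function operations), compute the joint distribution of $\bdes$ and the indicator $[\pi_n = 1]$ over $\mathfrak{S}_n(123)$. Since $\rbdes(\pi) = \bdes(\pi) + [\pi_n \neq 1]$, this determines the $\rbdes$ generating function, which I would then match algebraically with the Narayana generating function
\[
\sum_{k \geq 0} N_{n,k} t^k = \sum_{k \geq 0} \frac{1}{k+1}\binom{n-1}{k}\binom{n}{k} t^k,
\]
for example by showing both sides satisfy the same functional equation in an appropriate bivariate generating function.

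The main obstacle, I expect, is that peaks in a Dyck path are a local condition while big descents in a permutation are inherently non-local: the pair $(\pi_k, \pi_{k+1})$ forms a big descent precisely when all of the intermediate values $\pi_{k+1}+1, \ldots, \pi_k-1$ are absent from positions $k, k+1$ and placed elsewhere in $\pi$. Any successful bijection must convert this non-local "gap" condition into the local $UD$ condition, and the cleverness of the proof will lie in arranging the 123-avoiding structure so that this translation is exact. This difficulty is likely the reason the authors remark that no simple bijective proof is known to them.
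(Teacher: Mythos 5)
There is a genuine gap here: your proposal is a research plan rather than a proof, and neither of its two routes is carried out. Your primary route---a bijection $\Phi\colon\mathfrak{S}_{n}(123)\to\mathcal{D}_{n}$ sending $\rbdes(\pi)+1$ to the number of peaks---is precisely what the paper poses as an \emph{open problem} (Problem~\ref{p-bij}); the authors state explicitly that they do not know such a bijection. Your two candidate constructions are not developed far enough to evaluate: the ``place a $U$ at each left-to-right minimum, $D$ otherwise, then massage'' idea does not obviously produce a path in $\mathcal{D}_{n}$, let alone one whose peak count equals $\rbdes(\pi)+1$, and you give no argument for either claim. Your second candidate ($\chi$ composed with complementation, so that big descents become big ascents of a $321$-avoider) is essentially the paper's starting point (the paper uses reversal rather than complementation, which is immaterial), but the hard work all lies after that step, and none of it appears in your proposal.

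Concretely, what is missing is the entire mechanism by which the non-local big-ascent condition is read off the Dyck path $\chi(\pi^{r})$. The paper must (i) split big ascents into \emph{high} and \emph{low} types according to whether $k+1$ is a weak excedance, since these are detected by completely different local features of the path (Lemma~\ref{l-123dyck}); (ii) observe that low big ascents correspond to pairs of blue $D$ steps that are consecutive while the matching blue $U$ steps are not---a statistic on the ``core'' of the path obtained by deleting all peaks; (iii) set up a transfer (Lemma~\ref{lem:FG}) expressing the four-variable generating function $F(u,v,w,x)$ over all Dyck paths in terms of a two-statistic generating function $G(s,t,z)$ over cores, via a peak-insertion argument with several case distinctions; and (iv) compute $G$ itself by a bijection with $2$-Motzkin paths (Lemma~\ref{lem:G}). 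Your backup plan compresses all of this into ``compute the joint distribution of $\bdes$ and $[\pi_{n}\neq1]$,'' which is exactly the part that requires the new ideas. The identity $\rbdes(\pi)=\bdes(\pi)+[\pi_{n}\neq1]$ and the observation that $N_{n,k}$ counts Dyck paths by peaks are both correct, but they reduce the theorem to a computation you have not indicated how to perform. As written, the proposal establishes nothing beyond correctly diagnosing why the problem is hard.
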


Given $\pi\in\mathfrak{S}_n$, we say that $k\in[n-1]$ is a {\em big ascent} of $\pi$ if $\pi_k+1<\pi_{k+1}$, and that $k\in\{0,1,\dots,n-1\}$ is a {\em left big ascent} of $\pi$ if either $k$ is a big ascent of $\pi$, or if $k=0$ and $\pi_1>1$. By taking reversals, we see that the distribution of (right) big descents over $\mathfrak{S}_{n}(123)$ is the same as that of (left) big ascents over $\mathfrak{S}_{n}(321)$, and it will be more convenient for us to study the latter.

Recall the bijection $\chi\colon \mathfrak{S}_n(321) \rightarrow \mathcal{D}_n$ from Section~\ref{ss-321}. Using this bijection, we seek to translate big ascents of $321$-avoiding permutations into Dyck path statistics. For convenience, we call $UD$-factors of a Dyck path {\em peaks} (not to be confused with peaks of a permutation).

We say that $k\in[n]$ is a {\em weak excedance} of $\pi\in\mathfrak{S}_n$ if $\pi_k\ge k$. It is well known that a permutation $\pi\in\mathfrak{S}_n$ is 321-avoiding if and only if the subsequence of letters corresponding to weak excedances in $\pi$ and the subsequence of remaining letters are both increasing. In terms of the array of $\pi$, the weak excedances correspond to the points above the main diagonal, and the remaining letters correspond to points below the main diagonal. 

Now, let $\pi\in\mathfrak{S}_n(321)$ and let $\mu = \chi(\pi)$ be the corresponding Dyck path. To each point $(k,\pi_k)$ of the permutation array, we associate two steps of $\mu$: the $D$ step in column $k$ and the $U$ step in row $\pi_k$. Then $k\in[n]$ is a weak excedance of $\pi\in\mathfrak{S}_n(321)$ if and only if the associated steps form a peak. For example, see the permutation array and the corresponding Dyck path in Figure \ref{f-kratbij}; there, the letters corresponding to weak excedances are $\pi_1 = 2$, $\pi_2 = 4$, and $\pi_5 = 7$, which indeed form an increasing subsequence corresponding to the peaks of $\mu$.

We can describe the inverse of $\chi$ as follows. Given a Dyck path $\mu$ from the lower-left corner to the upper-right corner of an $n\times n$ grid, start by placing a point in the cell inside each peak $UD$---these points are the weak excedances of the permutation. Let $r$ be the number of peaks of $\mu$, and let $i_1<i_2<\dots<i_{n-r}$ and $j_1<j_2<\dots<j_{n-r}$ be the indices of the rows and columns, respectively, where no point has been placed yet. Now, place points in $(i_1,j_1), (i_2,j_2),\dots,(i_{n-r},j_{n-r})$. The resulting points form the array of the permutation $\pi=\chi^{-1}(\mu)$. Note that $i_1,i_2,\dots,i_{n-r}$ are precisely the indices in $[n]$ that are not weak excedances of $\pi$.

To translate the big ascents of $\pi$ in terms of $\mu$, we will refine such ascents into two types. We say that a big ascent $k$ of $\pi$ is {\em high} if $k+1$ is a weak excedance of $\pi$, and that $k$ is {\em low} otherwise. Denote the number of high and low big ascents of $\pi$ by $\hibasc(\pi)$ and $\lobasc(\pi)$, respectively. Continuing the example in Figure \ref{f-kratbij}, the high big ascents of the permutation $\pi=2413756$ are $1$ and $4$, whereas $3$ is the only low big ascent, so $\hibasc(\pi)=2$ and $\lobasc(\pi)=1$.

High big ascents of $\pi$ are easy to read from $\mu$. Indeed, $k\in[n-1]$ is a high big ascent of $\pi$ if and only if the peak of $\mu$ corresponding to the weak excedance $k+1$ is not immediately preceded by another peak. Define $\hibasc(\mu)$ to be the number of peaks of $\mu$, other than the first one, that are not immediately preceded by another peak. Then $\hibasc(\pi)=\hibasc(\mu)$.

It is also easy to read from $\mu$ whether $k=0$ is a left big ascent of $\pi$, that is, if $\pi_1>1$. This happens precisely when the first peak of $\mu$ is preceded by a $U$ (equivalently, when $\mu$ starts with $UU$). Define $\ini_{UU}(\mu)$ by
\[
\ini_{UU}(\mu)\coloneqq\begin{cases}
1, & \text{if }\mu\text{ starts with }UU,\\
0, & \text{otherwise}.
\end{cases}
\]

Low big ascents of $\pi$ are harder to read from $\mu$. To do this, first color the steps of $\mu$ as follows: color in red the $U$ and $D$ steps that belong to peaks, and color the remaining steps in blue. Among the points in the array of $\pi$, the ones corresponding to weak excedances have both associated steps red, and all other points have both associated steps blue. More precisely, if $i_1<i_2<\dots<i_{n-r}$ are the indices in $[n]$ that are not weak excedances of $\pi$, then for each $\ell\in[n-r]$, the point $(i_\ell,\pi_{i_\ell})$ is associated to the $\ell$th blue $D$ step and the $\ell$th blue $U$ step of $\mu$.

By definition, $k\in[n-1]$ is a low big ascent if and only if $\pi_k+1<\pi_{k+1}<k+1$. In particular, neither $k$ nor $k+1$ are weak excedances in this case, so the steps associated to the points $(k,\pi_k)$ and $(k+1,\pi_{k+1})$ are all blue.
Moreover, the $D$ steps associated to these two points must be consecutive, since they are in adjacent columns, and so having $U$ steps in between would cause the second of the $D$ steps to be red. On the other hand, the $U$ steps associated to these two points cannot be consecutive, since they are in rows $\pi_k$ and $\pi_{k+1}$, which are nonadjacent. 

We deduce that the number of low big ascents of $\pi$ is equal to the number of indices $\ell\in[n-r-1]$ such that the 
$\ell$th and $(\ell+1)$st blue $D$ steps of $\mu$ are consecutive, but the $\ell$th and $(\ell+1)$st blue $U$ steps are not. Denote the number of such indices by $\lobasc(\mu)$.

So far, we have established the following lemma.

\begin{lem} \label{l-123dyck}
For all $n\geq0$ and $\pi\in\mathfrak{S}_{n}(123)$, we have\textup{:}
\begin{enumerate}
\item [\normalfont{(a)}] $\bdes(\pi)=(\hibasc+\lobasc)(\chi(\pi^{r}))$
and
\item [\normalfont{(b)}] $\rbdes(\pi)=(\hibasc+\lobasc+\ini_{UU})(\chi(\pi^{r}))$.
\end{enumerate}
\end{lem}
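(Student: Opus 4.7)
The plan is to reduce to the $321$-avoiding case via reversal and then assemble the observations from the preceding paragraphs into a formal proof. Since $\pi$ is $123$-avoiding if and only if $\sigma\coloneqq\pi^{r}$ is $321$-avoiding, and since the map $k\mapsto n-k$ sends big descents of $\pi$ to big ascents of $\sigma$ while the condition $\pi_{n}>1$ translates into $\sigma_{1}>1$, both parts of the lemma reduce to the following claim: for $\sigma\in\mathfrak{S}_{n}(321)$, the number of big ascents of $\sigma$ equals $\hibasc(\chi(\sigma))+\lobasc(\chi(\sigma))$, and this quantity plus the indicator $[\sigma_{1}>1]$ equals $\hibasc(\chi(\sigma))+\lobasc(\chi(\sigma))+\ini_{UU}(\chi(\sigma))$.

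Next, I would split the big ascents of $\sigma$ into \emph{high} and \emph{low} types and handle each separately. The key observation is that a big ascent $k$ with $\sigma_{k}\ge k$ forces $\sigma_{k+1}>k+1$, so low big ascents (those where $k+1$ is not a weak excedance) automatically have $k$ not a weak excedance either. For high big ascents, a weak excedance at $k+1\ge 2$ corresponds via $\chi$ to a peak of $\chi(\sigma)$ other than the first; this peak is immediately preceded by another peak if and only if $k$ is also a weak excedance with $\sigma_{k}=\sigma_{k+1}-1$, equivalently, if and only if $k$ is not a big ascent. Therefore high big ascents correspond bijectively to peaks of $\chi(\sigma)$ (other than the first one) that are not immediately preceded by a peak, giving the count $\hibasc(\chi(\sigma))$.

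For the low big ascents, I would color the peak steps of $\chi(\sigma)$ red and the remaining steps blue, and use the explicit description of $\chi^{-1}$: if $i_{1}<i_{2}<\dots<i_{n-r}$ are the non-weak-excedance positions of $\sigma$, then $i_{\ell}$ is assigned to the $\ell$th blue $D$ step (in column $i_{\ell}$) and the $\ell$th blue $U$ step (in row $\sigma_{i_{\ell}}$). At a low big ascent $k$, positions $k$ and $k+1$ are consecutive non-weak-excedance positions, so their blue $D$ steps are necessarily consecutive (a peak between them would force the second $D$ to be red), while their blue $U$ steps are non-consecutive if and only if $\sigma_{k+1}>\sigma_{k}+1$, which is the big ascent condition. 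Hence low big ascents are counted exactly by $\lobasc(\chi(\sigma))$, completing part (a). For part (b), the extra term $\ini_{UU}(\chi(\sigma))$ records the indicator $[\sigma_{1}>1]$, which holds precisely when position $1$ is a weak excedance of value greater than $1$, equivalently when the first $U$ step of $\chi(\sigma)$ is not part of the first peak, i.e., when $\chi(\sigma)$ begins with $UU$. The main obstacle is the low big ascent case, where one must carefully justify the consecutivity of the blue $D$ steps and the non-consecutivity of the blue $U$ steps assigned to positions $k,k+1$; the high big ascent case and the $\ini_{UU}$ contribution are then essentially immediate from the structure of $\chi$.
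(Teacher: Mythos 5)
Your proposal is correct and takes essentially the same approach as the paper: the lemma is ``proved'' there by the discussion immediately preceding its statement, which likewise reduces to $321$-avoiders via reversal, identifies weak excedances with peaks of $\chi$, splits big ascents into high and low types, and reads off $\hibasc$, $\lobasc$, and $\ini_{UU}$ exactly as you do. Your justification of the low-big-ascent case (consecutive blue $D$ steps, non-consecutive blue $U$ steps) is in fact slightly more explicit about the converse direction than the paper's.
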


In addition to $B(t,x;123)$, let us define the generating functions 
\begin{align*}
\grave{B}(t,x;123)&\coloneqq \sum_{n=0}^{\infty}\sum_{\pi\in\mathfrak{S}_{n}(123)}t^{\rbdes(\pi)}x^{n}\quad \text{and} \\
F(u,v,w,x)&\coloneqq \sum_{n=0}^\infty \sum_{\mu\in\mathcal{D}_n} u^{\hibasc(\mu)} v^{\lobasc(\mu)} w^{\ini_{UU}(\mu)} x^n.
\end{align*}
By Lemma \ref{l-123dyck}, we have $B(t,x;123)=F(t,t,1,x)$ and $\grave{B}(t,x;123)=F(t,t,t,x)$, so our goal is now to find an expression for $F(u,v,w,x)$. To do this, we shall require another generating function $G(s,t,z)$ for the joint distribution of two different statistics over Dyck paths.

Given $\nu\in\mathcal{D}_m$, let $\pk(\nu)\coloneqq \occ_{UD}(\nu)$ denote the number of peaks of $\nu$. For $i\in[m]$, let $U_i$ and $D_i$ denote the $i$th $U$ step and the $i$th $D$ step of $\nu$, respectively. Define $\con(\nu)$ to be the number of $i\in[m-1]$ such that $D_i$ and $D_{i+1}$ are consecutive, but $U_i$ and $U_{i+1}$ are not. Then let
$$G(s,t,z)\coloneqq\sum_{m=0}^\infty \sum_{\nu\in\mathcal{D}_m} s^{\pk(\nu)} t^{\con(\nu)} z^m.$$

The next two lemmas provide a formula for $G(s,t,z)$ and explain the connection between the generating functions $F(u,v,w,x)$ and $G(s,t,z)$. We will delay their proofs until Appendix~\ref{s-B}.

\begin{lem}\label{lem:G}
    We have $$G(s,t,z)=\frac{1-(1+s-2t)z-\sqrt{1-2(1+s)z+((1+s)^2-4st)z^2}}{2tz}.$$
\end{lem}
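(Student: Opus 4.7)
The plan is to encode Dyck paths as weighted Motzkin paths so that $\pk$ and $\con$ translate cleanly into step-type counts, and then apply the standard Motzkin generating function. First, I would parametrize $\nu\in\mathcal{D}_m$ by its run decomposition $\nu = U^{a_1}D^{d_1}\cdots U^{a_r}D^{d_r}$ with $r=\pk(\nu)$, and set $A_j = a_1+\cdots+a_j$ and $B_\ell=d_1+\cdots+d_\ell$. The condition that $\nu$ is a Dyck path is exactly $A_j\ge B_j$ for all $j$, together with $A_r=B_r=m$. Let $\mathcal{A}=\{A_1,\ldots,A_{r-1}\}$ and $\mathcal{B}=\{B_1,\ldots,B_{r-1}\}$, which are subsets of $[m-1]$ of size $r-1$. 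Directly from the run decomposition one checks that $U_i$ and $U_{i+1}$ are consecutive in $\nu$ iff $i\notin\mathcal{A}$ (i.e., $U_i$ is not the last $U$ of some ascending run), and similarly that $D_i,D_{i+1}$ are consecutive iff $i\notin\mathcal{B}$. Therefore $\con(\nu)=|\mathcal{A}\setminus\mathcal{B}|$.

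Next, I would define a sequence $\phi(\nu)\in\{U,D,F_1,F_2\}^{m-1}$ by the rule $\phi_x=U$ if $x\in\mathcal{B}\setminus\mathcal{A}$, $\phi_x=D$ if $x\in\mathcal{A}\setminus\mathcal{B}$, $\phi_x=F_1$ if $x\in\mathcal{A}\cap\mathcal{B}$, and $\phi_x=F_2$ otherwise. Reading $U,D$ as unit up/down steps and $F_1,F_2$ as two colors of flat steps, the majorization $A_j\ge B_j$ is equivalent to $\phi(\nu)$ being non-negative and ending at height $0$, and $\phi$ is easily inverted by reading off the cumulative sums, so it is a bijection from nonempty Dyck paths of semilength $m$ onto two-color-flat Motzkin paths of length $m-1$. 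Writing $k_U,k_D,k_{F_1},k_{F_2}$ for the corresponding step counts, the identities $r-1=|\mathcal{B}|=k_U+k_{F_1}$ and $\con(\nu)=k_D$ give the term-by-term translation
\[
s^{\pk(\nu)}t^{\con(\nu)}z^m \;=\; sz\cdot(sz)^{k_U}(tz)^{k_D}(sz)^{k_{F_1}}z^{k_{F_2}}.
\]
Merging $F_1$ and $F_2$ into a single flat step of combined weight $(1+s)z$ yields $G = 1 + sz\,M$, where the $1$ accounts for the empty Dyck path and $M$ is the generating function for Motzkin paths with up-weight $sz$, down-weight $tz$, and flat-weight $(1+s)z$.

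Finally, the standard first-return decomposition of Motzkin paths gives $M = 1 + (1+s)z\,M + (sz)(tz)\,M^2$, equivalently $stz^2 M^2 - (1-(1+s)z)M + 1 = 0$. Choosing the root analytic at $z=0$ and substituting into $G = 1 + szM$, a routine simplification (using $(1-(1+s)z)^2 - 4stz^2 = 1 - 2(1+s)z + ((1+s)^2-4st)z^2$) produces the closed form stated in the lemma. The main obstacle is the bookkeeping needed to confirm the combinatorial identification $\con(\nu)=|\mathcal{A}\setminus\mathcal{B}|$ and the resulting four-letter labeling; once that is in place, the reduction to weighted Motzkin paths is routine.
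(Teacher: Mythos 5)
Your proposal is correct and takes essentially the same approach as the paper: your map $\phi$ is exactly the paper's bijection $\psi$ onto $2$-Motzkin paths of length $m-1$ (your sets $\mathcal{A}$ and $\mathcal{B}$ record precisely which $U_i$ and which $D_{i+1}$ are peak steps in the paper's red/blue coloring), and both arguments conclude with the same quadratic functional equation $stz^2M^2-(1-(1+s)z)M+1=0$. The only cosmetic difference is that you attach the weight $s$ to up and $F_1$ steps while the paper attaches it to down and $h_1$ steps, which is immaterial since up and down steps are equinumerous.
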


\begin{lem}\label{lem:FG}
    We have
    $$F(u,v,w,x)=\frac{1}{u}\left(w+\frac{ux}{1-x}\right)\left(G(s(u,v,x),t(u,v,x),z(u,v,x))-1\right)+\frac{1}{1-x}$$
    where 
    \begin{align*}
    a(u,v,x) &\coloneqq 1+\frac{ux}{1-x}\left(1+v+\frac{ux}{1-x}\right), & s(u,v,x) &\coloneqq \frac{\frac{ux}{1-x}\left(1+\frac{ux}{1-x}\right)}{a(u,v,x)},\\
    t(u,v,x) &\coloneqq \frac{\left(1+\frac{ux}{1-x}\right)\left(v+\frac{ux}{1-x}\right)}{a(u,v,x)}, \quad \text{and} &  z(u,v,z) &\coloneqq a(u,v,x)\,x.
    \end{align*}
\end{lem}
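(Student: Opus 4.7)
The plan is to prove the identity by decomposing Dyck paths according to their peak-cluster structure and recognizing the generating function of the decomposed objects as a substitution of $G$ from Lemma \ref{lem:G}.

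First, the Dyck paths of the form $(UD)^n$ for $n\ge 0$ have $\hibasc(\mu)=\lobasc(\mu)=\ini_{UU}(\mu)=0$, and thus contribute $\sum_{n\ge 0}x^n=\frac{1}{1-x}$, which is exactly the second summand on the right-hand side of the identity. These are precisely the Dyck paths with no non-peak (blue) step.

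For every other $\mu\in\mathcal{D}_n$, I use the unique cluster decomposition
$$\mu=U^{\alpha_0}(UD)^{k_1}D^{\beta_1}U^{\alpha_1}(UD)^{k_2}\cdots(UD)^{k_c}D^{\beta_c},$$
where $c\ge 1$ is the number of maximal peak clusters, each $k_j\ge 1$, $\alpha_0,\beta_c\ge 0$, and $\alpha_i+\beta_i\ge 1$ for $1\le i\le c-1$. Under this decomposition, $\hibasc(\mu)=c-1$, $\ini_{UU}(\mu)=[\alpha_0\ge 1]$, and the semilength of $\mu$ equals the semilength of the blue (non-peak) subpath plus $k_1+\cdots+k_c$.

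The heart of the proof is a bijection between such $\mu$ and pairs $(\nu,\eta)$, where $\nu$ is a non-empty Dyck path and $\eta$ is a local decoration encoding the cluster insertions along $\nu$. Under this bijection, each semilength unit of $\nu$ receives a weight of $a=1+A(1+v+A)$ with $A=ux/(1-x)$: the four terms $1,\,A,\,Av,\,A^2$ enumerate the possible local cluster configurations at a slot (no cluster, one cluster, one cluster witnessing a $\lobasc$-event, and two clusters, respectively), with each factor of $A$ contributing a $u$-mark to the cluster count and an $x^k$ for a cluster of size $k$. The refinements $s=A(1+A)/a$ at peaks of $\nu$ and $t=(1+A)(v+A)/a$ at $\con$-positions enforce the constrained configurations admissible at those special positions, and the identification $z=ax$ absorbs the per-slot $a$-weight into the $z$-variable of $G$. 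The prefactor $\frac{1}{u}(w+A)$ handles the initial segment: the $w$ term corresponds to an initial $U^{\alpha_0}$-block with $\alpha_0\ge 1$ (marking $\ini_{UU}=1$), and the $A$ term to a leading cluster with $\alpha_0=0$, while the factor $1/u$ compensates for the offset $\hibasc(\mu)=c-1$ against the $c$ cluster-contributed factors of $u$. Subtracting $1$ from $G$ excludes the empty path $\nu$, which would otherwise correspond to already-counted $(UD)^n$-type paths.

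The main technical obstacle is verifying the translation of $\lobasc(\mu)$ into the $v$-contributions coming from the substitutions into $a$, $s$, and $t$: specifically, one must show that the pairs of consecutive blue $D$-steps in $\mu$ whose matched blue $U$-steps lie in different blue $U$-blocks correspond bijectively with the $\con$-events of $\nu$ under the bijection, so that each such witness contributes exactly one factor of $v$. This requires a careful case analysis at each type of position of $\nu$ (generic step, peak, $\con$-position), together with a precise accounting of how empty blue blocks of $\mu$ are encoded via the cluster-insertion decorations.
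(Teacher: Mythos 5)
Your strategy is the same as the paper's: peel off the pure-peak paths $(UD)^n$ to get the $\frac{1}{1-x}$ term, pass from $\mu$ to the Dyck path $\nu$ formed by its non-peak (blue) steps, view $\mu$ as $\nu$ with clusters of red peaks inserted at its vertices, and recognize the resulting weighted sum over $\nu$ as the substitution of $s$, $t$, $z$ into $G-1$ times the prefactor $\frac{1}{u}\left(w+\frac{ux}{1-x}\right)$. Your identification of the local weights is also correct: with $A=\frac{ux}{1-x}$, the four terms $1$, $A$, $Av$, $A^2$ of $a$ do correspond to the four insertion patterns at a generic pair of slots (nothing inserted; a cluster only before the next $D$ of $\nu$; a cluster only before the next $U$ of $\nu$, which creates a $\lobasc$ witness; clusters in both), and $s$ and $t$ are the corrected weights at peaks and $\con$-positions of $\nu$.

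The gap is that the proof stops exactly where the work is. You state that "the main technical obstacle is verifying the translation of $\lobasc(\mu)$ into the $v$-contributions" and that "this requires a careful case analysis," but you do not carry out that analysis, and it is precisely this analysis that constitutes the proof. Concretely, what is missing is the verification, for each $i\in[m-1]$, of which of the three slot types occurs and what it contributes: (i) at a $\con$-position of $\nu$ the weight is $(1+A)(v+A)$, because the blue $U_i,U_{i+1}$ are already non-consecutive so the $\lobasc$ witness appears exactly when nothing is inserted before $D_{i+1}$; (ii) when $U_{i+1}D_{i+1}$ is a peak of $\nu$ the weight is $(1+A)A$, because at least one red peak must be inserted inside it (else it would be a peak of $\mu$) and no $\lobasc$ witness can arise; (iii) otherwise the weight is $1+Av+A(1+A)=a$, where the $Av$ term is the one subtle case (insertion before $U_{i+1}$ but not before $D_{i+1}$ separates the blue $U$'s while leaving the blue $D$'s adjacent). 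You also do not account for the three boundary slots --- before $U_1$, before $D_1$ (which is always inside a peak of $\nu$, forcing a factor $A$), and after the last step of $\nu$ (a free factor $1+A$) --- whose product $A(1+A)$ is what upgrades the exponent of $(A(1+A))$ from $p-1$ to $p$ and makes the total collapse to $\frac{1}{u}(w+A)\,s^p t^c z^m$. Without these verifications the specific forms of $a$, $s$, $t$, $z$ are asserted rather than derived, so the lemma is not yet proved.
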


We are now ready to prove Theorems \ref{t-123} and \ref{t-123-nar}.

\begin{proof}[Proof of Theorems \ref{t-123}--\ref{t-123-nar}] An expression for $F(u,v,w,x)$ is obtained by combining Lemmas~\ref{lem:G} and~\ref{lem:FG}, which leads to the specializations
\begin{align*}
B(t,x;123)&=F(t,t,1,x)\\
&=\frac{1 - 2(1-t^2)x+(1-t)^2x^2-(1-(1-t)x)\sqrt{1-2(1+t)x+(1-t)^2x^2} }{2t^2x}
\end{align*}
and
$$\grave{B}(t,x;123)=F(t,t,t,x)=\frac{1-(1-t)x-\sqrt{1-2(1+t)x+(1-t)^2x^2} }{2tx}.$$
The former is the generating function of the numbers $\frac{2}{n+1}{n+1 \choose k+2}{n-2 \choose k}$ \cite[A108838]{oeis}, and the latter is the generating function of the Narayana numbers \cite[A001263]{oeis}.
\end{proof}

\section{\label{s-double}Results for pairs of patterns}

We now turn our attention to permutations avoiding a pair of patterns. By reverse-complementation symmetry, we only need to consider the pairs $\{123,132\}$, $\{123,231\}$, $\{123,321\}$, $\{132,213\}$, $\{132,321\}$, $\{213,231\}$, $\{213,312\}$, $\{231,312\}$, and $\{231,321\}$. For each of these pattern sets $\Pi$, we will derive a formula for the distribution of $\bdes$ over $\mathfrak{S}_{n}(\Pi)$; Theorem \ref{t-bdesW2} follows immediately from these results.

It was shown by Simion and Schmidt \cite{Simion1985} that for several pattern sets $\Pi\subseteq\mathfrak{S}_{3}$ of size 2, the cardinality of $\mathfrak{S}_{n}(\Pi)$ is $2^{n-1}$, which is also the number of binary words of length $n-1$. Many of our proofs in this section will use bijections with binary words. Let us write $\mathcal{W}_{n}$ for the set of binary words of length $n$, and $\mathcal{W}_{n}^{(1)}$ for the subset of those that end with a 1. For example, we have
\[
\mathcal{W}_{3}=\{000,001,010,011,100,101,110,111\}\quad\text{and}\quad\mathcal{W}_{3}^{(1)}=\{001,011,101,111\}.
\]
As with Dyck paths, given binary words $v$ and $w$, we write $\occ_{v}(w)$ for the number of $v$-factors in $w$. For example, we have $\occ_{011}(10110001011)=2$. We will also use the standard notation $2^{S}$ for the power set of a set $S$.

\subsection{The patterns 213 and 231} \label{ss-213-231}

We begin with $\Pi=\{213,231\}$. The distribution of $\bdes$ over $\mathfrak{S}_{n}(213,231)$ is given by the odd-indexed entries in the $n$th row of Pascal's triangle.

\begin{thm} \label{t-213-231}
For all $n\geq1$ and $k\geq0$, we have 
\[
b_{n,k}(213,231)={n \choose 2k+1}.
\]
\end{thm}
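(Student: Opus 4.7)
The plan is to prove Theorem~\ref{t-213-231} via a bijection between $\mathfrak{S}_n(213,231)$ and binary words of length $n-1$, under which big descents translate to occurrences of the factor $10$. First, I would establish the structural characterization that $\pi\in\mathfrak{S}_n(213,231)$ if and only if, for every $i\in[n]$, the entry $\pi_i$ equals either the minimum or the maximum of the suffix $\{\pi_i,\pi_{i+1},\ldots,\pi_n\}$. The ``only if'' direction is the key observation: if some $\pi_i$ is neither the minimum nor the maximum of its suffix, then there exist indices $j,k>i$ with $\pi_j<\pi_i<\pi_k$, producing a $213$ pattern if $j<k$ or a $231$ pattern if $k<j$. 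The converse is immediate, since any $213$ or $231$ pattern provides both a smaller and a larger letter after some $\pi_i$.

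This characterization yields a bijection $\Phi\colon\mathfrak{S}_n(213,231)\to\{0,1\}^{n-1}$ defined by $\Phi(\pi)=w_1w_2\cdots w_{n-1}$, with $w_i=1$ if $\pi_i$ is the maximum of the suffix and $w_i=0$ if it is the minimum. To invert $\Phi$, I observe that the set $S_i=\{\pi_i,\ldots,\pi_n\}$ of unused values is always an interval $[a_i,b_i]$ with $b_i-a_i=n-i$, and $\pi_i$ equals $b_i$ or $a_i$ according to whether $w_i$ is $1$ or $0$. Next, I would verify that $\bdes(\pi)=\occ_{10}(\Phi(\pi))$ by case analysis on $(w_i,w_{i+1})$: the case $w_i=0$ yields an ascent; the case $w_i=w_{i+1}=1$ gives $\pi_i-\pi_{i+1}=1$, a small descent; and the case $w_i=1,\,w_{i+1}=0$ gives $\pi_i-\pi_{i+1}=b_i-a_i=n-i\geq 2$, a big descent. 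The remaining case is $i=n-1$ with $w_{n-1}=1$, where $|S_{n-1}|=2$ forces $\pi_n=\pi_{n-1}-1$, again only a small descent; this matches the fact that no $10$-factor of $w$ starts at position $n-1$.

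Finally, I would invoke the classical identity that the number of binary words of length $n-1$ with exactly $k$ occurrences of the factor $10$ is $\binom{n}{2k+1}$. A clean bijective proof: to each such $w$ associate the extended word $\tilde w=0\,w\,1$, which starts with $0$, ends with $1$, and satisfies $\occ_{10}(\tilde w)=\occ_{10}(w)=k$. Since the transitions of $\tilde w$ alternate in type and both the first and last transitions are of type $01$, the total number of transitions is $2k+1$, and recording their positions gives a bijection between such words $w$ and $(2k+1)$-element subsets of $[n]$. The most delicate point in the overall argument is the treatment of the last position under $\Phi$, since the ``missing'' $w_n$ could in principle manufacture or omit a big descent; once this edge case is accounted for, the rest is a routine case check.
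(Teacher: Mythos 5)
Your proof is correct and follows essentially the same route as the paper: the same Simion--Schmidt-type bijection to binary words of length $n-1$ (with the roles of $0$ and $1$ swapped, so big descents become $10$-factors rather than $01$-factors), followed by the classical count of binary words by occurrences of a two-letter factor. The only substantive difference is that you supply a self-contained bijective proof of the identity $\binom{n}{2k+1}$ via transition positions of $0\,w\,1$, where the paper simply cites the literature; your interval-based verification of $\bdes(\pi)=\occ_{10}(\Phi(\pi))$, including the edge case at position $n-1$, is sound.
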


The first ten polynomials $B_{n}(t;213,231)$ are given in Table \ref{tb-213-231}.

\begin{table}
\begin{centering}
\begin{tabular}{|c|c|c|c|c|}
\cline{1-2} \cline{2-2} \cline{4-5} \cline{5-5} 
$n$ & $B_{n}(t;213,231)$ & \quad{} & $n$ & $B_{n}(t;213,231)$\tabularnewline
\cline{1-2} \cline{2-2} \cline{4-5} \cline{5-5} 
$0$ & $1$ &  & $5$ & $5+10t+t^{2}$\tabularnewline
\cline{1-2} \cline{2-2} \cline{4-5} \cline{5-5} 
$1$ & $1$ &  & $6$ & $6+20t+6t^{2}$\tabularnewline
\cline{1-2} \cline{2-2} \cline{4-5} \cline{5-5} 
$2$ & $2$ &  & $7$ & $7+35t+21t^{2}+t^{3}$\tabularnewline
\cline{1-2} \cline{2-2} \cline{4-5} \cline{5-5} 
$3$ & $3+t$ &  & $8$ & $8+56t+56t^{2}+8t^{3}$\tabularnewline
\cline{1-2} \cline{2-2} \cline{4-5} \cline{5-5} 
$4$ & $4+4t$ &  & $9$ & $9+84t+126t^{2}+36t^{3}+t^{4}$\tabularnewline
\cline{1-2} \cline{2-2} \cline{4-5} \cline{5-5} 
\end{tabular}
\par\end{centering}
\caption{\label{tb-213-231}Distribution of $\bdes$ over $\mathfrak{S}_{n}(213,231)$ for $0\leq n\leq9$.}
\end{table}

To prove Theorem \ref{t-213-231}, we will make use of a bijection $\phi_{213,231}\colon\mathfrak{S}_{n}(213,231)\rightarrow\mathcal{W}_{n-1}$ described by Bukata et al.\ \cite{Bukata2019}, which is equivalent to an earlier bijection due to Simion and Schmidt \cite{Simion1985} from $\mathfrak{S}_{n}(132,312)$ to $2^{\{2,3,\dots,n\}}$. To set up this bijection, let us describe the structure of permutations $\pi\in\mathfrak{S}_{n}(213,231)$. For each $k\in[n-1]$, either $\pi_{k}=\min\{\pi_{k},\pi_{k+1},\dots,\pi_{n}\}$ or $\pi_{k}=\max\{\pi_{k},\pi_{k+1},\dots,\pi_{n}\}$. Otherwise, $\pi_{k}$ along with the letters $\min\{\pi_{k},\pi_{k+1},\dots,\pi_{n}\}$ and $\max\{\pi_{k},\pi_{k+1},\dots,\pi_{n}\}$ would be either an occurrence of 213 or of 231. In fact, $\pi$ is completely determined by whether, for each $k\in[n-1]$, the letter $\pi_{k}$ is the smallest or the largest among all letters in the suffix $\pi_{k}\pi_{k+1}\dots\pi_{n}$. Thus, we may define the bijection
\[
\phi_{213,231}(\pi)=w_{1}w_{2}\dots w_{n-1}\in\mathcal{W}_{n-1}
\]
where
\[
w_{k}=\begin{cases}
1, & \text{if }\pi_{k}=\min\{\pi_{k},\pi_{k+1},\dots,\pi_{n}\},\\
0, & \text{if }\pi_{k}=\max\{\pi_{k},\pi_{k+1},\dots,\pi_{n}\},
\end{cases}
\]
for each $k\in[n-1]$. For example, we have $\phi_{213,231}(91238476)=01110100$.

\begin{lem}
\label{l-213-231}For all $n\geq1$ and $\pi\in\mathfrak{S}_{n}(213,231)$, we have $\bdes(\pi)=\occ_{01}(\phi_{213,231}(\pi)).$
\end{lem}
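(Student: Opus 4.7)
The plan is to establish the structural fact that the suffix $\pi_k\pi_{k+1}\cdots\pi_n$ of a $\{213,231\}$-avoiding permutation always consists of \emph{consecutive integers}, and then read off big descents from $w=\phi_{213,231}(\pi)$ by an elementary case analysis on adjacent pairs $w_kw_{k+1}$.

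To carry this out, let $L_k\coloneqq|\{i<k:w_i=1\}|$ and $R_k\coloneqq|\{i<k:w_i=0\}|$, so that $L_k+R_k=k-1$. I would prove by induction on $k$ that
\[
\{\pi_k,\pi_{k+1},\dots,\pi_n\}=\{L_k+1,L_k+2,\dots,n-R_k\}.
\]
The base case $k=1$ is trivial, and the inductive step is immediate from the definition of $\phi_{213,231}$: choosing $\pi_k$ to be the minimum of the current set (when $w_k=1$) or the maximum (when $w_k=0$) removes exactly one endpoint of a block of consecutive integers, leaving another such block. Consequently,
\[
\pi_k=\begin{cases}L_k+1, & \text{if }w_k=1,\\ n-R_k, & \text{if }w_k=0.\end{cases}
\]

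Next, I would compute $\pi_k-\pi_{k+1}$ in each of the four cases for $(w_k,w_{k+1})\in\{0,1\}^2$ (valid when $k\le n-2$), using $L_{k+1}+R_{k+1}=k$:
\begin{itemize}
\item If $w_kw_{k+1}=00$, then $\pi_k=n-R_k$ and $\pi_{k+1}=n-R_k-1$, giving $\pi_k-\pi_{k+1}=1$ (a small descent).
\item If $w_kw_{k+1}=01$, then $\pi_k=n-R_k$ and $\pi_{k+1}=L_k+1$, giving $\pi_k-\pi_{k+1}=n-(L_k+R_k)-1=n-k\ge 2$ (a big descent).
\item If $w_kw_{k+1}=10$, then $\pi_k=L_k+1<n-R_k=\pi_{k+1}$ (an ascent).
\item If $w_kw_{k+1}=11$, then $\pi_k=L_k+1$ and $\pi_{k+1}=L_k+2$ (an ascent).
\end{itemize}
Finally, I would treat $k=n-1$ separately: the suffix $\pi_{n-1}\pi_n$ is a pair of consecutive integers, so $|\pi_{n-1}-\pi_n|=1$ and position $n-1$ is never a big descent.

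Putting these together, the big descents of $\pi$ are in bijection with positions $k\in[n-2]$ where $w_kw_{k+1}=01$, and thus $\bdes(\pi)=\occ_{01}(w)$. There is no serious obstacle here; the only non-routine step is the initial induction certifying that each suffix is a block of consecutive integers, which is what collapses the difference $\pi_k-\pi_{k+1}$ into a simple function of $k$ alone.
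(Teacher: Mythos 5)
Your proof is correct, and it takes a somewhat different route from the paper's. The paper proves the equivalence ``$k$ is a big descent iff $w_kw_{k+1}=01$'' by a direct two-way argument: for the forward direction it locates the position $j$ of $\pi_k-1$, uses $231$-avoidance to force $k+1<j$, and concludes $\pi_{k+1}$ is the minimum of its suffix; for the converse it observes that the letters after $\pi_{k+1}$ are sandwiched strictly between $\pi_{k+1}$ and $\pi_k$, ruling out $\pi_k=\pi_{k+1}+1$. You instead invert the bijection explicitly: your induction shows each suffix $\{\pi_k,\dots,\pi_n\}$ is an interval $\{L_k+1,\dots,n-R_k\}$, giving the closed form $\pi_k=L_k+1$ or $n-R_k$ according to $w_k$, after which all four adjacent-pair cases are settled by arithmetic (and the boundary case $k=n-1$ is correctly excluded, since the last two letters differ by $1$). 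Your computations check out ($L_k+R_k=k-1$ gives $\pi_k-\pi_{k+1}=n-k\ge 2$ in the $01$ case). The paper's argument is shorter and purely local; yours is more computational but yields strictly more information---for instance, it shows at once that $00$-factors correspond to small descents and $10$, $11$ to ascents, which recovers the full descent composition of $\pi$ from $w$ rather than just the big-descent count.
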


\begin{proof}
Let $\pi\in\mathfrak{S}_{n}(213,231)$ and $w=w_{1}w_{2}\dots w_{n-1}=\phi_{213,231}(\pi)$. We prove that $k\in[n-1]$ is a big descent of $\pi$ if and only if $w_{k}=0$ and $w_{k+1}=1$.

Suppose that $k$ is a big descent of $\pi$. Then $\pi_{k}=\max\{\pi_{k},\pi_{k+1},\dots,\pi_{n}\}$, so $w_{k}=0$. Let $j\in[n-1]$ be the position of $\pi_{k}-1$ in $\pi$, i.e., $\pi_{j}=\pi_{k}-1$. Then we must have $k<j$, or else $\pi_{j}\pi_{k}\pi_{k+1}$ would be an occurrence of 231. Moreover, because $k$ is a big descent of $\pi$, we have $k+1<j$ and $\pi_{k+1}<\pi_{j}$. It follows that $\pi_{k+1}=\min\{\pi_{k+1},\pi_{k+2},\dots,\pi_{n}\}$, so $w_{k+1}=1$.

Conversely, suppose that $w_{k}=0$ and $w_{k+1}=1$. Since $\pi_{k}=\max\{\pi_{k},\pi_{k+1},\dots,\pi_{n}\}$, we know that $k$ is a descent of $\pi$. We have $k+1\leq\left|w\right|=n-1$ by assumption, so $n\geq k+2$\textemdash i.e., there is at least one letter in $\pi$ after $\pi_{k+1}$. Now, we see that $\pi_{k}=\max\{\pi_{k},\pi_{k+1},\dots,\pi_{n}\}$ and $\pi_{k+1}=\min\{\pi_{k+1},\pi_{k+2},\dots,\pi_{n}\}$ together imply $\pi_{k+1}\neq\pi_{k}-1$, as the letters appearing after $\pi_{k+1}$ are less than $\pi_{k}$ but greater than $\pi_{k+1}$. We conclude that $k$ is a big descent of $\pi$.
\end{proof}

It is well known that, for all $n\geq1$ and $k\geq0$, the number of binary words of length $n$ with $k$ occurrences of the factor $01$ is ${n+1 \choose 2k+1}$; see, e.g., \cite{Carlitz1977}. Theorem \ref{t-213-231} is now an immediate consequence of this formula along with Lemma~\ref{l-213-231}.

\subsection{The patterns 213 and 312}

Together with Theorem \ref{t-213-231}, the next result gives us our first non-trivial $\bdes$-Wilf equivalence.

\begin{thm} \label{t-213-312}
For all $n\geq1$ and $k\geq0$, we have 
\[
b_{n,k}(213,312)={n \choose 2k+1}.
\]
\end{thm}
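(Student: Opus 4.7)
The plan is to mirror the proof of Theorem~\ref{t-213-231}: I will construct a bijection $\phi_{213,312}\colon\mathfrak{S}_n(213,312)\to\mathcal{W}_{n-1}$ sending $\bdes$ to $\occ_{01}$, and then invoke the formula $|\{w\in\mathcal{W}_{n-1}:\occ_{01}(w)=k\}|=\binom{n}{2k+1}$ already cited at the end of Section~\ref{ss-213-231}.

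First I would establish a rigid structural lemma: every $\pi\in\mathfrak{S}_n(213,312)$ with $\pi_1=m$ decomposes as $\pi=m\,\pi^{(1)}\,\pi^{(2)}$, where $\pi^{(1)}$ is the subsequence of letters of $\pi$ exceeding $m$ (and satisfies $\std(\pi^{(1)})\in\mathfrak{S}_{n-m}(213,312)$), while $\pi^{(2)}=(m-1)(m-2)\cdots 1$. This rests on a brief forbidden-pattern analysis applied to $\pi_1=m$: if a letter $a<m$ were followed in the suffix by some $b>m$, the triple $(m,a,b)$ would form a $213$; if some $a<b<m$ appeared in that order in the suffix, $(m,a,b)$ would form a $312$. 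Hence the letters above $m$ precede those below $m$, and the latter are strictly decreasing. This is the main technical point; all further steps are routine.

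Using this decomposition I would define $\phi_{213,312}$ recursively, setting $\phi_{213,312}(\pi)=\varepsilon$ when $|\pi|\le 1$, and for $\pi=m\,\pi^{(1)}\,\pi^{(2)}$ of length $n\ge 2$,
\[
\phi_{213,312}(\pi)=\begin{cases}0^{n-1}, & \text{if }m=n,\\ 0^{m-1}\,1\,\phi_{213,312}(\std(\pi^{(1)})), & \text{if }m<n.\end{cases}
\]
The map is visibly reversible---one reads off the length of the initial $0$-block, strips the subsequent $1$ (if present), and recurses---so it is a bijection onto $\mathcal{W}_{n-1}$.

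Finally, I would prove $\bdes(\pi)=\occ_{01}(\phi_{213,312}(\pi))$ by induction on $n$ across three cases. When $m=n$, both sides equal $0$. When $m=1$, the transition $1\to\pi^{(1)}_1$ is an ascent, so $\bdes(\pi)=\bdes(\std(\pi^{(1)}))$, and prepending the single bit $1$ to $\phi_{213,312}(\std(\pi^{(1)}))$ creates no new $01$-factor. When $2\le m<n$, the descents of $\pi$ outside $\pi^{(1)}$ consist of small descents inside $\pi^{(2)}$ together with the single big descent at the transition from the last letter of $\pi^{(1)}$ (which is at least $m+1$) to $m-1$; correspondingly, the prepended block $0^{m-1}\,1$ creates exactly one new $01$-factor at the boundary between its $0$'s and its $1$, and the transition from that $1$ to the first symbol of $\phi_{213,312}(\std(\pi^{(1)}))$ contributes nothing. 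Combining with the binary-word formula above yields $b_{n,k}(213,312)=\binom{n}{2k+1}$, as required.
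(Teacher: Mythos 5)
Your proof is correct, and it follows the same overall strategy as the paper: build a bijection from $\mathfrak{S}_n(213,312)$ to $\mathcal{W}_{n-1}$ carrying $\bdes$ to $\occ_{01}$, then quote the count $\binom{n}{2k+1}$ of binary words of length $n-1$ with $k$ occurrences of $01$. The difference is in how the bijection is built and analyzed. The paper defines $\phi_{213,312}$ globally, setting $w_k=1$ exactly when the letter $k$ appears before $n$ in $\pi$ (using the fact that the letters before $n$ increase and those after $n$ decrease), and then characterizes big descents directly: $k$ is a big descent iff $w_{\pi_{k+1}}=0$ and $w_{\pi_{k+1}+1}=1$. You instead peel off the first letter $m=\pi_1$, prove the rigid decomposition $\pi=m\,\pi^{(1)}\,(m-1)\cdots 1$, and define the word recursively as $0^{m-1}1\,\phi(\std(\pi^{(1)}))$; your forbidden-pattern analysis for this lemma is sound. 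In fact the two maps coincide: the letters $1,\dots,m-1$ all lie after $n$ and $m$ lies before $n$, so the paper's word for $\pi$ is exactly $0^{m-1}1$ followed by the paper's word for $\std(\pi^{(1)})$. What your route buys is an inductive, structural proof of the statistic translation in place of the paper's direct positional argument; what it costs is one step you leave implicit, namely that the map is surjective onto $\mathcal{W}_{n-1}$. That requires either checking the converse of your structural lemma (that $m\,\pi^{(1)}\,(m-1)\cdots 1$ avoids both patterns for \emph{every} choice of $m$ and every avoider $\pi^{(1)}$ on $\{m+1,\dots,n\}$ --- a routine case check) or invoking $|\mathfrak{S}_n(213,312)|=2^{n-1}$ from Simion--Schmidt together with the injectivity you already have. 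Either fix is one line, but it should be said.
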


Observe that permutations in $\mathfrak{S}_{n}(213,312)$ have the following structure: the letters appearing before $n$ must be increasing, and those appearing after $n$ must be decreasing. Thus, a permutation $\pi\in\mathfrak{S}_{n}(213,312)$ is completely determined by $n-1$ choices: for each letter $1,2,\dots,n-1$, we choose whether the letter is placed before $n$ or after $n$. This allows us to define the bijection $\phi_{213,312}\colon\mathfrak{S}_{n}(213,312)\rightarrow\mathcal{W}_{n-1}$ by 
\[
\phi_{213,231}(\pi)=w_{1}w_{2}\dots w_{n-1}\in\mathcal{W}_{n-1}
\]
where
\[
w_{k}=\begin{cases}
1, & \text{if the letter }k\text{ appears before }n\text{ in \ensuremath{\pi},}\\
0, & \text{otherwise,}
\end{cases}
\]
for all $k\in[n-1]$. Again, this description of $\phi_{213,312}$ was given by Bukata et al.\ \cite{Bukata2019}, but it is equivalent to an earlier bijection due to Simion and Schmidt \cite{Simion1985}.
\begin{lem} \label{l-213-312}
For all $n\geq1$ and $\pi\in\mathfrak{S}_{n}(213,312)$, we have $\bdes(\pi)=\occ_{01}(\phi_{213,312}(\pi))$.
\end{lem}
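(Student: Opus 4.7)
The plan is to use the structural characterization stated just before the lemma: any $\pi\in\mathfrak{S}_n(213,312)$ is strictly increasing up to the position $p$ of the letter $n$ and strictly decreasing from position $p$ onward. Writing $a_1>a_2>\dots>a_m$ for the letters after $n$ (so that $\pi_{p+j}=a_j$ and $m=n-p$), the descents of $\pi$ are exactly the positions $p,p+1,\dots,n-1$ (when $p<n$). Setting $a_0\coloneqq n$ for convenience, the descent at position $p+j$ (for $0\le j\le m-1$) is the consecutive pair $a_j\to a_{j+1}$, and it is a big descent if and only if $a_j-a_{j+1}\ge 2$.

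The key observation I would exploit is that $a_j-a_{j+1}\ge 2$ if and only if the letter $a_{j+1}+1$ lies in the increasing part of $\pi$. Indeed, if $a_j>a_{j+1}+1$, then $a_{j+1}+1\notin\{a_1,\dots,a_m\}$, since $a_j$ and $a_{j+1}$ are consecutive terms of the decreasing sequence and so no $a_i$ can lie strictly between them; hence $a_{j+1}+1$ is before $n$ and $w_{a_{j+1}+1}=1$. Conversely, if $a_j=a_{j+1}+1$, then $a_{j+1}+1=a_j$ is after $n$ and $w_{a_{j+1}+1}=0$. Combined with $w_{a_{j+1}}=0$ (which holds because $a_{j+1}$ is after $n$), this shows that each big descent at position $p+j$ corresponds precisely to a $01$-factor of $w$ at position $(a_{j+1},a_{j+1}+1)$.

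To finish, I would verify that this correspondence is in fact a bijection onto all $01$-factors. Any $01$-factor of $w$ occurs at some position $(k,k+1)$ with letter $k$ after $n$, so $k=a_i$ for a unique $i\in[m]$, and the condition $w_{k+1}=1$ says that $a_i+1$ is before $n$. Applying the analysis of the previous paragraph with $j=i-1$, this forces $a_{i-1}-a_i\ge 2$ and therefore yields the big descent at position $p+i-1$. The only subtlety I anticipate is treating the descent right at the position of $n$ (i.e.\ the case $j=0$) on the same footing as the descents within the decreasing tail, but this is handled uniformly by the convention $a_0=n$; no serious technical obstacle is expected.
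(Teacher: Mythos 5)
Your proof is correct and follows essentially the same route as the paper's: both use the increasing-then-decreasing structure of permutations in $\mathfrak{S}_{n}(213,312)$ to show that a descent is big exactly when $w_{\pi_{k+1}}=0$ and $w_{\pi_{k+1}+1}=1$, i.e., when a $01$-factor starts at position $\pi_{k+1}$. The only nitpick is in the case $j=0$ with $a_{1}=n-1$: there the index $a_{1}+1=n$ lies outside the word $w\in\mathcal{W}_{n-1}$, so one should say that no $01$-factor can start at position $n-1$ rather than that $w_{a_{1}+1}=0$; this does not affect the argument.
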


\begin{proof}
Let $\pi\in\mathfrak{S}_{n}(213,312)$ and $w=w_{1}w_{2}\dots w_{n-1}=\phi_{213,312}(\pi)$. We prove that $k\in[n-1]$ is a big descent of $\pi$ if and only if $w_{\pi_{k+1}}=0$ and $w_{\pi_{k+1}+1}=1$.

Suppose that $k\in[n-1]$ is a big descent of $\pi$, so that $\pi_{k}>\pi_{k+1}+1$. In particular, $\pi_{k+1}$ is part of the decreasing subsequence which follows $n$, so $w_{\pi_{k+1}}=0$. Furthermore, $\pi_{k+1}+1$ cannot be part of this decreasing subsequence, as that would imply that $\pi_{k+1}+1$ is positioned between $\pi_{k}$ and $\pi_{k+1}$. This means that $\pi_{k+1}+1$ is part of the increasing subsequence which precedes $n$, so $w_{\pi_{k+1}+1}=1$.

Conversely, suppose that $w_{\pi_{k+1}}=0$ and $w_{\pi_{k+1}+1}=1$. Then $\pi_{k+1}$ is part of the decreasing subsequence which follows $n$, so $\pi_{k}>\pi_{k+1}$. In addition, we cannot have $\pi_{k}=\pi_{k+1}+1$, since $\pi_{k+1}+1$ is part of the increasing subsequence which precedes $n$. Therefore, $k$ is a big descent of $\pi$.
\end{proof}

Lemma \ref{l-213-312} shows that the distribution of $\bdes$ over $\mathfrak{S}_{n}(213,312)$ is 
the same as that of $\occ_{01}$ over $\mathcal{W}_{n-1}$. As in Section \ref{ss-213-231}, this proves Theorem \ref{t-213-312}.
Combining Lemmas \ref{l-213-231} and \ref{l-213-312}, we see that the composition $\phi_{213,312}^{-1}\circ\phi_{213,231}$ provides a $\bdes$-preserving bijection between $\mathfrak{S}_{n}(213,231)$ and $\mathfrak{S}_{n}(213,312)$.

\subsection{\label{ss-123-132}The patterns 123 and 132}

Next we give a formula for the generating function $B(t,x;123,132)$.
\begin{thm} \label{t-123-132}
We have 
\[
B(t,x;123,132)=\frac{1-x+(1-t)x^{2}-(1-t)^{2}x^{3}}{1-2x+(1-t)x^{2}+t(1-t)x^{3}}.
\]
\end{thm}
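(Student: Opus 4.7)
The plan is to derive the formula by setting up and solving a system of functional equations, in the spirit of Lemma~\ref{l-132funeq}. The first step is to observe that every nonempty $\pi \in \mathfrak{S}_n(123,132)$ decomposes uniquely in the form
\[
\pi = (n-1)(n-2)\cdots(n-k)\, n\, \tau,
\]
where $0 \leq k \leq n-1$ and $\tau \in \mathfrak{S}_{n-k-1}(123,132)$. Indeed, writing $\pi = \sigma n \tau$, any two letters of $\sigma$ in ascending order would together with $n$ form a $123$-pattern, so $\sigma$ must be decreasing; and any letter of $\sigma$ smaller than some letter of $\tau$ would together with $n$ form a $132$-pattern, so the letters of $\sigma$ are precisely the $k$ largest letters of $[n-1]$, written in decreasing order.

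Next, I would introduce the auxiliary generating function
\[
\bar{B}(t,x;123,132) \coloneqq \sum_{n=1}^{\infty} \sum_{\substack{\pi \in \mathfrak{S}_n(123,132)\\ \pi_1 = n}} t^{\bdes(\pi)} x^n,
\]
which is needed because the descent between $n$ and $\tau_1$ (when $\tau$ is nonempty) is big if and only if $\tau_1 < n - 1$, and when $k = 0$ this latter condition is equivalent to $\tau$ \emph{not} beginning with its own maximum. The accounting of big descents in the decomposition is otherwise uniform: the descents inside $(n-1)(n-2)\cdots(n-k)$ are all small and no descent occurs between $n-k$ and $n$, while the big descents internal to $\tau$ are preserved in $\pi$. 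Splitting into cases ($k=0$ versus $k\ge 1$, $\tau$ empty versus nonempty, and when $k=0$ whether $\tau_1 = n-1$), and abbreviating $B \coloneqq B(t,x;123,132)$ and $\bar{B} \coloneqq \bar{B}(t,x;123,132)$, I would obtain the system
\begin{align*}
B &= \frac{1}{1-x} + x(1-t)\bar{B} + \frac{xt(B-1)}{1-x},\\
\bar{B} &= x + x\bar{B} + xt(B - 1 - \bar{B}).
\end{align*}

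To finish, I would solve the second equation for $\bar{B}$ to get $\bar{B} = \frac{x(1-t) + xtB}{1 - (1-t)x}$, substitute this expression into the first equation, clear denominators, and solve for $B$. The main obstacle is purely algebraic: one must verify that after simplification the numerator becomes $1 - x + (1-t)x^2 - (1-t)^2 x^3$ and the denominator becomes $1 - 2x + (1-t)x^2 + t(1-t)x^3$. Using the identity $(1 - (1+t)x)(1-(1-t)x) = 1 - 2x + (1-t^2)x^2$ and carefully collecting the $x^3$ contributions from $x^2 t(1-t)(1-x)$ yields exactly the stated rational expression.
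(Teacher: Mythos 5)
Your proof is correct, but it takes a genuinely different route from the paper. You work directly with a structural decomposition of $\mathfrak{S}_{n}(123,132)$ ($\pi=(n-1)\cdots(n-k)\,n\,\tau$, which is indeed forced by avoidance of $123$ and $132$) and set up a system of functional equations in the style of Lemma~\ref{l-132funeq}; I checked that your two equations are consistent (your first equation is the ``$1+\bar{B}+\frac{x^{2}(1+t(B-1))}{1-x}$'' relation with the $k=0$ case expanded via the second equation), that the big-descent bookkeeping is right (the descent at $n$ is big unless $k=0$ and $\tau$ starts with its maximum, which is exactly what $\bar{B}$ tracks), and that solving the system does yield the stated rational function. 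The paper instead encodes each permutation by its set of right-to-left maxima as a binary word ending in $1$, shows that $\bdes$ becomes $\occ_{10}+\occ_{011}$ (Lemma~\ref{l-123-132}), and enumerates such words by a transfer-matrix-style system (Proposition~\ref{p-10-011-gf}). Your approach is more self-contained and arguably shorter for proving Theorem~\ref{t-123-132} alone; the paper's word encoding has the added payoff that the same statistic $\occ_{10}+\occ_{011}$ reappears for $\mathfrak{S}_{n}(132,213)$, immediately giving the non-trivial $\bdes$-Wilf equivalence of Theorem~\ref{t-132-213} via a $\bdes$-preserving bijection, which your functional-equation argument would not produce directly (though an analogous decomposition with increasing prefixes would give the same system for $\{132,213\}$).
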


Table \ref{tb-123-132} displays the first ten polynomials $B_{n}(t;123,132)$.

\begin{table}
\begin{centering}
\begin{tabular}{|c|c|c|c|c|}
\cline{1-2} \cline{2-2} \cline{4-5} \cline{5-5} 
$n$ & $B_{n}(t;123,132)$ & \quad{} & $n$ & $B_{n}(t;123,132)$\tabularnewline
\cline{1-2} \cline{2-2} \cline{4-5} \cline{5-5} 
$0$ & $1$ &  & $5$ & $2+8t+6t^{2}$\tabularnewline
\cline{1-2} \cline{2-2} \cline{4-5} \cline{5-5} 
$1$ & $1$ &  & $6$ & $2+11t+16t^{2}+3t^{3}$\tabularnewline
\cline{1-2} \cline{2-2} \cline{4-5} \cline{5-5} 
$2$ & $2$ &  & $7$ & $2+14t+31t^{2}+16t^{3}+t^{4}$\tabularnewline
\cline{1-2} \cline{2-2} \cline{4-5} \cline{5-5} 
$3$ & $2+2t$ &  & $8$ & $2+17t+51t^{2}+47t^{3}+11t^{4}$\tabularnewline
\cline{1-2} \cline{2-2} \cline{4-5} \cline{5-5} 
$4$ & $2+5t+t^{2}$ &  & $9$ & $2+20t+76t^{2}+104t^{3}+50t^{4}+4t^{5}$\tabularnewline
\cline{1-2} \cline{2-2} \cline{4-5} \cline{5-5} 
\end{tabular}
\par\end{centering}
\caption{\label{tb-123-132}Distribution of $\bdes$ over $\mathfrak{S}_{n}(123,132)$ for $0\leq n\leq9$.}
\end{table}

To prove this theorem, recall that, given $\pi\in\mathfrak{S}_{n}$ and $k\in[n]$, the letter $\pi_{k}$ is a \textit{right-to-left maximum} of $\pi$ if $\pi_{k}>\pi_{j}$ for all $j>k$. Let $\RLmax(\pi)$ denote the set of right-to-left maxima of $\pi$, and note that $n\in\RLmax(\pi)$. For example, we have $\RLmax(2745361)=\{1,6,7\}$.

Simion and Schmidt showed that the map $\pi\mapsto\RLmax(\pi)\backslash\{n\}$ is a bijection from $\mathfrak{S}_{n}(123,132)$ to $2^{[n-1]}$; see the proof of \cite[Proposition 12]{Simion1985}. In other words, given any set $S\subseteq[n-1]$, there is a unique permutation $\pi\in\mathfrak{S}_{n}(123,132)$ such that $\RLmax(\pi)=S\cup\{n\}$. Thus, $\pi\mapsto\RLmax(\pi)$ is a bijection from $\mathfrak{S}_{n}(123,132)$ to the set of subsets of $[n]$ that contain~$n$. Next we describe the inverse of this map.

Given $a,b\in\mathbb{Z}$ satisfying $a<b$, let $\overleftarrow{(a,b)}$ denote the decreasing word
\[
\overleftarrow{(a,b)}\coloneqq(b-1)\:(b-2)\:\dots\:(a+1).
\]
Then the unique permutation $\pi\in\mathfrak{S}_{n}(123,132)$ with $\RLmax(\pi)=\{s_1,s_2,\dots,s_m\}$, where $s_{1}<s_{2}<\cdots<s_{m}=n$, is the concatenation
\begin{equation}
\pi=\overleftarrow{(s_{m-1},n)}\:n\:\overleftarrow{(s_{m-2},s_{m-1})}\:s_{m-1}\:\dots\:\overleftarrow{(s_{1},s_{2})\:}s_{2}\:\overleftarrow{(0,s_{1})}\:s_{1}.\label{e-123-132-rlmax}
\end{equation}
For example, if $\{2,5,9\}$ is the set of right-to-left maxima of a permutation $\pi$ avoiding $123$ and $132$, then $\pi=876943512$. Equation \eqref{e-123-132-rlmax} demonstrates that big descents of permutations in $\mathfrak{S}_{n}(123,132)$ can only occur at positions of right-to-left maxima.

Recall that $\mathcal{W}_{n}^{(1)}$ is the set of words in $\mathcal{W}_{n}$ that end with the letter 1. We turn the bijection $\pi\mapsto\RLmax(\pi)$ into a bijection from $\mathfrak{S}_{n}(123,132)$ to $\mathcal{W}_{n}^{(1)}$ by defining 
\[
\phi_{123,132}(\pi)=w_{1}w_{2}\dots w_{n}
\]
where
\[
w_{k}=\begin{cases}
1, & \text{if }k\in\RLmax(\pi),\\
0, & \text{otherwise,}
\end{cases}
\]
for each $k\in[n]$. Let us show that this bijection turns big descents into 10- and 011-factors. In the proof of the next lemma, we use the notation $s_i^\uparrow\coloneqq s_{i+1}$ and $s_{i+1}^\downarrow\coloneqq s_i$ to indicate that, when listing the right-to-left maxima of $\pi$ in increasing order, $s_{i+1}$ comes immediately after~$s_i$.

\begin{lem} \label{l-123-132}
For all $n\geq1$ and $\pi\in\mathfrak{S}_{n}(123,132)$, we have 
\[
\bdes(\pi)=(\occ_{10}+\occ_{011})(\phi_{123,132}(\pi)).
\]
\end{lem}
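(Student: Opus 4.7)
The plan is to exploit the explicit structure of $\pi$ given by \eqref{e-123-132-rlmax}. Writing $\RLmax(\pi)=\{s_1<s_2<\cdots<s_m=n\}$ with the convention $s_0\coloneqq 0$, recall that the paragraph preceding the lemma already establishes that big descents of $\pi$ can only occur at the positions of the right-to-left maxima $s_2,s_3,\dots,s_m$: the internal letters of each block $\overleftarrow{(s_{i-2},s_{i-1})}$ are consecutive integers in decreasing order (contributing only small descents), and the final letter of each nonempty block is followed by the larger letter $s_{i-1}$ (an ascent). Hence it suffices to determine, for each $i\in\{2,\dots,m\}$, whether the descent at the position of $s_i$ is a big descent, and to match those indices to $10$- and $011$-factors of $w\coloneqq\phi_{123,132}(\pi)$.

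Applying \eqref{e-123-132-rlmax}, the letter immediately after $s_i$ equals $s_{i-1}-1$ when $s_{i-1}>s_{i-2}+1$ (the preceding block is nonempty), and equals $s_{i-1}$ when $s_{i-1}=s_{i-2}+1$ (empty block). In the first case $(s_i,s_{i-1}-1)$ is always a big descent, while in the second case $(s_i,s_{i-1})$ is a big descent iff $s_i>s_{i-1}+1$. The dichotomy is detected in the word $w$ by the value $w_{s_{i-1}-1}$: it equals $0$ exactly when the preceding block is nonempty, and $1$ exactly when it is empty (with the understanding $w_0\coloneqq 0$ to cover the boundary case $i=2$, consistent with $s_0=0$).

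Next I describe $10$- and $011$-factors of $w$ in the same indexing. A factor $w_j w_{j+1}=10$ forces $j=s_{i-1}$ and $s_i>s_{i-1}+1$ for some $i\in\{2,\dots,m\}$, while a factor $w_{j}w_{j+1}w_{j+2}=011$ forces $j+1=s_{i-1}$, $s_i=s_{i-1}+1$, and $w_{s_{i-1}-1}=0$ (equivalently, $s_{i-1}>s_{i-2}+1$) for some such $i$. Each index $i$ therefore contributes at most one unit to $\occ_{10}(w)+\occ_{011}(w)$, mirroring the fact that each $i$ contributes at most one big descent. A short case check on the two booleans ``$s_i-s_{i-1}>1$'' and ``$s_{i-1}-s_{i-2}>1$'' verifies that in each of the four sub-cases, the contribution of $i$ to $\bdes(\pi)$ agrees with its contribution to $\occ_{10}(w)+\occ_{011}(w)$, proving the lemma. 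The only real care is in the bookkeeping of the boundary index $i=2$, which is absorbed cleanly by the conventions $s_0=0$ and $w_0=0$.
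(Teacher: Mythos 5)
Your proof is correct and follows essentially the same route as the paper's: the paper's dichotomy of big descents according to whether $\pi_k-1$ precedes or follows $\pi_k$ is exactly your boolean $s_i>s_{i-1}+1$, and both arguments match the two resulting types of big descents with $10$- and $011$-factors at the very same positions ($s_{i-1}$ and $s_{i-1}-1$, respectively). One minor caveat: the convention $w_0\coloneqq 0$ is not actually ``consistent'' with your stated dichotomy when $s_1=1$ (the block $\overleftarrow{(0,1)}$ is empty yet $w_0=0$), but this is harmless because a $011$-factor must start at a position $\geq 1$ and your parenthetical ``equivalently, $s_{i-1}>s_{i-2}+1$'' is the correct condition governing the case check.
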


\begin{proof}
Let $\pi\in\mathfrak{S}_{n}(123,132)$ and $w=w_{1}w_{2}\dots w_{n}=\phi_{123,132}(\pi)$. We prove two statements, from which the desired conclusion follows:
\begin{enumerate}
\item [\normalfont{(I)}]If $k\in[n-1]$ is a big descent of $\pi$ and $\pi_{k}-1$ appears before $\pi_{k}$ in $\pi$, then $w$ has a 10-factor starting at position $y=\pi_k^\downarrow$.
Conversely, if $w$ has a 10-factor starting at position $y\in[n-2]$ and $\pi_{k}=y^\uparrow$, 
then $k$ is a big descent of $\pi$ and $\pi_{k}-1$ appears before $\pi_{k}$ in $\pi$.
\item [\normalfont{(II)}]We have that $k\in[n-1]$ is a big descent of $\pi$ and $\pi_{k}-1$ appears after $\pi_{k}$ in $\pi$ if and only if $w$ has a 011-factor starting at position $\pi_{k}-2$.
\end{enumerate}
First, suppose that $k\in[n-1]$ is a big descent of $\pi$ and $\pi_{k}-1$ appears before $\pi_{k}$ in $\pi$. From \eqref{e-123-132-rlmax}, we see that $\pi_{k}$ is a right-to-left maximum and that the decreasing subsequence of non-right-to-left maxima preceding $\pi_{k}$ has length at least one (since it contains $\pi_{k}-1$). Therefore, if $y=\pi_{k}^\downarrow$, then $w_{y}=1$ and $w_{y+1}=0$.

Conversely, suppose that $w_{y}=1$ and $w_{y+1}=0$ for some $y\in[n-2]$, and let $\pi_{k}=y^\uparrow$.
By \eqref{e-123-132-rlmax}, $\pi_{k}-1$ appearing after $\pi_{k}$ in $\pi$ would imply $\pi_{k}-1=y$, which contradicts the fact that $y+1$ is not a right-to-left maximum. We deduce that $\pi_{k}-1$ appears before $\pi_{k}$. Furthermore, by \eqref{e-123-132-rlmax}, we see that $k<n$ (as $y$ appears after $\pi_{k}$), and since $\pi_{k}$ is a right-to-left maximum and $\pi_{k+1}\neq\pi_{k}-1$ (as $\pi_{k}-1$ appears before $\pi_{k}$), we deduce that $\pi_{k+1}<\pi_{k}-1$, thus completing the proof of (I).

Next, suppose that $k$ is a big descent of $\pi$ and $\pi_{k}-1$ appears after $\pi_{k}$ in $\pi$. From \eqref{e-123-132-rlmax}, we see that both $\pi_{k}$ and $\pi_{k}-1$ are right-to-left maxima of $\pi$. Because $k$ is a big descent, we also know that the decreasing subsequence of non-right-to-left maxima between $\pi_{k}$ and $\pi_{k}-1$ is nonempty, and in particular must contain $\pi_{k}-2$. Hence, we have $w_{\pi_{k}-2}=0$, $w_{\pi_{k}-1}=1$, and $w_{\pi_{k}}=1$. The converse follows from similar reasoning, proving~(II).
\end{proof}

The last step in the proof of Theorem \ref{t-123-132} is the enumeration of words in $\mathcal{W}_{n}^{(1)}$ with respect to the number of $10$- and $011$-factors. For any fixed binary word $u$, denote by $\mathcal{W}_n^{(u)}$ the set of binary words of length $n$ that end with $u$, and let 
$$W^{(u)}\coloneqq W^{(u)}(t,x)=\sum_{n=0}^{\infty}\sum_{w\in\mathcal{W}_{n}^{(u)}}t^{(\occ_{10}+\occ_{011})(w)}x^{n}.$$

\begin{prop} \label{p-10-011-gf}
We have 
\[
W^{(1)}=\frac{x(1-(1-t)x^{2})}{1-2x+(1-t)x^{2}+t(1-t)x^{3}}.
\]
\end{prop}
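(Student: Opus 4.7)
The plan is to derive a small system of linear functional equations by refining binary words according to their last one or two letters, then eliminate auxiliary unknowns to solve for $W^{(1)}$. Following the notation already introduced, for each nonempty binary word $u$ let $W^{(u)}=\sum_{w\in\mathcal{W}_{n}^{(u)},\,n\geq0}t^{(\occ_{10}+\occ_{011})(w)}x^{n}$. Partitioning words of length $\geq 2$ by their last two letters (and adding in the single-letter words) gives the bookkeeping identities
\[
W^{(0)}=x+W^{(00)}+W^{(10)}\qquad\text{and}\qquad W^{(1)}=x+W^{(01)}+W^{(11)}.
\]

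The next step is to analyze how appending a single letter to an existing word affects the statistic $\occ_{10}+\occ_{011}$. Appending $0$ to a word ending in $1$ creates exactly one new $10$-factor and no $011$-factor, while appending $1$ to a word whose last two letters are $01$ creates exactly one new $011$-factor and no $10$-factor; in all other cases no new factor is created. This yields
\[
W^{(00)}=xW^{(0)},\quad W^{(10)}=tx\,W^{(1)},\quad W^{(01)}=xW^{(0)},\quad W^{(11)}=x^{2}+tx\,W^{(01)}+xW^{(11)},
\]
where the isolated $x^{2}$ term in the last equation accounts for the length-$2$ word $11$, which has no length-$2$ predecessor ending in $01$ or $11$.

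The final step is pure linear algebra. Substituting $W^{(00)}$, $W^{(10)}$, $W^{(01)}$, $W^{(11)}$ into the two bookkeeping identities and clearing denominators produces the coupled pair
\[
(1-x)\,W^{(0)}=x\bigl(1+t\,W^{(1)}\bigr),\qquad (1-x)\,W^{(1)}=x\bigl(1-(1-t)x\bigr)W^{(0)}+x.
\]
Eliminating $W^{(0)}$ from this $2\times 2$ linear system and simplifying both sides yields
\[
\bigl[(1-x)^{2}-tx^{2}+t(1-t)x^{3}\bigr]W^{(1)}=x\bigl(1-(1-t)x^{2}\bigr),
\]
which rearranges to the claimed formula. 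The calculation is entirely routine; the only subtlety worth flagging is the careful treatment of the length-$1$ and length-$2$ base cases in the suffix-extension recursions, which is precisely where the inhomogeneous $x$ and $x^{2}$ terms enter the equations above.
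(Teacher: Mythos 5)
Your proof is correct and takes essentially the same approach as the paper: a suffix-based system of linear functional equations (the paper uses the four unknowns $W^{(0)},W^{(1)},W^{(01)},W^{(11)}$; your equations for $W^{(00)}$ and $W^{(10)}$ combine to give exactly the paper's single equation for $W^{(0)}$), followed by elimination. The resulting coupled system and the final computation match the paper's.
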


\begin{proof}
We claim that $W^{(1)}$ , $W^{(0)}$, $W^{(01)}$, and $W^{(11)}$ satisfy the system of equations
\begin{align*}
    W^{(1)}&=x+W^{(01)}+W^{(11)}, &W^{(0)}&=(1+W^{(0)}+t W^{(1)})x, \\
    W^{(01)}&=W^{(0)}x, \quad\text{and} &W^{(11)}&=(x+tW^{(01)}+W^{(11)})x.
\end{align*}

The equation for $W^{(1)}$ is clear since any binary word ending with $1$ is either the word $1$ itself (contributing $x$ to the generating function), or otherwise it ends with $01$ or with $11$.
For $W^{(0)}$, note that a binary word ending with $0$ is of the form $w0$, where $w$ is an arbitrary binary word; if $w$ ends with $1$, then $w0$ has an additional $10$-factor.
The equation for $W^{(01)}$ is straightforward, since appending a $1$ to a word ending with $0$ does not create any $10$- or $011$-factors.
For $W^{(11)}$, note that a binary word ending with $11$ is of the form $w1$, where $w$ is word ending with $1$; if $w$ ends with $01$, then $w1$ has an additional $011$-factor.

Solving the system, we obtain the stated expression for $W^{(1)}$.
\end{proof}

Lemma \ref{l-123-132} implies that $B(t,x;123,132)=1+W^{(1)}$, where the term $1$ accounts for the empty permutation. This proves Theorem~\ref{t-123-132}.

\subsection{\label{ss-132-213}The patterns 132 and 213}

In this section we will show that $B(t,x;132,213)=B(t,x;123,132)$, 
giving our second non-trivial $\bdes$-Wilf equivalence.

\begin{thm} \label{t-132-213}
We have 
\[
B(t,x;132,213)=\frac{1-x+(1-t)x^{2}-(1-t)^{2}x^{3}}{1-2x+(1-t)x^{2}+t(1-t)x^{3}}.
\]
\end{thm}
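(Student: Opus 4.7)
The plan is to mirror the approach of Section~\ref{ss-123-132} by constructing a bijection $\phi_{132,213}\colon\mathfrak{S}_n(132,213)\to\mathcal{W}_n^{(1)}$ under which $\bdes$ corresponds to $\occ_{10}+\occ_{011}$. Once this is established, Proposition~\ref{p-10-011-gf} immediately produces the stated generating function. As a byproduct, the composition $\phi_{132,213}^{-1}\circ\phi_{123,132}$ realizes explicitly the non-trivial $\bdes$-Wilf equivalence between $\{123,132\}$ and $\{132,213\}$.

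First I would prove the following structural decomposition: every nonempty $\pi\in\mathfrak{S}_n(132,213)$ writes uniquely as $\pi=\sigma\,1\,\tau$, where $\sigma$ uses precisely the largest $|\sigma|$ letters of $[n]$, $\tau$ is the increasing sequence $2,3,\dots,n-|\sigma|$, and $\std(\sigma)\in\mathfrak{S}_{|\sigma|}(132,213)$. The two key observations are: if $\pi_i$ appears before $1$ and $\pi_j$ after $1$ in $\pi$, then $\pi_i>\pi_j$ (otherwise $\pi_i\,1\,\pi_j$ is a $213$-pattern); and the letters after $1$ are increasing (otherwise, together with $1$, they form a $132$-pattern). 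Since $\sigma$ uses a block of consecutive values, its big descents are preserved under standardization: $\bdes(\sigma)=\bdes(\std(\sigma))$. With this in hand, I would define $\phi_{132,213}$ recursively by setting $\phi_{132,213}(\varepsilon)\coloneqq\varepsilon$ and
\[
\phi_{132,213}(\pi)\coloneqq \phi_{132,213}(\std(\sigma))\cdot 0^{|\tau|}\cdot 1
\]
for nonempty $\pi=\sigma\,1\,\tau$. Uniqueness of the decomposition together with a short induction shows this is a bijection onto $\mathcal{W}_n^{(1)}$.

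The main work is then to prove by induction that $\bdes(\pi)=(\occ_{10}+\occ_{011})(\phi_{132,213}(\pi))$, which reduces to analyzing the big descent contributed at position $|\sigma|$ (the boundary between $\sigma$ and $1$) and the factors created when appending $0^{|\tau|}\cdot 1$ to $\phi_{132,213}(\std(\sigma))$. Three cases arise: (i)~$|\sigma|=0$, where there is no boundary and the word $0^{n-1}\cdot 1$ contains no $10$- or $011$-factor; (ii)~$|\sigma|\geq 1$ and $|\tau|\geq 1$, where position $|\sigma|$ is automatically a big descent (the last letter of $\sigma$ is $\geq n-|\sigma|+1\geq 3$), and appending $0^{|\tau|}\cdot 1$ to a word ending in $1$ creates exactly one new $10$-factor and no new $011$-factor; (iii)~$|\sigma|\geq 1$ and $|\tau|=0$, where position $|\sigma|$ is a big descent iff $\std(\sigma)$ does not end in $1$, and appending a single $1$ creates a new $011$-factor (and no new $10$-factor) precisely in the same case. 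The main obstacle is the equivalence in case~(iii): ``$\std(\sigma)$ does not end in $1$'' iff ``$\phi_{132,213}(\std(\sigma))$ ends in $01$''. This is handled by unwinding one more level of the recursive definition of $\phi_{132,213}$---writing $\std(\sigma)=\sigma'\,1\,\tau'$, the recursion gives $\phi_{132,213}(\std(\sigma))=\phi_{132,213}(\std(\sigma'))\cdot 0^{|\tau'|}\cdot 1$, and $\std(\sigma)$ fails to end in $1$ precisely when $|\tau'|\geq 1$, which is exactly when this word ends in $01$. Combining these steps yields $B(t,x;132,213)=1+W^{(1)}(t,x)$, and Proposition~\ref{p-10-011-gf} finishes the proof.
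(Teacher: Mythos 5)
Your proof is correct, and it shares the paper's overall architecture---construct a bijection $\mathfrak{S}_n(132,213)\to\mathcal{W}_n^{(1)}$ sending $\bdes$ to $\occ_{10}+\occ_{011}$, then invoke Proposition~\ref{p-10-011-gf}---but the bijection itself and the verification are genuinely different. The paper uses the Simion--Schmidt map $\pi\mapsto\RLmax(\pi)$, records the right-to-left maxima as the positions of the $1$'s, and reads off the statistic correspondence directly from the explicit global form \eqref{e-132-213-rlmax} (the paper in fact omits this verification, citing its similarity to Lemma~\ref{l-123-132}). You instead use the left-factor decomposition $\pi=\sigma\,1\,\tau$ (with $\sigma$ on the top letters and $\tau$ increasing), define the word recursively as $\phi(\std(\sigma))\cdot 0^{|\tau|}\cdot 1$, and prove the statistic correspondence by induction; note that your map is not the same as the paper's (e.g., $231\mapsto 011$ for you versus $101$ for the paper), even though both carry $\bdes$ to $\occ_{10}+\occ_{011}$. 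Your three-case analysis is sound: the only delicate point, the equivalence in case~(iii) between ``$\std(\sigma)$ does not end in $1$'' and ``$\phi(\std(\sigma))$ ends in $01$,'' is correctly resolved by unwinding one level of the recursion (and the degenerate subcase $|\sigma|=1$, where the word is too short to contain a $011$-factor, is consistent since then $\std(\sigma)=1$ ends in $1$). What your approach buys is a self-contained, fully verified proof of the analogue of Lemma~\ref{l-132-213} without appealing to the $\RLmax$ machinery; what the paper's approach buys is a uniform treatment of $\{123,132\}$, $\{132,213\}$, and $\{231,321\}$ through a single family of bijections, making the parallel with Section~\ref{ss-123-132} transparent.
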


The proof of this theorem is similar to that of Theorem \ref{t-123-132}.
As shown by Simion and Schmidt \cite[proof of Proposition 12]{Simion1985}, the map $\pi\mapsto\RLmax(\pi)\backslash\{n\}$ is also a bijection from $\mathfrak{S}_{n}(132,213)$ to $2^{[n-1]}$, hence $\pi\mapsto\RLmax(\pi)$ is a bijection from $\mathfrak{S}_{n}(132,213)$ to the set of subsets of $[n]$ that contain $n$. Given $a,b\in\mathbb{Z}$ satisfying $a<b$, let $\overrightarrow{(a,b)}$ denote the increasing word
\[
\overrightarrow{(a,b)}\coloneqq(a+1)\:(a+2)\:\dots\:(b-1).
\]
Then the unique permutation $\pi\in\mathfrak{S}_{n}(132,213)$ satisfying 
$\RLmax(\pi)=\{s_1,s_2,\dots,s_m\}$, where $s_{1}<s_{2}<\cdots<s_{m}=n$, is the concatenation
\begin{equation}
\pi=\overrightarrow{(s_{m-1},n)}\:n\:\overrightarrow{(s_{m-2},s_{m-1})}\:s_{m-1}\:\dots\:\overrightarrow{(s_{1},s_{2})}\:s_{2}\:\overrightarrow{(0,s_{1})}\:s_{1}.\label{e-132-213-rlmax}
\end{equation}
Comparing this with \eqref{e-123-132-rlmax}, the only difference is that each sequence of non-right-to-left maxima is increasing rather than decreasing.

We define $\phi_{132,213}\colon\mathfrak{S}_{n}(132,213)\rightarrow\mathcal{W}_{n}^{(1)}$ in the same way as $\phi_{123,132}$, by having the positions of the ones in the word record the right-to-left maxima of the permutation. The proof of the next lemma is very similar to that of Lemma \ref{l-123-132}, so we omit it.

\begin{lem} \label{l-132-213}
For all $n\geq1$ and $\pi\in\mathfrak{S}_{n}(132,213)$, we have 
\[
\bdes(\pi)=(\occ_{10}+\occ_{011})(\phi_{132,213}(\pi)).
\]
\end{lem}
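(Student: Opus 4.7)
The plan is to mirror the proof of Lemma~\ref{l-123-132}. Write $w = \phi_{132,213}(\pi)$ and $\RLmax(\pi) = \{s_1 < s_2 < \cdots < s_m\}$ with $s_m = n$. The core of the argument is a two-part dichotomy: (I) $k \in [n-1]$ is a big descent of $\pi$ with $\pi_k - 1$ appearing before $\pi_k$ in $\pi$ if and only if $w$ has a $10$-factor starting at position $\pi_k^{\downarrow}$ (and, conversely, every $10$-factor of $w$ at some $y \in [n-2]$ arises in this way from the big descent $k$ with $\pi_k = y^{\uparrow}$); and (II) $k$ is a big descent with $\pi_k - 1$ appearing after $\pi_k$ in $\pi$ if and only if $w$ has a $011$-factor starting at position $\pi_k - 2$. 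Since every big descent falls into exactly one of these two cases, the identity $\bdes(\pi) = (\occ_{10} + \occ_{011})(w)$ follows.

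The key step is a structural analysis using \eqref{e-132-213-rlmax}. Within each increasing run $R_j \coloneqq \overrightarrow{(s_{j-1}, s_j)}$ consecutive letters differ by exactly~$1$, and the transition at the end of $R_j$ up to $s_j$ is an ascent, so all big descents of $\pi$ occur at positions of right-to-left maxima $s_j$ with $j \geq 2$. A short case analysis---splitting on whether $R_{j-1}$ is empty---shows that the big-descent condition depends only on the gap sizes $s_{j-1} - s_{j-2}$ and $s_j - s_{j-1}$, and matches the one arising in the proof of Lemma~\ref{l-123-132}. For claim~(I), having $\pi_k - 1$ appear before $\pi_k$ forces $R_j$ to be nonempty, so $s_j - 1 \in R_j$ sits immediately before $s_j$ in $\pi$; this is exactly the condition $s_j \geq s_{j-1} + 2$, equivalent to $w_{s_{j-1}} = 1$ and $w_{s_{j-1}+1} = 0$, giving a $10$-factor at $\pi_k^{\downarrow} = s_{j-1}$. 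For claim~(II), having $\pi_k - 1$ appear after $\pi_k$ forces $R_j$ empty and $s_j - 1 = s_{j-1}$; the big-descent condition then forces $s_j - 2$ not to be a right-to-left maximum, yielding a $011$-factor at position $s_j - 2 = \pi_k - 2$. Both converses follow by reversing the implications, with the convention $s_0 \coloneqq 0$.

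The one subtle point to verify---and the only place where a naive transfer from Lemma~\ref{l-123-132} could fail---is that the switch from the decreasing runs $\overleftarrow{(a,b)}$ of \eqref{e-123-132-rlmax} to the increasing runs $\overrightarrow{(a,b)}$ of \eqref{e-132-213-rlmax} does not alter the characterization of big descents or their translation to the word $w$. This is so because $w$ encodes only the set $\RLmax(\pi)$, which is invariant under the switch, and the location of big descents in $\pi$ depends only on this set together with the data of which intervening runs are empty---not on the internal ordering within a run. Once this observation is in place, the remaining details are routine and I expect no further obstacle.
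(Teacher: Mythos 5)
Your proposal is correct and follows exactly the route the paper intends: the paper omits this proof, stating only that it is ``very similar to that of Lemma~\ref{l-123-132},'' and you supply precisely that adaptation, correctly identifying the one point that needs checking (that replacing the decreasing runs of \eqref{e-123-132-rlmax} by the increasing runs of \eqref{e-132-213-rlmax} changes the letter following each right-to-left maximum $s_j$ from $s_{j-1}-1$ to $s_{j-2}+1$, but leaves the big-descent criterion in terms of the gaps $s_j-s_{j-1}$ and $s_{j-1}-s_{j-2}$ unchanged). No gaps.
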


Lemmas \ref{l-123-132} and \ref{l-132-213} imply that $\phi_{132,213}^{-1}\circ\phi_{123,132}$ is a $\bdes$-preserving bijection between
$\mathfrak{S}_{n}(123,132)$ and $\mathfrak{S}_{n}(132,213)$, so Theorem \ref{t-132-213}  follows from Theorem \ref{t-123-132}.

\subsection{The patterns 231 and 321}

Our next result gives a formula for the generating function $B(t,x;231,321)$.

\begin{thm} \label{t-231-321}
We have 
\[
B(t,x;231,321)=\frac{1-x}{1-2x+(1-t)x^{3}}.
\]
\end{thm}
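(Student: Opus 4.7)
The plan is to derive a one-step recursive decomposition of $\{231,321\}$-avoiding permutations based on the position of the maximum letter, and then translate it into a functional equation for $B(t,x;231,321)$.

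First, I would establish the following structural observation: if $\pi\in\mathfrak{S}_n(231,321)$ with $n\ge 1$ and the letter $n$ sits at position $k+1$, then $\pi=\sigma\,n\,\tau$ where $\sigma$ is a $\{231,321\}$-avoiding permutation of $\{1,\dots,k\}$ and $\tau=(k{+}1)(k{+}2)\cdots(n{-}1)$ is the increasing sequence of the remaining letters. Avoiding $231$ forces every letter preceding $n$ to be smaller than every letter following $n$ (otherwise such a pair together with $n$ forms a $231$), so the positions before and after $n$ are filled by $\{1,\dots,k\}$ and $\{k{+}1,\dots,n{-}1\}$ respectively; avoiding $321$ then forces $\tau$ to be increasing, since any descent within $\tau$ together with $n$ would give a $321$. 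The prefix $\sigma$ inherits both pattern avoidances.

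Next I would tabulate how this decomposition interacts with $\bdes$. All big descents of $\sigma$ are preserved; position $k$ is necessarily an ascent since $\pi_k\le k<n$; the increasing run $\tau$ contributes nothing; and the position of $n$ contributes a big descent exactly when $|\tau|\ge 2$, because in that case $\pi_{k+2}=k+1$ and $n-(k+1)\ge 2$. Defining $c_j(t)\coloneqq 1$ for $j\in\{1,2\}$ and $c_j(t)\coloneqq t$ for $j\ge 3$, I obtain the convolutional recurrence
\[
B_n(t;231,321)=\sum_{k=0}^{n-1}c_{n-k}(t)\,B_k(t;231,321)\qquad(n\ge 1).
\]

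Finally, I would convert this into a generating function identity. With $C(t,x)\coloneqq\sum_{j\ge 1}c_j(t)x^j=x+x^2+\tfrac{tx^3}{1-x}$, the recurrence reads $B(t,x;231,321)-1=C(t,x)\,B(t,x;231,321)$, hence $B(t,x;231,321)=1/(1-C(t,x))$; clearing the inner denominator then yields the claimed formula. I do not anticipate a serious obstacle. The only spot that demands care is the $|\tau|=1$ case, where $\pi_{k+1}=n$ is followed by $\pi_{k+2}=n-1$, producing a small descent rather than a big descent---this is precisely why $c_2(t)=1$ rather than $t$, and it is what shapes the final denominator.
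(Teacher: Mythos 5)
Your argument is correct, and it takes a genuinely different route from the paper. The paper proves this theorem bijectively: it maps $\pi\in\mathfrak{S}_{n}(231,321)$ to the binary word in $\mathcal{W}_{n}^{(1)}$ recording the positions of the left-to-right maxima of $\pi$, shows that $\bdes(\pi)$ equals the number of $001$-factors of that word (Lemma \ref{l-231-321}), and then identifies $001$-factors with maximal runs of $0$'s of length at least $2$ in the truncated word, so that the known generating function \eqref{e-rungf} for $\run_{2}$ over binary words yields the formula. You instead decompose $\pi=\sigma\,n\,\tau$ by the position of the letter $n$, observe that $231$-avoidance forces the letters of $\sigma$ to be $\{1,\dots,k\}$ and $321$-avoidance forces $\tau$ to be the increasing run $(k{+}1)\cdots(n{-}1)$, and track the single possible new big descent at the position of $n$ (present exactly when $\left|\tau\right|\geq2$); the resulting convolution $B_{n}=\sum_{k}c_{n-k}B_{k}$ with $C(t,x)=x+x^{2}+tx^{3}/(1-x)$ gives $B=1/(1-C)$, which simplifies to the stated formula. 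I verified the details: the converse direction of the decomposition holds (no cross-occurrences of $231$ or $321$ arise between $\sigma$, $n$, and $\tau$), the case analysis $c_{1}=c_{2}=1$, $c_{j}=t$ for $j\geq3$ is right, and $(1-x)(1-x-x^{2})-tx^{3}=1-2x+(1-t)x^{3}$. Your approach is more self-contained and elementary, needing no external run-counting formula; the paper's approach buys a uniform treatment via binary-word bijections consistent with the other pattern pairs in that section, plus an explicit statistic-preserving correspondence that exposes the connection to OEIS entry A334658 and to runs in binary words.
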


See Table \ref{tb-231-321} for the first ten polynomials $B_{n}(t;231,321)$.

\begin{table}
\begin{centering}
\begin{tabular}{|c|c|c|c|c|}
\cline{1-2} \cline{2-2} \cline{4-5} \cline{5-5} 
$n$ & $B_{n}(t;231,321)$ & \quad{} & $n$ & $B_{n}(t;231,321)$\tabularnewline
\cline{1-2} \cline{2-2} \cline{4-5} \cline{5-5} 
$0$ & $1$ &  & $5$ & $8+8t$\tabularnewline
\cline{1-2} \cline{2-2} \cline{4-5} \cline{5-5} 
$1$ & $1$ &  & $6$ & $13+18t+t^{2}$\tabularnewline
\cline{1-2} \cline{2-2} \cline{4-5} \cline{5-5} 
$2$ & $2$ &  & $7$ & $21+38t+5t^{2}$\tabularnewline
\cline{1-2} \cline{2-2} \cline{4-5} \cline{5-5} 
$3$ & $3+t$ &  & $8$ & $34+76t+18t^{2}$\tabularnewline
\cline{1-2} \cline{2-2} \cline{4-5} \cline{5-5} 
$4$ & $5+3t$ &  & $9$ & $55+147t+53t^{2}+t^{3}$\tabularnewline
\cline{1-2} \cline{2-2} \cline{4-5} \cline{5-5} 
\end{tabular}
\par\end{centering}
\caption{\label{tb-231-321}Distribution of $\bdes$ over $\mathfrak{S}_{n}(231,321)$ for $0\leq n\leq9$}
\end{table}

The permutations in $\mathfrak{S}_{n}(231,321)$ are precisely the reverses of the permutations in $\mathfrak{S}_{n}(123,132)$, studied in Section \ref{ss-123-132}. As such, we get a bijection $\pi\mapsto\LRmax(\pi)$ from $\mathfrak{S}_{n}(231,321)$ to the set of subsets of $[n]$ containing $\{n\}$, where $\LRmax(\pi)$ denotes the set of \textit{left-to-right maxima} of $\pi$: letters $\pi_{k}$ satisfying $\pi_{j}<\pi_{k}$ for all $j<k$. As in \eqref{e-123-132-rlmax}, the unique permutation $\pi\in\mathfrak{S}_{n}(123,132)$ satisfying $\LRmax(\pi)=\{s_1,s_2,\dots,s_m\}$, where $s_{1}<s_{2}<\cdots<s_{m}=n$, is the concatenation
\begin{equation}
\pi=s_{1}\:\overrightarrow{(0,s_{1})}\:s_{2}\:\overrightarrow{(s_{1},s_{2})}\:\dots\:s_{m-1}\:\overrightarrow{(s_{m-2},s_{m-1})}\:n\:\overrightarrow{(s_{m-1},n)}.\label{e-231-321-lrmax}
\end{equation}
Moreover, we obtain a bijection $\phi_{231,321}\colon\mathfrak{S}_{n}(231,321)\rightarrow\mathcal{W}_{n}^{(1)}$ by defining
\[
\phi_{231,321}(\pi)=w_{1}w_{2}\dots w_{n}\in\mathcal{W}_{n}^{(1)}
\]
where
\[
w_{k}=\begin{cases}
1, & \text{if }k\in\LRmax(\pi),\\
0, & \text{otherwise,}
\end{cases}
\]
for each $k\in[n]$.

\begin{lem} \label{l-231-321}
For all $n\geq1$ and $\pi\in\mathfrak{S}_{n}(231,321)$, we have $\bdes(\pi)=\occ_{001}(\phi_{231,321}(\pi))$.
\end{lem}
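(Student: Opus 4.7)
The plan is to exploit the explicit structure of $\pi\in\mathfrak{S}_n(231,321)$ given by \eqref{e-231-321-lrmax}. Writing $\LRmax(\pi)=\{s_1,s_2,\dots,s_m\}$ with $s_1<s_2<\cdots<s_m=n$ and adopting the convention $s_0\coloneqq0$, the decomposition shows that $\pi$ consists of $m$ blocks, where the $i$th block is $s_i$ followed by the increasing run $\overrightarrow{(s_{i-1},s_i)}$. Since each such run is increasing and any left-to-right maximum $s_{i+1}$ is larger than the preceding letter $s_i-1$, the only possible descents of $\pi$ occur at the positions immediately following an $s_i$, and such a descent exists exactly when $\overrightarrow{(s_{i-1},s_i)}$ is nonempty, i.e., $s_i-s_{i-1}\ge 2$.

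The first step is therefore to determine when such a descent is a big descent. At a position with $\pi_k=s_i$ and $\pi_{k+1}=s_{i-1}+1$ (the first letter of the run $\overrightarrow{(s_{i-1},s_i)}$, or the first letter after $s_i$ in $\pi$), we have $\pi_k>\pi_{k+1}+1$ iff $s_i>s_{i-1}+2$, i.e., $s_i-s_{i-1}\ge 3$. Hence
\[
\bdes(\pi)=\#\{\,i\in[m]:s_i-s_{i-1}\ge 3\,\}.
\]

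The second step is to translate this count into $001$-factors of $w=\phi_{231,321}(\pi)$. Since $w_k=1$ exactly when $k\in\LRmax(\pi)$, a factor $w_{j}w_{j+1}w_{j+2}=001$ is equivalent to saying that $j+2=s_i$ for some $i$, that $j+1$ and $j$ are not left-to-right maxima, and that $j\ge 1$. The latter three conditions together say precisely that the previous left-to-right maximum $s_{i-1}$ (with the convention $s_0=0$ corresponding to the boundary of the word) satisfies $s_{i-1}\le s_i-3$. Thus $\occ_{001}(w)$ counts exactly the indices $i\in[m]$ with $s_i-s_{i-1}\ge 3$, which by the previous paragraph equals $\bdes(\pi)$.

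There is no real obstacle here beyond carefully handling the boundary case $i=1$ (where $s_0=0$). The convention makes the initial big descent---arising when $\pi_1=s_1\ge 3$---correspond to a $001$-factor starting at position $1$ of $w$, which works out because $w_1w_2=00$ precisely when $s_1\ge 3$. With this boundary case checked, the bijective correspondence between big descents of $\pi$ and $001$-factors of $w$ is complete.
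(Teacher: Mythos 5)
Your proof is correct and takes essentially the same approach as the paper: both rely on the explicit form \eqref{e-231-321-lrmax} and identify big descents with $001$-factors of $w$ ending at a position $s_i\in\LRmax(\pi)$. Your reformulation via the gap condition $s_i-s_{i-1}\ge 3$ (with $s_0=0$) is a mild repackaging of the paper's direct correspondence $w_{\pi_k-2}=w_{\pi_k-1}=0$, $w_{\pi_k}=1$, and your explicit treatment of the boundary case $i=1$ is a nice touch the paper leaves implicit.
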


\begin{proof}
Let $\pi\in\mathfrak{S}_{n}(231,321)$ and $w=w_{1}w_{2}\dots w_{n}=\phi_{231,321}(\pi)$. We claim that $k\in[n-1]$ is a big descent of $\pi$ if and only if $w_{\pi_{k}-2}=0$, $w_{\pi_{k}-1}=0$, and $w_{\pi_{k}}=1$.

Suppose that $k$ is a big descent of $\pi$. From \eqref{e-231-321-lrmax}, we see that $\pi_{k}$ is a left-to-right maximum of $\pi$, and that $\pi_{k+1}$ is part of the sequence of non-left-to-right maxima following $\pi_{k}$. Since $\pi_{k+1}<\pi_{k}-1$, both $\pi_{k}-2$ and $\pi_{k}-1$ are part of this sequence. Therefore, we have $w_{\pi_{k}-2}=0$, $w_{\pi_{k}-1}=0$, and $w_{\pi_{k}}=1$. The proof of the converse is similar.
\end{proof}

Given a binary word $w\in\mathcal{W}_{n}$, let $\run_{r}(w)$ denote the number of its maximal runs of 0's having length at least $r$. For example, we have $\run_{2}(0000110111001)=2$. Let 
\[
R^{(r)}(t,x)\coloneqq\sum_{n=0}^{\infty}\sum_{w\in\mathcal{W}_{n}}t^{\run_{r}(w)}x^{n}.
\]
It is known \cite[Chapter V, Equation (18)]{Flajolet2009} that 
\begin{equation}
R^{(r)}(t,x)=\frac{1-(1-t)x^{r}}{1-x}\cdot\frac{1}{1-x\frac{1-(1-t)x^{r}}{1-x}}\label{e-rungf}
\end{equation}
for all $r\geq2$. Given any $w\in\mathcal{W}_{n}^{(1)}$, if we let $w^{\prime}\in\mathcal{W}_{n-1}$ be the word obtained by removing the last letter, we have $\occ_{001}(w)=\run_{2}(w^{\prime})$. Thus, by Lemma \ref{l-231-321}, we obtain $B(t,x;231,321)$ by setting $r=2$ in \eqref{e-rungf}, multiplying by $x$, and adding 1; doing so proves Theorem \ref{t-231-321}.

\subsection{\label{ss-others}Other pairs of patterns}

Finally, we prove the formulas in the last four rows of Table \ref{tb-enum}. We begin with $\Pi=\{123,231\}$ and $\Pi=\{132,321\}$; permutations in $\mathfrak{S}_{n}(\Pi)$ for either of these $\Pi$ have at most one big descent.

\begin{prop} \label{p-123-231}
For all $n\geq1$, we have
\[
b_{n,k}(123,231)=\begin{cases}
n, & \text{if }k=0,\\
{n-1 \choose 2}, & \text{if }k=1,\\
0, & \text{otherwise.}
\end{cases}
\]
\end{prop}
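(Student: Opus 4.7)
The plan is to exploit the very restrictive structure of $\mathfrak{S}_n(123,231)$. Let $\pi_k=n$. Avoiding $231$ forces $\{\pi_1,\dots,\pi_{k-1}\}=\{1,\dots,k-1\}$ and $\{\pi_{k+1},\dots,\pi_n\}=\{k,\dots,n-1\}$; avoiding $123$ then forces the prefix $\pi_1\cdots\pi_{k-1}$ to be decreasing (any ascending prefix pair, together with $n$, would form a $123$) and, when $k\geq 2$, forces the suffix $\pi_{k+1}\cdots\pi_n$ to be decreasing as well (any ascending suffix pair, together with any smaller prefix letter, would form a $123$). Consequently $\mathfrak{S}_n(123,231)$ partitions into Family~A, consisting of permutations $\pi=n\sigma$ with $\std(\sigma)\in\mathfrak{S}_{n-1}(123,231)$, and Family~B, consisting of the $n-1$ permutations $(k-1)(k-2)\cdots 1\cdot n\cdot(n-1)(n-2)\cdots k$ for $k\in\{2,\dots,n\}$.

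Each Family-B permutation is a concatenation of two decreasing runs of consecutive integers, so every descent is small and $\bdes=0$. For Family~A, since the letters of $\sigma$ are consecutive, $\bdes(\sigma)=\bdes(\std(\sigma))$, and the only transition introduced by prepending $n$ is between $n$ and $\sigma_1$, giving
\[
\bdes(n\sigma)=\bdes(\sigma)+[\sigma\ne\varepsilon\text{ and }\sigma_1\ne n-1].
\]
Using this identity, I would prove by induction on $n$ the joint statement that $\bdes(\pi)\le 1$ for every $\pi\in\mathfrak{S}_n(123,231)$ and that $\bdes(\pi)=1$ implies $\pi_1=n$. Both pieces follow from the dichotomy: Family~B always gives $\bdes=0$, so if $\bdes(\sigma)=1$ then $\sigma$ must lie in Family~A for its own length, meaning $\sigma_1=\max(\sigma)=n-1$; the identity above then yields $\bdes(n\sigma)=\bdes(\sigma)=1$. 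This handles the ``otherwise'' case of the stated formula.

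It remains to count $b_{n,0}$ and $b_{n,1}$. The unique Family-A permutation with $\bdes=0$ is the fully decreasing permutation $n(n-1)\cdots 1$ (obtained by iteratively enforcing $\sigma_1=\max(\sigma)$), so $b_{n,0}(123,231)=1+(n-1)=n$. For the $\bdes=1$ count, splitting Family~A according to whether $\bdes(\sigma)=0$ with $\sigma_1\ne n-1$, or $\bdes(\sigma)=1$ with $\sigma_1=n-1$, and letting $h_m:=|\{\sigma\in\mathfrak{S}_m(123,231):\sigma_1=m,\;\bdes(\sigma)=1\}|$, we obtain
\[
b_{n,1}(123,231)=(b_{n-1,0}-1)+h_{n-1}.
\]
The analogous decomposition $\sigma=m\tau$ yields the recurrence $h_m=(b_{m-1,0}-1)+h_{m-1}=(m-2)+h_{m-1}$ with base case $h_1=0$, hence $h_m=\binom{m-1}{2}$. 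Substituting gives $b_{n,1}(123,231)=(n-2)+\binom{n-2}{2}=\binom{n-1}{2}$, as desired. The main obstacle is carefully stating the structural dichotomy and tracking the boundary cases (empty suffix, $k=1$ vs.\ $k\geq 2$); once those are fixed, the induction and bookkeeping are routine.
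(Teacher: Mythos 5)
Your proposal is correct, and it takes a genuinely different route from the paper. The paper decomposes each $\pi\in\mathfrak{S}_{n}(123,231)$ around the position of the letter $1$, writing $\pi=\sigma1\tau$ with $\sigma$ and $\tau$ both decreasing; it then rules out two big descents by a direct contradiction (two big descents would force an occurrence of $231$ built from $\pi_{j}$, $\pi_{i}-1$, $\pi_{j}-1$), counts $b_{n,0}=n$ by observing that the choice of $\pi_{1}$ determines the big-descent-free permutations, and obtains $b_{n,1}=\binom{n-1}{2}$ by subtracting from the Simion--Schmidt total $|\mathfrak{S}_{n}(123,231)|=\binom{n}{2}+1$. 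You instead decompose around the position of $n$, which yields a clean recursive dichotomy (either $\pi=n\sigma$ with $\sigma\in\mathfrak{S}_{n-1}(123,231)$, or $\pi$ is one of $n-1$ completely determined permutations with $\bdes=0$), and you prove everything --- the bound $\bdes\le1$, the count $b_{n,0}=n$, and the count $b_{n,1}=\binom{n-1}{2}$ --- by induction on $n$ using the transfer identity $\bdes(n\sigma)=\bdes(\sigma)+[\sigma\ne\varepsilon\text{ and }\sigma_{1}\ne n-1]$. Your argument is self-contained in that it does not invoke the known cardinality of $\mathfrak{S}_{n}(123,231)$ (indeed it reproves it via the recursion $a_{n}=a_{n-1}+(n-1)$), at the cost of somewhat more bookkeeping; the paper's argument is shorter but leans on the external enumeration. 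All the steps you outline check out, including the strengthened inductive hypothesis that $\bdes(\pi)=1$ forces $\pi_{1}=n$, which is what makes the recurrences for $b_{n,1}$ and $h_{m}$ close up.
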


\begin{proof}
First, we argue that no permutation in $\mathfrak{S}_{n}(123,231)$ has more than one big descent. Suppose otherwise; let $\pi\in\mathfrak{S}_{n}(123,231)$ have two big descents $i$ and $j$ with $i<j$. Since $\pi\in\mathfrak{S}_{n}(123,231)$, we may write $\pi=\sigma1\tau$ where $\sigma$ and $\tau$ are both decreasing. Note that neither $\pi_{i}$ nor $\pi_{j}$ can be in $\tau$. If one of them\textemdash say $\pi_{i}$\textemdash were in $\tau$, then $\pi_{i}-1$ must appear before $\pi_{i}$ in $\pi$ because $\tau$ is decreasing and $\pi_{i+1}<\pi_{i}-1$; then $\pi_{i}-1$, $\pi_{i}$, and $\pi_{i+1}$ would be an occurrence of 231. Thus, both $\pi_{i}$ and $\pi_{j}$ are in $\sigma$. Next, since $\sigma$ is decreasing and $i$ and $j$ are big descents, both $\pi_{i}-1$ and $\pi_{j}-1$ are in $\tau$. And since $i<j$, we have $\pi_{i}>\pi_{j}$, which implies $\pi_{i}-1>\pi_{j}-1$. This means that $\pi_{i}-1$ appears before $\pi_{j}-1$ in $\tau$, since $\tau$ is decreasing. It follows that $\pi_{j}$, $\pi_{i}-1$, and $\pi_{j}-1$ form an occurrence of 231 in $\pi$, a contradiction.

Now, we claim that $b_{n,0}(123,231)=n$. Suppose that $\pi\in\mathfrak{S}_{n}(123,231)$ has no big descents. Let us write $\pi=\sigma1\tau$ where $\sigma$ and $\tau$ are both decreasing, as above. Note that if $\pi_{1}=j$, then we must have 
\[
\sigma=j\:(j-1)\:\dots\:2\qquad\text{and}\qquad\tau=n\:(n-1)\:\dots\:(j+1)
\]
in order for $\pi$ to have no big descents. Therefore, the choice of $\pi_{1}$ completely determines $\pi$, and since there are $n$ choices for $\pi_{1}$, we have $b_{n,0}(123,231)=n$. Hence, the desired result follows from the formula 
\begin{equation}
\left|\mathfrak{S}_{n}(123,231)\right|={n \choose 2}+1\label{e-123-231ss},
\end{equation}
due to Simion and Schmidt 
\cite[Lemma 5(e) and Proposition 11]{Simion1985}, and the calculation
${n \choose 2}+1-n={n-1 \choose 2}$.
\end{proof}

\begin{prop} \label{p-132-321}
For all $n\geq2$, we have
\[
b_{n,k}(132,321)=\begin{cases}
2, & \text{if }k=0,\\
{n \choose 2}-1, & \text{if }k=1,\\
0, & \text{otherwise.}
\end{cases}
\]
\end{prop}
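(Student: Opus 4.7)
The plan is to establish a simple structural decomposition of $\mathfrak{S}_n(132,321)$ and then read off the big descent distribution by induction.

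First, I would prove the following characterization: for $n\geq 2$, every permutation $\pi\in\mathfrak{S}_n(132,321)$ falls into exactly one of two disjoint types: (a) $\pi=\sigma' n$ for some $\sigma'\in\mathfrak{S}_{n-1}(132,321)$, or (b) $\pi=(k+1)(k+2)\cdots n\,1\,2\cdots k$ for some $k\in[n-1]$. To prove this, apply the standard 132-avoiding decomposition $\pi=\sigma n\tau$ in which every letter of $\sigma$ exceeds every letter of $\tau$. If $\tau$ is empty, we land in (a). If $\tau$ is nonempty, then 321-avoidance forces $\sigma$ to be increasing (any descent of $\sigma$ together with any letter of $\tau$ would form a 321 pattern) and likewise forces $\tau$ to be increasing (via $n$ together with any descent of $\tau$). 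Since the letters of $\tau$ must then be the $k\coloneqq|\tau|$ smallest values, this is precisely case (b). A quick check confirms that every permutation of type (a) or (b) does avoid both patterns, and the two types are disjoint because permutations of type (b) end in $k<n$.

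Next, I would compute $\bdes$ in each case. In case (a), $\bdes(\sigma'n)=\bdes(\sigma')$, since appending the maximum $n$ creates an ascent at the final step and thus does not introduce any new descent. In case (b), the unique descent occurs at position $n-k$, between the values $n$ and $1$, which is a big descent precisely when $n\geq 3$. Consequently, the $n-1$ permutations of type (b) each contribute $1$ to $b_{n,1}(132,321)$ whenever $n\geq 3$.

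Combining these observations yields the recurrence, valid for $n\geq 3$,
\[
b_{n,k}(132,321)=b_{n-1,k}(132,321)+(n-1)\,[k=1],
\]
with base values $b_{2,0}=2$ and $b_{2,1}=0$ coming from $\mathfrak{S}_2(132,321)=\{12,21\}$. Telescoping gives $b_{n,0}=2$ and $b_{n,1}=2+3+\cdots+(n-1)=\binom{n}{2}-1$ for all $n\geq 2$, together with $b_{n,k}=0$ for $k\geq 2$, as desired. The main conceptual step is the structural characterization in the first stage; everything afterwards is routine bookkeeping.
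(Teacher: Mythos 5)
Your proof is correct. It takes a genuinely different route from the paper's: you decompose around the maximum letter, writing $\pi=\sigma n\tau$ and showing that the permutations with $\tau$ nonempty are exactly the $n-1$ permutations $(k+1)\cdots n\,1\,2\cdots k$, which yields the recurrence $b_{n,k}=b_{n-1,k}+(n-1)[k=1]$. The paper instead decomposes around the minimum letter, writing $\pi=\sigma 1\tau$ with $\sigma$ and $\tau$ both increasing; this shows directly that every such permutation has at most one descent, identifies the two permutations (the identity and $21\,3\cdots n$) with no big descent, and then obtains $b_{n,1}={n\choose 2}-1$ by subtracting from the known cardinality $\left|\mathfrak{S}_{n}(132,321)\right|={n\choose 2}+1$, which it imports from Simion--Schmidt via reversal symmetry. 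Your argument has the advantage of being self-contained\textemdash the total count ${n\choose 2}+1$ falls out of the telescoping rather than being assumed\textemdash at the cost of an induction; the paper's is shorter but relies on the external enumeration. Both are valid, and your structural characterization is a clean alternative.
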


\begin{proof}
Let $\pi\in\mathfrak{S}_{n}(132,321)$ where $n\geq2$. We can write $\pi=\sigma1\tau$ where $\sigma$ and $\tau$ are both increasing. Thus $\pi$ has no descents if $\sigma$ is empty, and has exactly one descent otherwise. In particular, $\pi$ has no big descents if and only if $\sigma$ is empty or $\sigma=2$. Hence, we have $b_{n,0}=2$, and the result follows from \eqref{e-123-231ss} and reversal symmetry.
\end{proof}

The cases $\Pi=\{231,312\}$ and $\Pi=\{123,321\}$ are trivial, but we state these results for the sake of completeness.

\begin{prop} \label{p-231-312}
For all $n\geq1$, we have
\[
b_{n,k}(231,312)=\begin{cases}
2^{n-1}, & \text{if }k=0,\\
0, & \text{otherwise.}
\end{cases}
\]
\end{prop}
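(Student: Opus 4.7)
The plan is to reduce the proposition to two independent facts: first, the cardinality $|\mathfrak{S}_n(231,312)|=2^{n-1}$, and second, the statement that every permutation avoiding both $231$ and $312$ has zero big descents. Once both are established, the formula follows immediately, since $b_{n,0}(231,312)=|\mathfrak{S}_n(231,312)|$ and $b_{n,k}(231,312)=0$ for $k\geq 1$.

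For the cardinality, I would invoke the result of Simion and Schmidt \cite{Simion1985}, already used elsewhere in this section, which gives $|\mathfrak{S}_n(231,312)|=2^{n-1}$. Alternatively, one can re-derive it quickly by noting that in any $\pi\in\mathfrak{S}_n(231,312)$ with $n\geq 2$, the element $1$ must appear at position $1$ or position $n$: if $1$ is at some interior position $k$ and $\pi_i,\pi_j$ lie on its two sides with $i<k<j$, then $\pi_i<\pi_j$ would produce a $231$ pattern $\pi_i\pi_j 1$, while $\pi_i>\pi_j$ would produce a $312$ pattern $\pi_i 1 \pi_j$. Peeling off the $1$ and standardizing gives the recursion $|\mathfrak{S}_n(231,312)|=2\,|\mathfrak{S}_{n-1}(231,312)|$, with base case $1$.

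For the big descent claim, I would argue directly: suppose $k\in[n-1]$ is a big descent of $\pi\in\mathfrak{S}_n(231,312)$, so that $\pi_k>\pi_{k+1}+1$. Then there exists a value $v$ with $\pi_{k+1}<v<\pi_k$, and $v=\pi_j$ for some position $j\notin\{k,k+1\}$. If $j<k$, then the triple $(\pi_j,\pi_k,\pi_{k+1})=(v,\pi_k,\pi_{k+1})$ has the relative order $2,3,1$, giving a $231$ occurrence; if $j>k+1$, then $(\pi_k,\pi_{k+1},\pi_j)$ has the order $3,1,2$, giving a $312$ occurrence. Either case contradicts $\pi\in\mathfrak{S}_n(231,312)$, so no big descent can exist and $\bdes(\pi)=0$.

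No step here is a real obstacle; the only thing to be careful about is correctly identifying the relative orders in the pattern analysis. The argument is short enough that I would present it as a single paragraph proof, with a pointer to \cite{Simion1985} for the cardinality.
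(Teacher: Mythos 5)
Your main line of argument is correct and is essentially the paper's proof: the paper likewise asserts that permutations avoiding both $231$ and $312$ have no big descents and then cites Simion--Schmidt for $\left|\mathfrak{S}_{n}(231,312)\right|=2^{n-1}$. Your direct pattern argument for the no-big-descent claim (a value $v$ with $\pi_{k+1}<v<\pi_{k}$ sits either to the left, creating a $231$, or to the right, creating a $312$) is sound and usefully makes explicit what the paper leaves as a one-line assertion.

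However, the ``alternative'' rederivation of the cardinality is wrong and should be cut or repaired. The claim that $1$ must occupy position $1$ or position $n$ in any $\pi\in\mathfrak{S}_{n}(231,312)$ is false: $213\in\mathfrak{S}_{3}(231,312)$ has $1$ in the interior. The error is in the pattern identification: with positions $i<k<j$ and $\pi_{k}=1$, the positional subsequence is $\pi_{i}\,1\,\pi_{j}$, not $\pi_{i}\,\pi_{j}\,1$; when $\pi_{i}<\pi_{j}$ this standardizes to $213$, which is not forbidden, so no contradiction arises in that case. (The correct structural picture is that $\mathfrak{S}_{n}(231,312)$ consists of direct sums of decreasing blocks of consecutive integers, indexed by compositions of $n$, which also explains both the count $2^{n-1}$ and the absence of big descents.) Since your primary route just cites \cite{Simion1985} for the cardinality, the proof as a whole stands; only the optional aside needs to go.
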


\begin{proof}
Since permutations simultaneously avoiding 231 and 312 cannot have big descents, the result follows from the formula
\[
\left|\mathfrak{S}_{n}(231,312)\right|=2^{n-1}
\]
due to Simion and Schmidt
\cite[Lemma 5(b) and Proposition 8]{Simion1985}.
\end{proof}

\begin{prop} \label{p-123-321}
We have
\begin{alignat*}{1}
B(t,x;123,321) & =1+x+2x^{2}+(2+2t)x^{3}+(1+2t+t^{2})x^{4}.
\end{alignat*}
\end{prop}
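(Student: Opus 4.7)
The proof plan is to combine a short application of the Erdős--Szekeres theorem with a direct enumeration of the small cases. Since the generating function is a polynomial of degree $4$ in $x$, the key structural observation is that $\mathfrak{S}_n(123,321)=\emptyset$ for all $n\geq 5$: the Erdős--Szekeres theorem guarantees that every permutation of length at least $(3-1)(3-1)+1=5$ contains a monotone subsequence of length $3$, hence contains either a $123$-pattern or a $321$-pattern. This immediately shows that $[x^n]B(t,x;123,321)=0$ for $n\geq 5$, reducing the problem to the five cases $n\in\{0,1,2,3,4\}$.

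The cases $n\leq 2$ are trivial: $\mathfrak{S}_0(123,321)=\{\varepsilon\}$, $\mathfrak{S}_1(123,321)=\{1\}$, and $\mathfrak{S}_2(123,321)=\{12,21\}$, contributing $1$, $x$, and $2x^2$. For $n=3$, one removes the two forbidden patterns from $\mathfrak{S}_3$ to obtain $\{132,213,231,312\}$, and reads off $\bdes(132)=\bdes(213)=0$ and $\bdes(231)=\bdes(312)=1$, yielding the coefficient $2+2t$. For $n=4$, a direct check (or a brief structural argument, as below) leaves exactly the four permutations $\{2143,\,2413,\,3142,\,3412\}$ with respective big-descent counts $0,1,2,1$, giving the coefficient $1+2t+t^2$.

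To organize the $n=4$ enumeration without listing all $24$ permutations, I would observe that a permutation $\pi\in\mathfrak{S}_4$ avoiding $321$ must have its entries split into at most two increasing subsequences, and avoiding $123$ must split into at most two decreasing subsequences; combining these constraints forces the one-line notation to have exactly two ascents and one descent, or one ascent split by two descents of a specific form. Sifting through the possibilities confirms that the surviving permutations are precisely $2143,2413,3142,3412$. The big-descent statistic is then computed directly on each.

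The only ``obstacle'' is bookkeeping: making sure the small-$n$ calculations are complete and that the Erdős--Szekeres bound is applied with the correct constant. Assembling the five contributions gives
\[
B(t,x;123,321) = 1+x+2x^2+(2+2t)x^3+(1+2t+t^2)x^4,
\]
as claimed.
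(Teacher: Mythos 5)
Your proof is correct and takes essentially the same approach as the paper: invoke the Erd\H{o}s--Szekeres theorem to get $\mathfrak{S}_{n}(123,321)=\emptyset$ for $n\geq5$, then enumerate the cases $n\leq4$ directly. The four surviving permutations for $n=4$ and their big-descent counts check out, so the only difference is that you spell out the small-case bookkeeping that the paper leaves implicit.
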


\begin{proof}
We know from the celebrated Erd\H{o}s\textendash Szekeres theorem \cite{Erdoes1935} that $\mathfrak{S}_{n}(123,321)=\emptyset$ for all $n\geq5$, and the distribution of $\bdes$ over $\mathfrak{S}_{n}(123,321)$ for $n\le4$ immediately gives the above equation.
\end{proof}

\section{\label{s-future}Future directions of research}

We conclude this paper by proposing some directions for further study.

\subsection{The pattern 123, other \texorpdfstring{$\Pi\subseteq\mathfrak{S}_{3}$}{length 3 pattern sets}, and \texorpdfstring{$r$}{r}-descents}

As part of our investigation on 123-avoiding permutations, we found that the number of permutations in $\mathfrak{S}_{n}(123)$ with $k$ big right descents is the Narayana number $N_{n,k}$ (Theorem~\ref{t-123-nar}), but our proof of this fact is complicated and sheds little combinatorial insight into why the Narayana numbers appear. To that end, we ask for direct bijective proofs for this result and the analogous result for (ordinary) big descents (Theorem~\ref{t-123}). 

\begin{problem} \label{p-bij}
Find bijective proofs for Theorems \ref{t-123}--\ref{t-123-nar}.
\end{problem}

Among other things, the Narayana numbers are known to count Dyck paths by peaks and 123-avoiding permutations by right-to-left maxima; these may provide potential avenues for Problem~\ref{p-bij}.

In this paper we have only considered the case where the pattern set $\Pi\subseteq\mathfrak{S}_{3}$ is of size 1 or 2, but one also could ask the following.

\begin{problem}
Describe the distribution of $\bdes$ over $\mathfrak{S}_{n}(\Pi)$ for pattern sets $\Pi\subseteq\mathfrak{S}_{3}$ with $\left|\Pi\right|\geq3$.
\end{problem}

Finally, recall that big descents are only a special case ($r=1$) of the more general notion of $r$-descents.

\begin{problem}
Describe the distribution of $\des_{r}$ over families of pattern-avoiding permutations.
\end{problem}

\subsection{Real-rootedness and log-concavity}

Given a set $\Pi$ of patterns, it is natural to ask whether the sequence $\{b_{n,k}(\Pi)\}_{0\leq k\leq n}$ is unimodal for all $n\geq0$, meaning that there exists $j\in[n]$ for which
\[
b_{n,0}(\Pi)\leq b_{n,1}(\Pi)\leq\cdots\leq b_{n,j}(\Pi)\geq\cdots\geq b_{n,n-1}(\Pi)\geq b_{n,n}(\Pi).
\]
Unimodality is closely related to the properties of real-rootedness and log-concavity. A polynomial with real coefficients is called \textit{real-rooted} if all of its roots are real, and a sequence $\{a_{k}\}_{0\leq k\leq n}$ is said to be \textit{log-concave} if $a_{k}^{2}\geq a_{k-1}a_{k+1}$ for all $1\leq k\leq n-1$. It is well known that if a polynomial with non-negative coefficients is real-rooted, then the sequence of its coefficients is log-concave, which in turn implies unimodality of the coefficient sequence \cite[Lemma 1.1]{Braenden2015}. 

Empirical evidence suggests that the polynomials $B_{n}(t;\Pi)$ are real-rooted for almost every $\Pi$ considered in this paper.

\begin{conjecture} \label{cj-realroots}
Let $\Pi\subseteq\mathfrak{S}_{3}$ with $\left|\Pi\right|=1$ or $\left|\Pi\right|=2$, and suppose that $\Pi\notin\left[\{123,132\}\right]_{\bdes}$. Then $B_{n}(t;\Pi)$ is real-rooted for all $n\geq0$.
\end{conjecture}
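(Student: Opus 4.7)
The plan is to verify Conjecture~\ref{cj-realroots} class by class, using the formulas collected in Table~\ref{tb-enum}. Since both $\bdes$ and real-rootedness are invariants of the $\bdes$-Wilf equivalence class, it suffices to treat one representative per class. The classes $[231,312]_{\bdes}$, $[123,321]_{\bdes}$, $[123,231]_{\bdes}$, and $[132,321]_{\bdes}$ are disposed of by inspection: for $\{231,312\}$ the polynomial $B_{n}(t;\Pi)$ is constant by Proposition~\ref{p-231-312}; for $\{123,321\}$ it is nonzero only for $n\le4$ and the factorization $(1+t)^{2}$ handles the largest case (Proposition~\ref{p-123-321}); and for $\{123,231\}$ and $\{132,321\}$ Propositions~\ref{p-123-231} and~\ref{p-132-321} show $\deg_{t}B_{n}(t;\Pi)\le 1$.

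For $\Pi=\{213,231\}$, Theorem~\ref{t-213-231} yields the closed form
\[
B_{n}(t;\Pi)=\frac{(1+\sqrt{t})^{n}-(1-\sqrt{t})^{n}}{2\sqrt{t}}.
\]
Setting $\sqrt{t}=i\tan\theta$ converts the numerator into $2i\sin(n\theta)/\cos^{n}\theta$, whose zeros yield the $\lfloor(n-1)/2\rfloor$ distinct real roots $t=-\tan^{2}(j\pi/n)$, matching the degree of $B_{n}$. For $\Pi=\{231\}$, Theorem~\ref{t-231joint} identifies $B_{n}(t;231)$ with the peak polynomial over $\mathfrak{S}_{n}(231)$; the plan is to derive a three-term recurrence in $n$ from the standard first-return decomposition of a 231-avoider at its maximum entry and then invoke the classical interlacing induction (cf.\ \cite[Lemma~1.1 and the surrounding discussion]{Braenden2015}).

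For $\Pi=\{231,321\}$, the rational GF of Theorem~\ref{t-231-321} is equivalent to the gap recurrence $B_{n}=2B_{n-1}-(1-t)B_{n-3}$ for $n\ge3$, with $B_{0}=B_{1}=1$ and $B_{2}=2$; here we would run an interlacing induction adapted to the gap, tracking the triple $(B_{n-2},B_{n-1},B_{n})$ so that mutual interlacing propagates through the recurrence. For $\Pi\in\{\{132\},\{321\}\}$, the algebraic generating functions of Theorems~\ref{t-132gf}--\ref{t-321gf} can be cleared of radicals to produce polynomial identities $P(B,x,t)=0$ quadratic in $B$; extracting the coefficient of $x^{n}$ yields polynomial recurrences for $B_{n}(t;\Pi)$ which, after linearization by tracking the auxiliary sequence $\bar B(t,x;132)$ of Lemma~\ref{l-132funeq} (and its analog for $\{321\}$), become amenable to the same interlacing machinery.

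The main obstacle is $\Pi=\{123\}$. The ratio $b_{n,k+1}(123)/b_{n,k}(123)=(n-k-1)(n-k-2)/((k+1)(k+3))$ identifies $B_{n}(t;123)$, up to a positive prefactor and the affine change of variables $x=1-2t$, with a Jacobi polynomial of type $P_{n-2}^{(2,-2n)}(x)$. Since $\beta=-2n$ falls well outside the classical range $\beta>-1$ that guarantees real-rootedness, the Jacobi theorem does not apply directly. The plan is to establish a three-term recurrence
\[
B_{n}(t;123)=\alpha_{n}(t)\,B_{n-1}(t;123)+\beta_{n}(t)\,B_{n-2}(t;123),
\]
either via the contiguous relations for the underlying ${}_{2}F_{1}$ or via the algebraic equation satisfied by the GF for $\des$ over $\mathfrak{S}_{n}(123)$ in \eqref{e-123des} (translated to $\bdes$ through the combinatorial analysis of Section~\ref{ss-123}), and then to run the interlacing induction. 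The technical heart of the argument will be verifying the sign hypotheses on $\alpha_{n}(t)$ and $\beta_{n}(t)$ needed to propagate interlacing; this is delicate precisely because the naive Jacobi identification sits in a degenerate parameter regime where the usual orthogonality-based arguments break down.
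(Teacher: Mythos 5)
First, note that the statement you are proving is labeled a \emph{conjecture} in the paper: the authors do not prove it in general. They establish it only for the trivially real-rooted classes of Section~\ref{ss-others} and for $[231]_{\bdes}$, the latter via Br\"and\'en's identity $A_{n}(t;231)=\left(\frac{1+t}{2}\right)^{n-1}P_{n}\left(4t/(1+t)^{2};231\right)$ combined with Stembridge's criterion and the equidistribution of Theorem~\ref{t-231joint}; the case $\{123\}$ is only attributed to an OEIS comment. So there is no full proof in the paper to compare against, and your submission should be judged as a proof attempt of an open statement.

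As such, your proposal is a plan rather than a proof, and it is complete only in places. The genuinely complete parts are the four classes you dispose of by inspection and the class $[213,231]_{\bdes}$: the identity $B_{n}(t;213,231)=\bigl((1+\sqrt{t})^{n}-(1-\sqrt{t})^{n}\bigr)/(2\sqrt{t})$ is correct, and the substitution $t=-\tan^{2}\theta$ does produce $\lfloor(n-1)/2\rfloor$ distinct negative roots $t=-\tan^{2}(j\pi/n)$, matching $\deg_{t}B_{n}$; this is a clean argument the paper does not contain. Everywhere else there are gaps. For $\{231\}$ you defer to an unverified ``three-term recurrence plus interlacing'' scheme and miss the route that actually works (Br\"and\'en plus Stembridge, as in the paper). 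For $\{132\}$ and $\{321\}$ the proposed step fails as described: the functional equations are \emph{quadratic} in $B$, so extracting the coefficient of $x^{n}$ yields nonlinear convolution recurrences of the form $B_{n}=\sum_{j}c_{j}(t)B_{j}B_{n-j}+\cdots$, not a linear three-term recurrence, and the classical interlacing induction does not apply to such recurrences in any standard way; ``linearization by tracking $\bar{B}$'' does not change this, since the system \eqref{e-132feq1}--\eqref{e-132feq2} is still quadratic. For $\{231,321\}$, interlacing through the gap recurrence $B_{n}=2B_{n-1}-(1-t)B_{n-3}$ is not a standard argument (the coefficient $1-t$ changes sign at $t=1$, and mutual interlacing of a triple does not automatically propagate through a lag-three recurrence); you would need to actually carry this out. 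For $\{123\}$ you candidly acknowledge that the Jacobi identification sits outside the classical parameter range and that the sign verification is unresolved. In short: you have a correct new proof for one nontrivial class and the trivial classes, but the conjecture remains unproven for $\{132\}$, $\{321\}$, $\{123\}$, and $\{231,321\}$, and your $\{231\}$ argument is incomplete where the paper's is complete.
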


The polynomials $B_{n}(t;\Pi)$ for the pattern sets $\Pi$ considered in Section \ref{ss-others} are real-rooted for trivial reasons. Additionally, the real-rootednesss of $B_{n}(t;123)$ appears to be known, according to Peter Bala's comment in \cite[A108838]{oeis}. Below we will prove the real-rootedness of $B_{n}(t;231)$.

\begin{thm}
The polynomial $B_{n}(t;231)$ is real-rooted for all $n\geq0$.
\end{thm}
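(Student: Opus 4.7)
The plan is to reduce the statement to the real-rootedness of the Motzkin polynomials. Using Corollary \ref{c-231}, for $n \geq 1$ we have
\[
B_n(t;231) \;=\; 2^{n-1}\, M_{n-1}(t/4), \qquad \text{where}\quad M_m(u) \coloneqq \sum_{k \geq 0} C_k \binom{m}{2k} u^k
\]
and $C_k$ denotes the $k$-th Catalan number (the case $n=0$ is trivial). Since $t = 4u$ is a positive affine change of variable, real-rootedness of $B_n(t;231)$ in $t$ is equivalent to real-rootedness of $M_{n-1}(u)$ in $u$.

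Next, I would establish the three-term recurrence
\[
(m+2)\,M_m(u) \;=\; (2m+1)\,M_{m-1}(u) + (m-1)(4u-1)\,M_{m-2}(u) \qquad (m\geq 2),
\]
with $M_0(u)=M_1(u)=1$. The generating function $A(z,u) \coloneqq \sum_{m\geq 0} M_m(u)\,z^m$ admits a closed form whose algebraic equation $uz^2 A^2 - (1-z)A + 1 = 0$ can be verified directly from the Motzkin-path interpretation; differentiating this equation with respect to $z$ and extracting the coefficient of $z^{m-1}$ produces the recurrence above. (An alternative route is to apply Zeilberger's creative telescoping to the explicit summand.) A quick sanity check at $u=0$ confirms the recurrence: $M_m(0)=1$ for all $m$, and $(m+2)\cdot 1 = (2m+1)\cdot 1 - (m-1)\cdot 1$.

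I would then prove real-rootedness of $M_m(u)$ by induction on $m$, simultaneously maintaining the hypothesis that $M_{m-1}$ and $M_m$ have only real, negative roots which interlace in the appropriate sense. Base cases $M_0=M_1=1$, $M_2 = 1+u$, and $M_3 = 1+3u$ are direct. For the inductive step, evaluating the recurrence at a root $\beta_j$ of $M_{m-1}$ gives
\[
(m+2)\,M_m(\beta_j) \;=\; (m-1)(4\beta_j - 1)\,M_{m-2}(\beta_j).
\]
The crucial observation is that every $\beta_j$ is negative (because $M_{m-1}$ has positive coefficients), so $4\beta_j - 1 < 0$ throughout; combined with the interlacing hypothesis, which forces the signs of $M_{m-2}(\beta_j)$ to strictly alternate in $j$, the values $M_m(\beta_j)$ also alternate in sign. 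The Intermediate Value Theorem then produces a root of $M_m$ strictly between each consecutive pair of $\beta_j$'s. Sign analysis of $M_m(0) = 1 > 0$, together with the leading behavior of $M_m$ as $u \to -\infty$ (determined by the positivity of the top coefficient), accounts for any remaining root and preserves the interlacing needed for the next step.

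The main obstacle lies in the parity-dependent degree bookkeeping: since $\deg M_{2j} = \deg M_{2j+1} = j$, the inductive step behaves differently when $m$ is even (one new root must appear) versus when $m$ is odd (no new root). In both cases, the sign analysis at $0$ and at $\pm\infty$ must be checked consistently, and the strict alternation of $M_{m-2}$ at the roots of $M_{m-1}$ must be recovered after each step so the induction can continue. Should the direct inductive argument prove delicate, an alternative is to invoke one of the standard theorems on preservers of real-rootedness under recurrences of the form $f_m = \alpha(u)\,f_{m-1} + \beta(u)\,f_{m-2}$ (e.g.\ the Liu\textendash Wang preservation criterion), applied to the recurrence above; the sign-changing factor $4u-1$ causes no problem there, because on the relevant negative range of $u$ it has a constant sign.
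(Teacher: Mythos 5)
Your proposal is correct in outline but takes a genuinely different route from the paper. The paper's proof is short and leans on three external ingredients: the equidistribution of $(\bdes,\des)$ and $(\pk,\des)$ over $\mathfrak{S}_{n}(231)$ (Theorem \ref{t-231joint}), the known real-rootedness of the Narayana polynomials, and Br\"and\'en's identity $A_{n}(t;231)=\left(\frac{1+t}{2}\right)^{n-1}P_{n}\left(\frac{4t}{(1+t)^{2}};231\right)$ combined with Stembridge's criterion that $f(t)$ is real-rooted if and only if $f(4t/(1+t)^{2})$ is. Your argument instead starts from the explicit formula of Corollary \ref{c-231}, rescales to the ``Motzkin-like'' polynomials $M_{m}(u)=\sum_{k}C_{k}\binom{m}{2k}u^{k}$ (your identity $B_{n}(t;231)=2^{n-1}M_{n-1}(t/4)$ checks out), and runs a self-contained interlacing induction on a three-term recurrence. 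The two proofs are really two sides of the same coin: your $M_{n-1}$ is the gamma-vector polynomial whose real-rootedness Stembridge's theorem transfers to the Narayana polynomial, so you are proving directly what the paper imports. What your approach buys is independence from Theorem \ref{t-231joint} and from the literature on peak/descent polynomials; what it costs is the case analysis you acknowledge.

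Two points to tighten. First, your recurrence $(m+2)M_{m}=(2m+1)M_{m-1}+(m-1)(4u-1)M_{m-2}$ is correct (I verified it for small $m$ and it specializes at $u=1$ to the classical Motzkin recurrence), but it does not follow from ``differentiating the algebraic equation and extracting coefficients'' alone: the $A^{2}$ term produces convolutions, so you must first convert $uz^{2}A^{2}-(1-z)A+1=0$ into a linear ODE in $z$ with polynomial coefficients (e.g.\ via the first-order ODE satisfied by $\sqrt{(1-z)^{2}-4uz^{2}}$) and then read off the P-recurrence; alternatively, creative telescoping on the explicit summand works as you say. Second, the deferred parity bookkeeping does close up, and it is worth recording why: writing the roots of $M_{m-1}$ as $\beta_{1}<\cdots<\beta_{d}<0$, the sign of $M_{m}(\beta_{i})$ is $(-1)$ times the alternating sign of $M_{m-2}(\beta_{i})$, and one checks that $M_{m}(\beta_{d})<0$ in both parities, so $M_{m}(0)=1>0$ always yields one root in $(\beta_{d},0)$; when $m$ is even the sign of $M_{m}(\beta_{1})$ is opposite to the sign $(-1)^{\deg M_{m}}$ of $M_{m}$ at $-\infty$, producing the second new root to the left of $\beta_{1}$, while when $m$ is odd these signs agree and no extra root is needed. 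This yields exactly $\deg M_{m}$ real roots and restores the interlacing configuration required for the next step, so the induction is sound as you set it up.
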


\begin{proof}
The result is trivial for $n=0$, so assume $n\geq1$. Let 
\[
A_{n}(t;231)\coloneqq\sum_{\pi\in\mathfrak{S}_{n}(231)}t^{\des(\pi)}\quad\text{and}\quad P_{n}(t;231)\coloneqq\sum_{\pi\in\mathfrak{S}_{n}(231)}t^{\pk(\pi)}
\]
where $\pk$ is the peak number statistic as defined in Section \ref{ss-231}. The polynomials $A_{n}(t;231)$ are the Narayana polynomials, which are known to be real-rooted; see, e.g., \cite{Liu2007}. It follows from the work of Br\"and\'en \cite[Corollaries 3.2 and 4.2]{Braenden2008} that 
\begin{equation}
A_{n}(t;231)=\left(\frac{1+t}{2}\right)^{n-1}P_{n}\left(\frac{4t}{(1+t)^{2}};231\right)\label{e-branden}
\end{equation}
for all $n\geq1$. Furthermore, Stembridge \cite[Proposition 4.4]{Stembridge1997} proved that if $f(t)$ is a polynomial with non-negative real coefficients, then $f$ is real-rooted if and only if $f(4t/(1+t)^{2})$ is real-rooted; applying this result to \eqref{e-branden} proves that $P_{n}(t;231)$ is real-rooted for every $n\geq1$. But this is precisely the result we desire, by the equidistribution of $\pk$ and $\bdes$ over $\mathfrak{S}_{n}(231)$ (Theorem \ref{t-231joint}).
\end{proof}

Even though the polynomials $B_{n}(t;123,132)$ are not real-rooted, their coefficients seem to be log-concave.

\begin{conjecture}
Let $\Pi\subseteq\mathfrak{S}_{3}$ with $\left|\Pi\right|=1$ or $\left|\Pi\right|=2$. The sequence $\{b_{n,k}(\Pi)\}_{0\leq k\leq n}$ is log-concave for all $n\geq0$.
\end{conjecture}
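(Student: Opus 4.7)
The plan is to partition the pattern sets into the $\bdes$-Wilf equivalence classes of Theorems~\ref{t-bdesW1}--\ref{t-bdesW2} and verify log-concavity within each class using the formulas assembled in Table~\ref{tb-enum}. For every class other than $[\{123,132\}]_{\bdes}$, the strategy is to establish real-rootedness of $B_n(t;\Pi)$ and then invoke the standard fact that a polynomial with non-negative coefficients whose roots are all real has a log-concave coefficient sequence. For the exceptional class $[\{123,132\}]_{\bdes}$, whose polynomials are empirically not real-rooted, the plan is to establish log-concavity directly from the recurrence hidden in the denominator in Theorem~\ref{t-123-132}.

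The easy classes can be dispatched quickly. The four classes $[\{231,312\}]_{\bdes}$, $[\{123,321\}]_{\bdes}$, $[\{123,231\}]_{\bdes}$, and $[\{132,321\}]_{\bdes}$ have polynomials of degree at most one (or nonzero for only finitely many $n$), so log-concavity is immediate from Propositions~\ref{p-123-231}--\ref{p-123-321}. For $[\{213,231\}]_{\bdes}$, the formula $b_{n,k}=\binom{n}{2k+1}$ reduces log-concavity to
\begin{equation*}
\binom{n}{2k+1}^{2}\geq\binom{n}{2k-1}\binom{n}{2k+3},
\end{equation*}
which follows by comparing consecutive ratios in Pascal's triangle. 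The class $[\{231\}]_{\bdes}$ is already covered by the real-rootedness theorem proved earlier in Section~\ref{s-future}.

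The bulk of the work lies in $[\{132\}]_{\bdes}$, $[\{321\}]_{\bdes}$, $[\{123\}]_{\bdes}$, and $[\{231,321\}]_{\bdes}$, for which the plan is to verify Conjecture~\ref{cj-realroots} directly. For $[\{123\}]_{\bdes}$, the explicit product-of-binomials formula $b_{n,k}(123)=\tfrac{2}{n+1}\binom{n+1}{k+2}\binom{n-2}{k}$ either allows one to check the log-concavity inequality directly (by forming ratios of consecutive terms and showing they are decreasing in $k$) or identifies $B_n(t;123)$ with an evaluation of a Jacobi polynomial, yielding real-rootedness. For $[\{231,321\}]_{\bdes}$, the denominator $1-2x+(1-t)x^{3}$ produces the three-term recurrence $B_n=2B_{n-1}-(1-t)B_{n-3}$, to which one can apply standard interlacing criteria for preserving real-rootedness under linear recurrences (e.g., those of Br\"and\'en or Liu--Wang). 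For $[\{132\}]_{\bdes}$ and $[\{321\}]_{\bdes}$, whose generating functions are quadratic algebraic, a similar recurrence-based interlacing argument on the sequence $\{B_n(t;\Pi)\}$ should suffice.

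For the exceptional class $[\{123,132\}]_{\bdes}$, the denominator of $B(t,x;123,132)$ yields, for $n\geq4$, the recurrence
\begin{equation*}
B_n(t)=2B_{n-1}(t)-(1-t)B_{n-2}(t)-t(1-t)B_{n-3}(t),
\end{equation*}
and hence the coefficient recurrence $b_{n,k}=2b_{n-1,k}-b_{n-2,k}+b_{n-2,k-1}-b_{n-3,k-1}+b_{n-3,k-2}$. The approach is a strengthened induction that propagates log-concavity in $k$ together with an auxiliary cross-inequality of the form $b_{n,k}b_{n-1,k+1}\geq b_{n,k+1}b_{n-1,k}$, the sort of companion positivity typically needed to push log-concavity through a three-term recurrence with signed coefficients. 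The main obstacle will be controlling the negative contributions $-b_{n-2,k}$ and $-b_{n-3,k-1}$ --- precisely the terms responsible for the failure of real-rootedness in this class; here the binary-word model from Section~\ref{ss-123-132}, which encodes $\bdes$ as $\occ_{10}+\occ_{011}$ on $\mathcal{W}_n^{(1)}$, may supply the additional combinatorial positivity needed to close the induction.
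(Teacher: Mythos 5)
This statement is labeled as a conjecture in the paper, and the paper gives no proof of it: the authors only establish real-rootedness (hence log-concavity) for the single class $[231]_{\bdes}$, note that real-rootedness of $B_{n}(t;123)$ ``appears to be known,'' and otherwise report empirical evidence. So there is no proof in the paper to compare against, and the question is whether your proposal actually closes the conjecture. It does not: it is a plan whose hard cases are left as intentions rather than arguments. The pieces that genuinely work are the degree-$\le 1$ classes, the class $[213,231]_{\bdes}$ (log-concavity of $\binom{n}{2k+1}$ in $k$ follows since an arithmetic-progression subsequence of a log-concave sequence is log-concave), the class $[231]_{\bdes}$ (via the theorem proved in Section~\ref{s-future}), and the class $[123]_{\bdes}$ if you take the direct route: $\binom{n+1}{k+2}$ and $\binom{n-2}{k}$ are each log-concave in $k$, and a termwise product of log-concave sequences is log-concave, so $b_{n,k}(123)$ is log-concave without any appeal to Jacobi polynomials.

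The remaining classes are where the conjecture actually lives, and your proposal does not supply proofs there. For $[231,321]_{\bdes}$ the recurrence $B_{n}=2B_{n-1}-(1-t)B_{n-3}$ has a gap of three indices; the standard interlacing-preservation results for recurrences $B_{n}=a(t)B_{n-1}+b(t)B_{n-2}$ do not apply off the shelf, and you would need to exhibit and verify a specific compatible-roots or interlacing structure across three consecutive indices, which you have not done. For $[132]_{\bdes}$ and $[321]_{\bdes}$ the generating functions are algebraic of degree two, so the natural recurrence for $B_{n}(t;\Pi)$ is a convolution (as in the functional equations of Lemma~\ref{l-132funeq}), not a finite linear recurrence with coefficients in $t$; ``a similar recurrence-based interlacing argument should suffice'' is not an argument, and proving real-rootedness here is exactly the content of the open Conjecture~\ref{cj-realroots}. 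Finally, for the exceptional class $[123,132]_{\bdes}$ you explicitly concede the gap: the induction you describe requires an auxiliary cross-inequality and control of the negative terms $-b_{n-2,k}$ and $-b_{n-3,k-1}$, and you say the binary-word model ``may supply'' the needed positivity. Until those inequalities are stated and proved, the central and most delicate case of the conjecture remains open. As written, the proposal is a reasonable research outline but not a proof.
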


\subsection{Big descent sets and quasisymmetric functions}

This subsection assumes basic familiarity with symmetric and quasisymmetric functions. We refer the reader to \cite[Chapters 7--8]{Sagan2020b} and \cite[Chapter 7]{Stanley2024} for definitions.

In addition to counting pattern-avoiding permutations by the \textit{number} of big descents, one can more generally consider the distribution of the \textit{set} of big descents. Given $\pi\in\mathfrak{S}_{n}$, let $\Bdes(\pi)$ denote the set of big descents of $\pi$. Since $\Bdes(\pi)\subseteq[n-1]$ for each $\pi\in\mathfrak{S}_{n}$, we can encode the distribution of $\Bdes$ over a set of permutations with a quasisymmetric function. To that end, define 
\[
Q_{n}^{(1)}(\Pi)\coloneqq\sum_{\pi\in\mathfrak{S}_{n}(\Pi)}F_{n,\Bdes(\pi)}
\]
where $F_{n,S}$ is the \textit{fundamental quasisymmetric function} associated to $S\subseteq[n-1]$. Then $\Pi$ and $\Pi^{\prime}$ are $\Bdes$-Wilf equivalent if and only if $Q_{n}^{(1)}(\Pi)=Q_{n}^{(1)}(\Pi^{\prime})$ for all $n\geq0$. 

It is natural to ask whether such quasisymmetric functions are in fact symmetric, and if so, whether they are \textit{Schur positive}, meaning that their expansion in terms of Schur functions has non-negative coefficients. Hamaker, Pawlowski, and Sagan \cite{Hamaker2020} investigated these questions for the quasisymmetric functions 
\[
Q_{n}(\Pi)\coloneqq\sum_{\pi\in\mathfrak{S}_{n}(\Pi)}F_{n,\Des(\pi)}
\]
where $\Des(\pi)$ is the descent set of $\pi$, and they proved a collection of results giving cases when $Q_{n}(\Pi)$ is symmetric and Schur positive. We pose the analogous questions for the quasisymmetric functions $Q_{n}^{(1)}(\Pi)$.

\begin{question} \label{q-quasisym}
For which pattern sets $\Pi$ is $Q_{n}^{(1)}(\Pi)$ symmetric for all $n\geq0$? For which $\Pi$ is $Q_{n}^{(1)}(\Pi)$ Schur positive for all $n\geq0$?
\end{question}

Taking $\Pi = \emptyset$, we have $Q_{n}^{(1)}(\emptyset) = \sum_{\pi\in\mathfrak{S}_{n}} F_{n,\Bdes(\pi)}$, for which symmetry and Schur positivity are already known.

\begin{thm} \label{t-bdessp}
For all $n\geq0$, the quasisymmetric function $Q_{n}^{(1)}(\emptyset)$ is symmetric and Schur positive.
\end{thm}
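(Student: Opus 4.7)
The plan is to identify $Q_n^{(1)}(\emptyset)$, up to the standard involution $\omega$ on symmetric functions, with Stanley's chromatic symmetric function $X_{P_n}(\mathbf{x})$ of the path graph $P_n$ on vertex set $[n]$ with edges $\{i,i+1\}$ for $i\in[n-1]$. Once the identity
\[
\omega Q_n^{(1)}(\emptyset) = X_{P_n}(\mathbf{x})
\]
is established, symmetry and Schur positivity of $Q_n^{(1)}(\emptyset)$ will follow from Gasharov's theorem, which asserts Schur positivity of $X_G(\mathbf{x})$ whenever $G$ is the incomparability graph of a $(3+1)$-free poset. The path $P_n$ is the incomparability graph of the natural unit interval order with intervals $[i,i+1]$ for $i\in[n]$, so Gasharov's hypothesis applies. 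Both symmetry and Schur positivity are preserved under $\omega$, since $\omega s_\lambda = s_{\lambda'}$.

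To prove the identity, I would begin by applying the standard formula $\omega F_{n,S} = F_{n,[n-1]\setminus S}$ on the fundamental quasisymmetric basis, which gives
\[
\omega Q_n^{(1)}(\emptyset) = \sum_{\pi \in \mathfrak{S}_n} F_{n, [n-1]\setminus \Bdes(\pi)}.
\]
By the $P$-partition interpretation of $F_{n,S}$, the right-hand side enumerates, weighted by $\mathbf{x}^g$, pairs $(\pi, g)$ with $\pi\in\mathfrak{S}_n$ and $g\colon[n]\to\mathbb{Z}^+$ such that $g(\pi_1)\leq g(\pi_2)\leq\cdots\leq g(\pi_n)$, with strict inequality at each position $k$ that is an ascent or small descent of $\pi$. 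The key step is then to exhibit a weight-preserving bijection between such pairs $(\pi, g)$ and proper colorings $\kappa\colon[n]\to\mathbb{Z}^+$ of $P_n$.

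For the bijection, given a compatible pair $(\pi,g)$, I would set $\kappa=g$. Within any level set of $g\circ\pi$, the permutation $\pi$ must arrange the elements so that every internal position is a big descent, forcing a strictly decreasing sequence with consecutive differences at least $2$; hence each level set of $g$ contains no two consecutive integers, meaning that $g$ is a proper coloring of $P_n$. Conversely, given a proper coloring $\kappa$, I would recover $\pi$ uniquely by sorting the vertices in increasing order of color, and arranging each color class (which is necessarily an independent set of $P_n$) in decreasing order, so that the required strict inequalities at non-big-descents and weak equalities at big descents hold automatically. The main technical obstacle I anticipate is verifying carefully that this forced decreasing-by-gaps-$\geq 2$ arrangement within a color class exactly realizes the compatibility condition on $(\pi,g)$, so that the bijection is well-defined and invertible on both sides.
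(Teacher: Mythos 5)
Your proposal is correct, but it takes a genuinely different route from the paper. The paper's proof is essentially a citation: it invokes Shareshian and Wachs to write $Q_{n}^{(1)}(\emptyset)=\sum_{j\geq0}Q_{n,j}$ as a sum of Eulerian quasisymmetric functions and then appeals to their Schur positivity. You instead prove directly, via an explicit weight-preserving bijection, that the image of $Q_{n}^{(1)}(\emptyset)$ under the involution $F_{n,S}\mapsto F_{n,[n-1]\setminus S}$ equals Stanley's chromatic symmetric function $X_{P_{n}}(\mathbf{x})$ of the path, and then you quote Gasharov's theorem. Your bijection is sound: compatibility forces each level set of $g$ to be arranged in decreasing order with gaps at least $2$, so the color classes are exactly the independent sets of $P_{n}$, and conversely each proper coloring determines $\pi$ uniquely; I also spot-checked the resulting identity against the paper's Table of Schur expansions for $n\leq4$. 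One small point of care: there is more than one standard involution on QSym extending $\omega$ (one sends $F_{n,S}$ to $F_{n,[n-1]\setminus S}$, another to $F_{n,[n-1]\setminus\{n-i:i\in S\}}$), but any linear involution restricting to $\omega$ on Sym suffices for your deduction of symmetry and Schur positivity, and in any case the reverse-complement symmetry of $\mathfrak{S}_{n}$ (which sends $\Bdes(\pi)$ to $\{n-i:i\in\Bdes(\pi)\}$) makes the two conventions give the same sum here. In terms of trade-offs: your argument is more elementary and self-contained, and it makes the combinatorial reason for symmetry transparent; the paper's route, while terser, yields the finer statement that each graded piece $Q_{n,j}$ is separately Schur positive, which refines Theorem \ref{t-bdessp}. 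Your identity is in fact the $t=1$ specialization of the Shareshian--Wachs expansion of the chromatic quasisymmetric function of $P_{n}$ that the paper cites, so the two proofs meet at the same underlying object.
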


As pointed out to us by Hsin-Chieh Liao, this result is implicit in the work of Shareshian and Wachs \cite{Shareshian2010, Shareshian2016} on Eulerian and chromatic quasisymmetric functions. In short, it follows from \cite[Theorem 1.2]{Shareshian2010} and \cite[Theorem 3.1 and Equation (9.7)]{Shareshian2016} that $Q_{n}^{(1)}(\emptyset) = \sum_{j\geq 0} Q_{n,j}$, where the $Q_{n,j}$ are \textit{Eulerian quasisymmetric functions}, which are known to be Schur positive symmetric functions \cite[Theorem 5.1]{Shareshian2010}, so the sums $\sum_{j\geq 0} Q_{n,j}$ are as well.

We also pose the following conjecture.

\begin{conjecture}
For all $n\geq0$ and $m\geq1$, the quasisymmetric function $Q_{n}^{(1)}(12\dots m)$ is symmetric and Schur positive.
\end{conjecture}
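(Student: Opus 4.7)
The plan is to extend the argument for Theorem~\ref{t-bdessp} to the pattern-avoiding setting. Recall that $Q_n^{(1)}(\emptyset)$ is shown to be Schur positive by identifying it with $\sum_{j \ge 0} Q_{n,j}$, where the $Q_{n,j}$ are the Eulerian quasisymmetric functions of Shareshian and Wachs, which have manifestly Schur positive Frobenius characteristics. My plan is to establish an analogous decomposition
\[
Q_n^{(1)}(12\dots m) = \sum_{j \ge 0} Q_{n,j}^{(m)}
\]
into Schur positive summands, obtained by restricting the Shareshian--Wachs construction to $12\dots m$-avoiders.

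Concretely, $Q_{n,j}$ is defined as a sum of fundamental quasisymmetric functions $F_{n, S(\pi)}$ over permutations $\pi \in \mathfrak{S}_n$ with $\exc(\pi) = j$, where $S(\pi)$ is the Shareshian--Wachs excedance-based descent set. The identity $Q_n^{(1)}(\emptyset) = \sum_j Q_{n,j}$ is, in effect, equivalent to a bijection on $\mathfrak{S}_n$ sending $\Bdes$ to $S$ while tracking $\exc$. I would first investigate whether this bijection, possibly composed with reversal, complementation, or a Foata-type transformation, can be arranged to restrict to $\mathfrak{S}_n(12\dots m)$; the image subset would then define the candidate $Q_{n,j}^{(m)}$ as the Eulerian-type enumerator of that restriction. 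The increasing pattern $12\dots m$ is preserved by many classical permutation bijections, which makes this step plausible.

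The Schur positivity of each $Q_{n,j}^{(m)}$ would then be the main combinatorial task. A promising strategy exploits the fact that $\mathfrak{S}_n(12\dots m)$ is a union of Knuth equivalence classes, namely those whose RSK shape $\lambda$ satisfies $\lambda_1 < m$. If the unrestricted Schur expansion $Q_{n,j} = \sum_\lambda c_{\lambda, j}\, s_\lambda$ admits a refinement in which $c_{\lambda, j}$ enumerates a tableau-type model of shape $\lambda$ (for instance, a banner-tableau model built from the Shareshian--Wachs cycle decomposition), then truncating the refinement to shapes with $\lambda_1 < m$ would yield a Schur positive expansion of $Q_{n,j}^{(m)}$. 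Producing such a shape-graded tableau model, compatible with the bijection used to set up the decomposition, is itself a nontrivial but reasonable program.

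The hard part will be that $\Bdes$ and its excedance counterpart are not Knuth invariants, so the pullback of RSK shape along the bijection constructed above is delicate; in particular, it is not a priori clear that the image subset of $\mathfrak{S}_n$ remains a union of Knuth classes. A fallback route is to use the relaxed $P$-partition framework of Petersen and the third author referenced in the introduction, which was designed precisely to handle $r$-descents: one would seek a labeled poset whose $P$-partition enumerator equals $Q_n^{(1)}(12\dots m)$ and argue Schur positivity via an explicit $S_n$-action or a bijective correspondence with semistandard tableaux. As a test case, the conjecture for $m = 3$ should be tractable using Theorem~\ref{t-123} together with the colored Motzkin path technology developed in its proof, and a successful treatment of that case would likely suggest the correct shape-refined model for general $m$.
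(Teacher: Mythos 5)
This statement is posed as an open conjecture in the paper: the authors give no proof, only computational evidence (the Schur expansions of $Q_{n}^{(1)}(123)$ and $Q_{n}^{(1)}(1234)$ for $n\leq7$ in Tables \ref{tb-schur3}--\ref{tb-schur4}). Your submission is likewise not a proof but a research program, and as written it does not close the conjecture. The central gap is the step where you pass from the unrestricted identity $Q_{n}^{(1)}(\emptyset)=\sum_{j}Q_{n,j}$ to a restricted decomposition $Q_{n}^{(1)}(12\dots m)=\sum_{j}Q_{n,j}^{(m)}$ by ``truncating'' a shape-graded refinement to RSK shapes with $\lambda_{1}<m$. It is true that $\mathfrak{S}_{n}(12\dots m)$ is exactly the union of Knuth classes of shape $\lambda$ with $\lambda_{1}<m$ (Schensted), but the quantity being summed, $F_{n,\Bdes(\pi)}$, is not determined by the recording tableau and is not constant on Knuth classes, so the sum of $F_{n,\Bdes(\pi)}$ over a single Knuth class need not be a Schur function, nor even symmetric. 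Consequently the phrase ``truncating the refinement to shapes with $\lambda_{1}<m$'' has no defined meaning for this statistic: there is no shape-graded Schur expansion of $\sum_{\pi}F_{n,\Bdes(\pi)}$ that is known to restrict along pattern containment. You flag this obstacle yourself, but flagging it does not remove it, and it is fatal to the main route you propose.

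The remaining components are similarly conditional. You do not construct the bijection sending $\Bdes$ to the Shareshian--Wachs statistic, you do not verify that any such bijection (or a Foata-type surrogate) preserves $12\dots m$-avoidance, and the fallback via relaxed $P$-partitions is stated only as a direction. Even the symmetry half of the conjecture --- which must be established before Schur positivity can be discussed --- is not addressed by any unconditional argument in your writeup. To be clear, several of your ingredients are sensible starting points (in particular, the observation that the unrestricted case reduces to the Eulerian quasisymmetric functions matches the paper's Theorem \ref{t-bdessp}, and the suggestion to attack $m=3$ via Theorem \ref{t-123} is reasonable), but none of the conditional steps is carried out, so the conjecture remains open.
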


See Tables \ref{tb-schurempty}\textendash \ref{tb-schur4}  for the Schur expansions of $Q_{n}^{(1)}(\emptyset)$, $Q_{n}^{(1)}(123)$, and $Q_{n}^{(1)}(1234)$ for $n\le7$.

More generally, for any $r\ge0$, one can define
\[
Q_{n}^{(r)}(\Pi)\coloneqq\sum_{\pi\in\mathfrak{S}_{n}(\Pi)}F_{n,\Des_{r}(\pi)}
\]
where $\Des_{r}(\pi)$ is the set of $r$-descents of $\pi$. Then the quasisymmetric functions $Q_{n}(\Pi)$ studied in \cite{Hamaker2020} are $Q_{n}^{(0)}(\Pi)$, and we can ask Question \ref{q-quasisym} more generally for $Q_{n}^{(r)}(\Pi)$. Shareshian and Wachs's work on chromatic quasisymmetric functions \cite{Shareshian2016} may provide a fruitful avenue for studying $Q_{n}^{(r)}(\emptyset)$.

\vspace{20bp}

\noindent \textbf{Acknowledgements.} Most of the work in this paper was done as part of the second author's honors thesis project at Davidson College (under the supervision of the third author); we thank Heather Smith Blake, Hammurabi Mendes, Donna Molinek, and Ana Wright for serving on the second author's thesis committee and offering helpful feedback on his work. We also thank Kyle Petersen for discussions which led to the idea for this project, as well as Hsin-Chieh Liao, Lara Pudwell, and Bruce Sagan for helpful e-mail correspondence. The third author was partially supported by NSF grant DMS-2316181.

\vspace{20bp}

\begin{table}[H]
\begin{centering}
\begin{tabular}{|>{\centering}m{8bp}|>{\raggedright}m{4.5in}|}
\hline 
$n$ & $Q_{n}^{(1)}(\emptyset)$\tabularnewline
\hline 
$0$ & $s_{()}$\tabularnewline
\hline 
$1$ & $s_{(1)}$\tabularnewline
\hline 
$2$ & $2s_{(2)}$\tabularnewline
\hline 
$3$ & $s_{(2,1)}+4s_{(3)}$\tabularnewline
\hline 
$4$ & $2s_{(2,2)}+4s_{(3,1)}+8s_{(4)}$\tabularnewline
\hline 
$5$ & $s_{(2,2,1)}+s_{(3,1,1)}+9s_{(3,2)}+12s_{(4,1)}+16s_{(5)}$\tabularnewline
\hline 
$6$ & $2s_{(2,2,2)}+8s_{(3,2,1)}+12s_{(3,3)}+6s_{(4,1,1)}+30s_{(4,2)}+32s_{(5,1)}+32s_{(6)}$\tabularnewline
\hline 
$7$ & $s_{(2,2,2,1)}+2s_{(3,2,1,1)}+14s_{(3,2,2)}+18s_{(3,3,1)}+s_{(4,1,1,1)}$\\
$\vphantom{a^{a^{a}}}\qquad+38s_{(4,2,1)}+57s_{(4,3)}+24s_{(5,1,1)}+88s_{(5,2)}+80s_{(6,1)}+64s_{(7)}$\tabularnewline
\hline 
\end{tabular}
\par\end{centering}
\caption{\label{tb-schurempty}Schur expansions of $Q_{n}^{(1)}(\emptyset)$
for $0\leq n\leq7$.}
\end{table}

\begin{table}[H]
\begin{centering}
\begin{tabular}{|>{\centering}p{8bp}|>{\raggedright}p{4.5in}|}
\hline 
$n$ & $Q_{n}^{(1)}(123)$\tabularnewline
\hline 
$0$ & $s_{()}$\tabularnewline
\hline 
$1$ & $s_{(1)}$\tabularnewline
\hline 
$2$ & $2s_{(2)}$\tabularnewline
\hline 
$3$ & $s_{(2,1)}+3s_{(3)}$\tabularnewline
\hline 
$4$ & $2s_{(2,2)}+2s_{(3,1)}+4s_{(4)}$\tabularnewline
\hline 
$5$ & $s_{(2,2,1)}+4s_{(3,2)}+3s_{(4,1)}+5s_{(5)}$\tabularnewline
\hline 
$6$ & $2s_{(2,2,2)}+2s_{(3,2,1)}+2s_{(3,3)}+6s_{(4,2)}+4s_{(5,1)}+6s_{(6)}$\tabularnewline
\hline 
$7$ & $s_{(2,2,2,1)}+4s_{(3,2,2)}+s_{(3,3,1)}+3s_{(4,2,1)}+4s_{(4,3)}+8s_{(5,2)}+5s_{(6,1)}+7s_{(7)}$\tabularnewline
\hline 
\end{tabular}
\par\end{centering}
\caption{\label{tb-schur3}Schur expansions of $Q_{n}^{(1)}(123)$ for $0\leq n\leq7$.}
\end{table}

\begin{table}[H]
\begin{centering}
\begin{tabular}{|>{\centering}m{8bp}|>{\raggedright}m{4.5in}|}
\hline 
$n$ & $Q_{n}^{(1)}(1234)$\tabularnewline
\hline 
$0$ & $s_{()}$\tabularnewline
\hline 
$1$ & $s_{(1)}$\tabularnewline
\hline 
$2$ & $2s_{(2)}$\tabularnewline
\hline 
$3$ & $s_{(2,1)}+4s_{(3)}$\tabularnewline
\hline 
$4$ & $2s_{(2,2)}+4s_{(3,1)}+7s_{(4)}$\tabularnewline
\hline 
$5$ & $s_{(2,2,1)}+s_{(3,1,1)}+9s_{(3,2)}+9s_{(4,1)}+11s_{(5)}$\tabularnewline
\hline 
$6$ & $2s_{(2,2,2)}+8s_{(3,2,1)}+12s_{(3,3)}+3s_{(4,1,1)}+21s_{(4,2)}+16s_{(5,1)}+16s_{(6)}$\tabularnewline
\hline 
$7$ & $s_{(2,2,2,1)}+2s_{(3,2,1,1)}+14s_{(3,2,2)}+18s_{(3,3,1)}+21s_{(4,2,1)}$\\
$\vphantom{a^{a^{a}}}\qquad\qquad\qquad\qquad+34s_{(4,3)}+6s_{(5,1,1)}+38s_{(5,2)}+25s_{(6,1)}+22s_{(7)}$\tabularnewline
\hline 
\end{tabular}
\par\end{centering}
\caption{\label{tb-schur4}Schur expansions of $Q_{n}^{(1)}(1234)$ for $0\leq n\leq7$.}
\end{table}

\bibliographystyle{plain}
\addcontentsline{toc}{section}{\refname}\bibliography{bibliography}

\appendix

\section{Appendix: Proof of Lemmas \ref{lem:G}--\ref{lem:FG}} \label{s-B}

The purpose of this appendix is to prove Lemmas \ref{lem:G} and \ref{lem:FG}, which we shall restate below for the reader's convenience.

Let us recall the relevant definitions for Lemma \ref{lem:G}. Given a Dyck path $\nu \in \mathcal{D}_m$, we write $U_i$ for the $i$th $U$ step of $\nu$ and $D_i$ for its $i$th $D$ step. Then, $\con(\nu)$ is the number of $i\in[m-1]$ such that $D_i$ and $D_{i+1}$ are consecutive but $U_i$ and $U_{i+1}$ are not. The generating function $G(s,t,z)$ is given by
$$G(s,t,z)=\sum_{m=0}^\infty \sum_{\nu\in\mathcal{D}_m} s^{\pk(\nu)} t^{\con(\nu)} z^m$$
where $\pk(\nu)$ is the number of peaks of $\nu$ and $\con(\nu)$ is defined as above.

\begin{lemG}
    We have $$G(s,t,z)=\frac{1-(1+s-2t)z-\sqrt{1-2(1+s)z+((1+s)^2-4st)z^2}}{2tz}.$$
\end{lemG}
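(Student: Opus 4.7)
The plan is to reinterpret $\con$ as a local "last-in-run" statistic, bijectively encode Dyck paths by pairs of $0/1$-sequences, translate the problem into the enumeration of weighted lattice paths on $\mathbb{Z}_{\ge 0}$, and then solve a quadratic functional equation.

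First I would observe that $U_i$ and $U_{i+1}$ are consecutive steps of $\nu$ if and only if $U_i$ is not the last step of its maximal $U$-run, and analogously for $D$-runs. Hence
\[
\con(\nu)=|\{i\in[m-1]:U_i\text{ ends its }U\text{-run and }D_i\text{ does not end its }D\text{-run}\}|.
\]
Writing the run decomposition $\nu=U^{a_{1}}D^{b_{1}}\cdots U^{a_{k}}D^{b_{k}}$ and setting $A_{j}=a_{1}+\cdots+a_{j}$, $B_{j}=b_{1}+\cdots+b_{j}$, I would define sequences $(\alpha_{i}),(\beta_{i})\in\{0,1\}^{m}$ by $\alpha_{i}=[i\in\{A_{1},\dots,A_{k}\}]$ and $\beta_{i}=[i\in\{B_{1},\dots,B_{k}\}]$. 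The map $\nu\mapsto(\alpha,\beta)$ is a bijection between $\mathcal{D}_{m}$ and pairs satisfying $\alpha_{m}=\beta_{m}=1$ together with the partial-sum condition $\sum_{i\le r}\alpha_{i}\le\sum_{i\le r}\beta_{i}$ for all $r\in[m]$ (which is equivalent to the Dyck non-negativity $A_{j}\ge B_{j}$). Under this bijection $\pk(\nu)=\sum_{i}\alpha_{i}$ and $\con(\nu)=|\{i:\alpha_{i}=1,\beta_{i}=0\}|$.

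Next, I would interpret $(\alpha,\beta)$ as a sequence of steps on $\mathbb{Z}_{\ge 0}$ tracking the height $h_{r}=\sum_{i\le r}(\beta_{i}-\alpha_{i})$. The four possible step types correspond to the four values of $(\alpha_{i},\beta_{i})$ and carry weights $(0,0)\mapsto z$ (flat), $(0,1)\mapsto z$ (up), $(1,1)\mapsto sz$ (flat), $(1,0)\mapsto stz$ (down), where a down step is permitted only when $h\ge1$. Since $\alpha_{m}=\beta_{m}=1$ for a nonempty Dyck path, its last step is forced to be of type $(1,1)$, so
\[
G=1+sz\,F,
\]
where $F=F(s,t,z)$ is the generating function for all such height paths from $0$ to $0$, with the empty path contributing $1$.

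Finally, a first-return decomposition at height $0$ yields the quadratic
\[
F=1+(1+s)zF+stz^{2}F^{2},
\]
the three terms recording the empty path, a path starting with a flat step (of combined weight $(1+s)z$) followed by an $F$-path, and a path starting with an up step of weight $z$, then an $F$-path at height $\ge1$ (equal to $F$ by translation), then a down step of weight $stz$, then another $F$-path. Solving and choosing the branch with $F(0)=1$ gives
\[
F=\frac{1-(1+s)z-\sqrt{(1-(1+s)z)^{2}-4stz^{2}}}{2stz^{2}},
\]
and substituting into $G=1+szF$ and rewriting the radicand as $1-2(1+s)z+((1+s)^{2}-4st)z^{2}$ yields the claimed formula. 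The main point requiring care is the verification that the bijection of the first step transports $\pk$ and $\con$ as stated; the rest is routine manipulation.
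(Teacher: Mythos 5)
Your proof is correct and is essentially the same as the paper's: your pair $(\alpha_i,\beta_i)$ of run-end indicators encodes exactly the paper's bijection $\psi$ to $2$-Motzkin paths (where $\alpha_i$, resp.\ $\beta_i$, records whether $U_i$, resp.\ $D_{i+1}$, belongs to a peak), and the resulting weighted paths, the identity $G=1+szF$, and the quadratic $F=1+(1+s)zF+stz^2F^2$ coincide with the paper's. The only nitpick is that your stated characterization of the image of $\nu\mapsto(\alpha,\beta)$ should also require $\sum_i\alpha_i=\sum_i\beta_i$ (equivalently, that the height path returns to $0$), a condition you do impose correctly in the generating-function step.
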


Our proof of Lemma \ref{lem:G} will utilize a bijection from Dyck paths to ``$2$-Motzkin paths''. A \textit{$2$-Motzkin path} of length $n$ is a lattice path in $\mathbb{Z}^2$ from $(0,0)$ to $(n,0)$ that never goes below the $x$-axis and consists of up steps $u=(1,1)$, down steps $d=(1,-1)$, and two types of horizontal steps $(1,0)$ denoted by $h_0$ and $h_1$. Let $\MM_{n}$ denote the set of $2$-Motzkin paths of length $n$.

For $m\ge1$, we define $\psi\colon\mathcal{D}_m\to\MM_{m-1}$ in the following way. Take $\nu\in\mathcal{D}_m$, color in red the $U$ and $D$ steps of $\nu$ that belong to peaks, and color the remaining steps in blue.
Note that $D_1$ and $U_m$ are always red. Then $\psi(\nu)$ is the $2$-Motzkin path of length $m-1$ whose $i$th step is
$$
\begin{cases}
u,&\text{if $U_i$ is blue and $D_{i+1}$ is red,}\\
d,&\text{if $U_i$ is red and $D_{i+1}$ is blue},\\
h_0,&\text{if both $U_i$ and $D_{i+1}$ are blue,}\\
h_1,&\text{if both $U_i$ and $D_{i+1}$ are red,}
\end{cases}
$$
for all $i\in[m-1]$. See Figure \ref{f-2Motzbij} for an example.
It can be shown that $\psi$ is a bijection, and it is in fact equivalent to the bijection $\mu$ from \cite[Section 2]{Cheng2013}---specifically, we have $\mu=\psi\circ\chi$, where $\chi$ is the bijection from 321-avoiding permutations to Dyck paths defined in Section \ref{ss-321}.

\begin{figure}
\begin{center}
\begin{tikzpicture}[scale=0.5] 
\draw [line width=0] (4,2); 
\draw[pathcolorlight] (0,0) -- (16,0); 
\drawlinedotsbluept{2,3}{0,1};
\drawlinedotsbluept{5,6,7}{1,2,3};
\drawlinedotsbluept{11,12}{3,2};
\drawlinedotsbluept{14,15,16}{2,1,0};
\drawlinedotsredpt{0,1,2}{0,1,0};
\drawlinedotsredpt{3,4,5}{1,2,1};
\drawlinedotsredpt{7,8,9,10,11}{3,4,3,4,3};
\drawlinedotsredpt{12,13,14}{2,3,2};
\end{tikzpicture}
\begin{tikzpicture}[scale=1]
\draw [line width=0] (0,0); 
\node (0) at (0,1) {$\overset{\psi}{\longmapsto}$};
\end{tikzpicture}
\begin{tikzpicture}[scale=0.75] 
\draw [line width=0] (4,2); 
\draw[pathcolorlight] (0,0) -- (7,0);
\drawlinedots{0,1,2,3,4,5,6,7}{0,0,1,1,1,2,1,0};
\node (1) at (0.5,0.4) {{\footnotesize $h_1$}};
\node (2) at (2.5,1.4) {{\footnotesize $h_1$}};
\node (3) at (3.5,1.4) {{\footnotesize $h_0$}};
\end{tikzpicture}
\end{center}
\vspace{-10bp}
\caption{\label{f-2Motzbij}An example illustrating the bijection $\psi$. In the Dyck path, we color the peaks \textcolor{red}{red} and the other steps \textcolor{blue}{blue}.}
\end{figure}
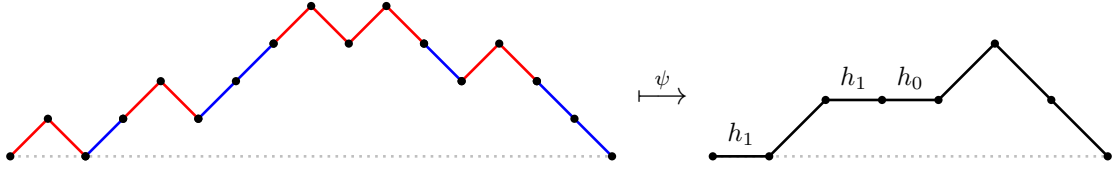

\begin{proof}[Proof of Lemma \ref{lem:G}]
We begin by determining how the Dyck path statistics $\pk$ and $\con$ translate to $2$-Motzkin paths via the bijection $\psi$. Given a $2$-Motzkin path $\alpha$, denote by $u(\alpha)$, $d(\alpha)$, $h_0(\alpha)$, and $h_1(\alpha)$ the number of $u$, $d$, $h_0$, and $h_1$ steps of $\alpha$, respectively. 

Fix a Dyck path $\nu$, and let $\alpha = \psi(\nu)$. First, $\pk(\nu)$ equals the number of red $U$ steps of $\nu$. Since red $U$ steps (except the last one) become $d$ and $h_1$ steps of $\alpha$, we have $\pk(\nu)=d(\alpha)+h_1(\alpha)+1$.
Second, the condition that $D_i$ and $D_{i+1}$ are consecutive, but $U_i$ and $U_{i+1}$ are not, is equivalent to saying that $U_i$ is red and $D_{i+1}$ is blue. It follows that $\con(\nu)=d(\alpha)$. Thus, we have
\begin{equation}\label{eq:tildeG} G(s,t,z)=1+sz\tilde{G}(s,t,z)\end{equation} where
$$\tilde{G}(s,t,z)\coloneqq\sum_{n=0}^\infty \sum_{\alpha\in\MM_n} s^{d(\alpha)+h_1(\alpha)} t^{d(\alpha)} z^n.$$

An equation for $\tilde{G}(s,t,z)$ can be obtained using the standard decomposition of $2$-Motzkin paths: any nonempty $2$-Motzkin path is either of the form $h_1\alpha'$, $h_2\alpha'$, or $u\alpha'd\alpha''$, where $\alpha'$ and $\alpha''$ are arbitrary $2$-Motzkin paths. It follows that  
$$\tilde{G}(s,t,z)=1+(1+s)z\tilde{G}(s,t,z)+tsz^2\tilde{G}(s,t,z)^2.$$
Solving for $\tilde{G}(s,t,z)$ and using \eqref{eq:tildeG}, we obtain the desired formula for $G(s,t,z)$.
\end{proof}

For Lemma \ref{lem:FG}, let us recall the definition of the generating function $F(u,v,w,x)$. As before, we color the peaks of a Dyck path red and its remaining steps blue. Then
$$F(u,v,w,x) = \sum_{n=0}^\infty \sum_{\mu\in\mathcal{D}_n} u^{\hibasc(\mu)} v^{\lobasc(\mu)} w^{\ini_{UU}(\mu)} x^n$$
where
\begin{itemize}
\item $\hibasc(\mu)$ is the number of peaks of $\mu$, other than the first one, that are not immediately preceded by another peak;
\item $\lobasc(\mu)$ is the number of indices $\ell$ such that the $\ell$th and $(\ell+1)$th blue $D$ steps of $\mu$ are consecutive but the $\ell$th and $(\ell+1)$th blue $U$ steps are not; and,
\item $\ini_{UU}(\mu)$ is equal to $1$ if $\mu$ begins with a $UU$-factor, and is equal to $0$ otherwise.
\end{itemize}

\begin{lemFG}
    We have
    $$F(u,v,w,x)=\frac{1}{u}\left(w+\frac{ux}{1-x}\right)\left(G(s(u,v,x),t(u,v,x),z(u,v,x))-1\right)+\frac{1}{1-x}$$
    where 
    \begin{align*}
    a(u,v,x) &\coloneqq 1+\frac{ux}{1-x}\left(1+v+\frac{ux}{1-x}\right), & s(u,v,x) &\coloneqq \frac{\frac{ux}{1-x}\left(1+\frac{ux}{1-x}\right)}{a(u,v,x)},\\
    t(u,v,x) &\coloneqq \frac{\left(1+\frac{ux}{1-x}\right)\left(v+\frac{ux}{1-x}\right)}{a(u,v,x)}, \quad \text{and} &  z(u,v,z) &\coloneqq a(u,v,x)\,x.
    \end{align*}
\end{lemFG}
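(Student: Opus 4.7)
The plan is to prove Lemma \ref{lem:FG} by bijectively encoding each Dyck path $\mu$ (counted in $F$) as a decorated Dyck path $\nu$ (counted in $G$, after the stated substitutions).

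First, I would establish the unique \emph{block decomposition} of each Dyck path $\mu$ into maximal alternating ``red blocks'' $R_i=(UD)^{k_i}$ (consecutive runs of peaks) and ``blue blocks'' $B_j$ (maximal runs of non-peak steps). A direct check from the definitions shows that each non-peak $U$ is immediately followed by another $U$ and each non-peak $D$ is immediately preceded by another $D$, so each blue block has the shape $D^{a_j}U^{b_j}$. Writing $\mu=B_0R_1B_1\cdots R_rB_r$ with $B_0=U^{b_0}$ and $B_r=D^{a_r}$, one reads off $\hibasc(\mu)=r-1$ (for $r\ge 1$), $\ini_{UU}(\mu)=[b_0\ge 1]$, and an explicit combinatorial description of $\lobasc(\mu)$ in terms of the interleaving of blue $D$-run boundaries (inside $B_1,\ldots,B_r$) with blue $U$-run boundaries (inside $B_0,\ldots,B_{r-1}$).

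Next, I would handle the all-red Dyck paths $\mu=(UD)^n$: these correspond to $r\le1$ with $b_0=a_1=0$, and contribute $\sum_{n\ge 0}x^n=\frac{1}{1-x}$ to $F$, matching the first summand in the claimed formula. For the remaining Dyck paths (with at least one blue step), I would construct an explicit bijection with decorated non-empty Dyck paths $\nu$ such that (a) peaks of $\nu$ are in bijection with red blocks of $\mu$, giving $\pk(\nu)=r$; (b) $\con$-events of $\nu$ (as in Lemma~\ref{lem:G}) correspond to positions contributing to $\lobasc(\mu)$; and (c) further decorations record the red block sizes $k_1,\ldots,k_r\ge 1$, the boundary parameters $b_0$ and $a_r$, and the sizes of the remaining blue sub-runs.

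Under the substitutions $s=p(1+p)/a$, $t=(1+p)(v+p)/a$, $z=ax$ (with $p=ux/(1-x)$), the weight $s^{\pk(\nu)}t^{\con(\nu)}z^{|\nu|}$ unfolds into a product of factors that block-by-block match the combinatorial weights of $\mu$: each factor of $p$ encodes a geometric choice of a block size $\ge 1$; each $(1+p)$ records an optional extra blue sub-run; each $(v+p)$ either records a $\lobasc$-event (weighted by $v$) or selects an alternative non-contributing configuration; and each $a$ absorbs a standard blue-block contribution. The prefactor $(w+p)/u$ in the formula then accounts simultaneously for the boundary block $B_0$ (contributing $w$ when $b_0\ge 1$ via $\ini_{UU}$, or $p$ when $b_0=0$ but the path begins with a nontrivial first blue configuration) and for canceling the extra $u$-factor that $s^r$ would otherwise assign to the first red block, which does not contribute to $\hibasc$.

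The main obstacle in carrying out this plan will be the precise definition and verification of the bijection, particularly the translation of $\lobasc$ into $\con$-events of $\nu$. Because $\lobasc$ is a \emph{global} statistic depending on how boundaries between consecutive blue $D$-runs align with boundaries between consecutive blue $U$-runs across the entire path, the bijection must encode this alignment. The form of the substitution for $t$---factored as $(1+p)(v+p)/a$ rather than a single linear factor---reflects exactly this: each $\con$-event of $\nu$ carries two independent binary choices, corresponding to ``is the boundary between two adjacent blue $D$-runs present at this site?'' and ``does it coincide with a boundary between adjacent blue $U$-runs, or not?'', with the $v$-weight picked up precisely in the non-coinciding case. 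I anticipate that the combinatorial core of the argument will consist of setting up these binary choices correctly and verifying, case by case on the local shape of $\mu$, that every block configuration is recovered exactly once from the decorated $\nu$.
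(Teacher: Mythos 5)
Your overall scaffolding is right --- the block decomposition of $\mu$ into maximal runs of peaks (red blocks) and maximal runs of non-peak steps of the form $D^{a}U^{b}$, the identities $\hibasc(\mu)=r-1$ and $\ini_{UU}(\mu)=[b_0\ge 1]$, and the separate treatment of the all-peak paths $(UD)^n$ giving the $\frac{1}{1-x}$ term all match the paper. But the central dictionary you propose in items (a) and (b) is wrong, and the error is detectable by a degree count before any bijection is even constructed. If, as you claim, every $\mu$ with $r$ red blocks mapped to a $\nu$ with $\pk(\nu)=r$, then the contribution of a fixed $\nu$ to the right-hand side, namely $\frac{1}{u}\left(w+\frac{ux}{1-x}\right)s^{\pk(\nu)}t^{\con(\nu)}z^{|\nu|}$, would have to be homogeneous of degree $r-1$ in $u$ (since all such $\mu$ carry weight $u^{\hibasc(\mu)}=u^{r-1}$). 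It is not: writing $P=\frac{ux}{1-x}$, the factor $s^{\pk(\nu)}=\bigl(P(1+P)/a\bigr)^{r}$ alone already produces monomials of $u$-degree up to $2r$ (and the factors $(1+P)(v+P)$ in $t$ and $P(1+v+P)$ inside $a^{|\nu|}$ produce more). So paths $\mu$ with \emph{different} numbers of red blocks must map to the same $\nu$, which rules out $\pk(\nu)=r$.

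The correct correspondence, which is genuinely different from yours, is to let $\nu=\core(\mu)$ be the Dyck path formed by the \emph{blue} steps alone, so that $\mu$ is recovered from $\nu$ by inserting sequences of red peaks at the $2|\nu|+1$ vertices of $\nu$ (insertion being mandatory in the middle of each peak of $\nu$). Then $\pk(\nu)$ is the number of peaks of the blue subpath, not the number of red blocks; the red blocks are exactly the nonempty insertions and are distributed across all the optional $P$-factors in $s$, $t$, $w+P$, and $a$, which is why the $u$-degree varies for fixed $\nu$. Your item (b) fails for the same structural reason: a $\lobasc$-event of $\mu$ need not come from a $\con$-event of $\nu$. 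In the paper's analysis, at a position of $\nu$ where both the $D$'s and the $U$'s are consecutive (neither a peak nor a $\con$ position), inserting a peak between the two $U$'s but not between the two $D$'s creates a brand-new $\lobasc$-event; this is the $\frac{uvx}{1-x}$ term inside $a(u,v,x)$, and your two-binary-choices reading of $t$ cannot account for it. Conversely, a $\con$-event of $\nu$ contributes to $\lobasc(\mu)$ only when no peak is inserted between the two $D$'s, which is the $v$ in the factor $v+\frac{ux}{1-x}$ of $t$. As written, your plan would not close up into a proof without replacing the dictionary entirely.
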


\begin{proof}

    Given $\mu\in\mathcal{D}_n$, we color its steps in red and blue as before, and let $r$ be its number of peaks. Restricting to the blue steps of this path, we obtain a new Dyck path $\nu\in\mathcal{D}_{n-r}$ that we call the {\em core} of $\mu$, and we write $\core(\mu)=\nu$ to indicate this. Conversely, given any Dyck path $\nu$ consisting of blue steps, we can obtain all the Dyck paths $\mu$ for which $\core(\mu)=\nu$ by inserting red peaks $UD$. At any vertex of $\nu$, we can insert any number of peaks, except that this number cannot be zero at vertices that are in the middle of a peak of $\nu$---this is because all peaks in the resulting path $\mu$ must be red.

    Now, fix $\nu\in\mathcal{D}_m$ with $\pk(\nu)=p$ and $\con(\nu)=c$, and suppose that $m\ge1$. We will compute the generating function 
    \[
    F_{\nu}(u,v,w,x)=\sum_{n=0}^{\infty}\sum_{\substack{\mu\in\mathcal{D}_{n}\\
    \core(\mu)=\nu}}u^{\hibasc(\mu)}v^{\lobasc(\mu)}w^{\ini_{UU}(\mu)}x^{n}
    \]
    which represents the contribution of all Dyck paths with core $\nu$ to the full generating function $F(u,v,w,x)$. For $\hibasc(\nu)$, it will be convenient to instead consider $\hibasc(\nu)+1$ so that it counts all peaks not immediately preceded by another peak, including the first peak; we will divide by $u$ at the end to account for this difference.

    Let us analyze the insertion of a sequence of peaks at each of the $2m+1$ vertices of $\nu$. As before, for $i\in[m]$, let $U_i$ and $D_i$ denote the $i$th $U$ step and the $i$th $D$ step of $\nu$, respectively. Let us first consider inserting sequences of peaks right before $U_{i+1}$ and right before $D_{i+1}$, for each $i\in[m-1]$. These indices fall into three categories, each of which affects the generating function $F_{\nu}(u,v,w,x)$ in a different way:
\begin{itemize}
    \item Since $\con(\nu)=c$, there are $c$ values of $i\in[m-1]$ for which $D_i$ and $D_{i+1}$ are consecutive but $U_i$ and $U_{i+1}$ are not. In this case, inserting a sequence of peaks before $U_{i+1}$ contributes $1+\frac{ux}{1-x}$, since the number of peaks of $\mu$ not immediately preceded by a peak goes up by one if at least one peak is inserted. Inserting peaks before $D_{i+1}$ contributes $v+\frac{ux}{1-x}$, where the term $v$ records that $\lobasc(\mu)$ increases by one when no peaks are inserted, since $D_i$ and $D_{i+1}$ remain consecutive blue steps in $\mu$. Therefore, insertions for these $c$ values of $i$ contribute $$\left(1+\frac{ux}{1-x}\right)^c\left(v+\frac{ux}{1-x}\right)^c.$$
    \item Among the remaining $m-1-c$ values of $i\in[m-1]$, there are $p-1$ for which $D_{i+1}$ belongs to a peak of $\nu$ (since $\pk(\nu)=p$ and $D_1$ always belongs to a peak), which is equivalent to saying that $D_i$ and $D_{i+1}$ are not consecutive. 
    Again, inserting peaks before $U_{i+1}$ contributes $1+\frac{ux}{1-x}$, regardless of whether $U_i$ and $U_{i+1}$ are consecutive.
    Inserting peaks before $D_{i+1}$ contributes $\frac{ux}{1-x}$, since we are required to insert at least one peak. Therefore, insertions for these $p-1$ values of $i$ contribute $$\left(1+\frac{ux}{1-x}\right)^{p-1}\left(\frac{ux}{1-x}\right)^{p-1}.$$
    \item For the remaining $m-c-p$ values of $i\in[m-1]$, we have that $D_i$ and $D_{i+1}$ are consecutive, and so are $U_i$ and $U_{i+1}$.
    If no peak is inserted before $D_{i+1}$, then the insertion of a sequence of peaks before $U_{i+1}$ contributes $1+\frac{uvx}{1-x}$; this is because when at least one peak is inserted before $U_{i+1}$, the blue steps $U_i$ and $U_{i+1}$ are no longer consecutive in $\mu$, whereas the blue steps $D_i$ and $D_{i+1}$ are. If one or more peaks are inserted before $D_{i+1}$, these contribute $\frac{ux}{1-x}$, whereas the sequence of peaks inserted before $U_{i+1}$ now contributes $1+\frac{ux}{1-x}$. Therefore, insertions for these $m-c-p$ values of $i$ contribute $$\left(1+\frac{uvx}{1-x}+\frac{ux}{1-x}\left(1+\frac{ux}{1-x}\right)\right)^{m-c-p}.$$
\end{itemize}    

The above cases cover insertions of peaks before right before any step, except for $U_1$ and $D_1$. When inserting peaks before $U_1$, the value of $\ini_{UU}(\mu)$ will be $1$ if no peak is inserted, and $0$ otherwise, whereas the number of peaks not preceded by a peak increases by one if at least one peak is inserted. This produces a contribution of $w+\frac{ux}{1-x}$. Inserting peaks before $D_1$ contributes $\frac{ux}{1-x}$, since $D_1$ belongs to a peak of $\nu$, so at least one new (red) peak must be inserted.
Finally, we can also insert any number of peaks at the very end of $\nu$, which contributes $1+\frac{ux}{1-x}$.

Multiplying all these contributions, and dividing by $u$ so that its exponent does not include the first peak, we have that for each $\nu\in\mathcal{D}_m$ (where $m\ge1$) with $\pk(\nu)=p$ and $\con(\nu)=d$---which contributes $s^pt^cz^m$ to the generating function $G(s,t,z)$---the set of Dyck paths with core $\nu$ contributes
\begin{multline*}
F_\nu(u,v,w,x)=\frac{1}{u}\left(w+\frac{ux}{1-x}\right)\left(1+\frac{ux}{1-x}\right)^c\left(v+\frac{ux}{1-x}\right)^c\\
\left(1+\frac{ux}{1-x}\right)^{p}\left(\frac{ux}{1-x}\right)^{p} \left(1+\frac{ux}{1-x}\left(1+v+\frac{ux}{1-x}\right)\right)^{m-c-p}
x^m
\end{multline*}
to the generating function $F(u,v,w,x)$. Thus, the contribution of all Dyck paths with a nonempty core---i.e., the sum of $F_\nu(u,v,w,x)$ over all nonempty $\nu$---is obtained from $G(s,t,z)-1$ by making the substitutions described above and multiplying by $\frac{1}{u}\left(w+\frac{ux}{1-x}\right)$. Finally, Dyck paths with an empty core are those consisting of a sequence of peaks $UD$, and these contribute $1/(1-x)$ to the generating function $F(u,v,w,x)$.
\end{proof}

\end{document}